\renewcommand{\algorithmiccomment}[1]{\bgroup \hfill //~#1 \egroup}
\DeclareMathAccent{\wtilde}{\mathord}{largesymbols}{"65}
\newtheorem{theorem}{Theorem}[section]
\newtheorem{proposition}{Proposition}[section]
\newtheorem{corollary}{Corollary}[section]
\newtheorem{lemma}[theorem]{Lemma}
\theoremstyle{definition}
\newtheorem{definition}[theorem]{Definition}
\theoremstyle{remark}
\newtheorem{remark}[theorem]{Remark}
\numberwithin{equation}{section}
\renewcommand{\textcolor}[1]{}
\renewcommand{\added}{}
\begin{document}
\DeclarePairedDelimiter\abs{\lvert}{\rvert}
\DeclarePairedDelimiter{\bracket}{ [ }{ ] }
\DeclarePairedDelimiter{\paran}{(}{)}
\DeclarePairedDelimiter{\braces}{\lbrace}{\rbrace}
\DeclarePairedDelimiterX{\gnorm}[3]{\lVert}{\rVert_{#2}^{#3}}{#1}
\DeclarePairedDelimiter{\floor}{\lfloor}{\rfloor}
\DeclarePairedDelimiter{\ceil}{\lceil}{\rceil}
\DeclarePairedDelimiterX{\inprod}[2]{\langle}{\rangle}{#1, #2}
\DeclarePairedDelimiterX{\inprodo}[3]{\langle}{\rangle_{#3}}{#1, #2}
\providecommand{\dx}[2]{\frac{d #1}{d #2}}
\providecommand{\dxx}[2]{\frac{d^2 #1}{d {#2}^2}}
\providecommand{\dxy}[3]{\frac{d^2 #1}{d {#2} d{#3}}}
\providecommand{\dox}[2]{\frac{\partial #1}{\partial #2}}
\providecommand{\doxx}[2]{\frac{\partial^2 #1}{\partial {#2}^2}}
\providecommand{\doxy}[3]{\frac{\partial^2 #1}{\partial {#2} \partial{#3}}}
\providecommand{\at}[3]{\left.#1\right\vert_{#2}^{#3}}
\providecommand{\brr}[1]{{\left(#1\right)}} 
\providecommand{\sbr}[1]{{\ast\left(#1\right)}}
\newcommand\grad{\nabla}
\newcommand{\wt}[1]{{\widetilde{#1}}}
\newcommand{\uwt}[1]{{\underaccent{\wtilde}{#1}}}
\newcommand{\wb}[1]{{\widebar{#1}}}
\newcommand{\wh}[1]{{\widehat{#1}}}
\newcommand{\argmin}{\operatornamewithlimits{argmin}}
\newcommand{\argmax}{\operatorname{argmax}}
\newcommand{\dist}{\operatorname{dist}}
\newcommand{\mleq}[1]{\stackrel{\mathclap{\mbox{\normalfont \scriptsize #1}}}{\leq}}
\newcommand{\tsum}{\textstyle{\sum}}
\newcommand{\bP}{\mathbf{P}}
\newcommand{\Prob}{\mathbb{P}}

\newcommand{\batch}{\boldsymbol{\beta}}
\newcommand{\tw}{\widetilde{w}}

\newcommand{\M}{\widetilde M}
\newcommand{\T}[3]{R_{#1}(#3; #2)}
\newcommand{\DPPP}{DP-Proximal Point}
\newcommand{\gap}{\text{Gap}}
\newcommand{\gen}{\text{gen}}
\newcommand{\opt}{\text{opt}}
\newcommand{\appx}{\text{approx}}
\newcommand{\rsk}{\text{risk}}

\newcommand{\VIgap}[2]{\mbox{Gap}_{\mbox{\footnotesize VI}}(#1,#2)}
\newcommand{\EmpVIgap}[2]{\mbox{EmpGap}_{\mbox{\footnotesize VI}}(#1,#2)}
\newcommand{\wVIgap}[2]{\mbox{WeakGap}_{\mbox{\footnotesize VI}}(#1,#2)}
\newcommand{\SPgap}[2]{\mbox{Gap}_{\mbox{\footnotesize SP}}(#1,#2)}
\newcommand{\EmpSPgap}[2]{\mbox{EmpGap}_{\mbox{\footnotesize SP}}(#1,#2)}
\newcommand{\wSPgap}[2]{\mbox{WeakGap}_{\mbox{\footnotesize SP}}(#1,#2)}
\newcommand{\RSPgap}[3]{\mbox{RGap}_{\mbox{\footnotesize SP}}((#1,#2),#3)}
\newcommand{\GenP}[2]{\mbox{Gen}_{\mbox{\footnotesize P}}(#1;#2)}
\newcommand{\GenD}[2]{\mbox{Gen}_{\mbox{\footnotesize D}}(#1;#2)}
\newcommand{\SPgapP}[2]{\mbox{Gap}_{\mbox{\footnotesize P}}(#1;#2)}
\newcommand{\SPgapD}[2]{\mbox{Gap}_{\mbox{\footnotesize D}}(#1;#2)}

\newcommand{\knseg}{NSEG}
\newcommand{\pgap}{\boldsymbol{\eps}_\gen(1)}
\newcommand{\dgap}{\boldsymbol{\eps}_\gen(2)}
\newcommand{\priva}{\varepsilon}
\newcommand{\privb}{\eta}
\newcommand{\RDPa}{\alpha}
\newcommand{\RDPb}{\beta}
\newcommand{\SVI}{\mbox{SVI}}
\newcommand{\MVI}{\mbox{VI}}

\newcommand{\AbatchEG}{{\cal A}_{\mbox{\tiny batch-EG}}}
\newcommand{\AreplEG}{{\cal A}_{\mbox{\tiny repl-EG}}}
\newcommand{\AnoisyEG}{{\cal A}_{\mbox{\tiny noisy-EG}}}
\newcommand{\AreplNISPP}{{\cal A}_{\mbox{\tiny repl-NISPP}}}
\newcommand{\RR}{\mathbb{R}}
\newcommand{\PP}{\mathbb{P}}
\newcommand{\QQ}{\mathbb{Q}}
\newcommand{\NN}{\mathbb{N}}
\newcommand{\EE}{\mathbb{E}}
\newcommand{\bE}{\mathbb{R}^d}%{\mathbf{E}}
\newcommand{\bB}{\mathbf{B}}
\newcommand{\Z}{\mathcal{Z}}
\newcommand{\W}{\mathcal{W}}
\newcommand{\U}{\mathcal{U}}
\newcommand{\eps}{\varepsilon}
\newcommand{\sphere}{\mathbb{S}}
\newcommand{\diag}{\mbox{diag}}
\newcommand{\bb}{\mathbf{b}}
\newcommand{\bbeta}{\boldsymbol{\beta}}
\newcommand{\bw}{\mathbf{w}}
\newcommand{\bW}{\boldsymbol{W}}
\newcommand{\bS}{\mathbf{S}}
\newcommand{\bR}{\mathbf{R}}
\newcommand{\bU}{\mathbf{U}}
\newcommand{\bJ}{\mathbf{J}}
\newcommand{\bI}{\mathbf{I}}
\newcommand{\bu}{\mathbf{u}}
\newcommand{\bv}{\mathbf{v}}
\newcommand{\bx}{\boldsymbol{x}}
\newcommand{\bz}{\mathbf{z}}
\newcommand{\bg}{\boldsymbol{\xi}}
\newcommand{\br}{\boldsymbol{r}}
\newcommand{\by}{\boldsymbol{y}}
\newcommand{\tr}{\mbox{tr}}
\newcommand{\op}{\mbox{\footnotesize op}}
\newcommand{\Q}{\mathcal{Q}}
\newcommand{\D}{\mathcal{D}}
\newcommand{\K}{\mathcal{K}}
\newcommand{\X}{\mathcal{X}}
\newcommand{\Y}{\mathcal{Y}}
\newcommand{\A}{\mathcal{A}}
\newcommand{\Pcal}{\mathcal{P}}
\newcommand{\prox}{\mbox{prox}}
\newcommand{\proj}{\Pi}
\newcommand{\KM}{Krasnosel'skii-Mann }
\newcommand{\cnote}[1]{ [\textcolor{RoyalBlue}{Crist\'obal: #1}] }
\newcommand{\dnote}[1]{ [\textcolor{Red}{Digvijay: #1}] }
\newcommand{\set}[1]{ \{ #1\} }

\title{\bf %\textcolor{red}{
	Optimal Algorithms for %} 
	Differentially Private  Stochastic \\ Monotone Variational Inequalities and Saddle-Point Problems}

%    Information for first author
\author{ 
Digvijay Boob \thanks{Engineering Management, Information, and Systems, Southern Methodist University, \texttt{dboob@smu.edu}}
\and
Crist\'{o}bal Guzm\'{a}n \thanks{Department of Applied Mathematics, University of Twente. Institute for Mathematical and Computational Eng., Pontificia Universidad Cat\'olica de Chile. \texttt{c.guzman@utwente.nl}}
}

\maketitle

\begin{abstract}
In this work, we conduct the first systematic study of stochastic variational inequality (SVI) and stochastic saddle point (SSP) problems under the constraint of differential privacy (DP). We propose two algorithms: Noisy Stochastic Extragradient (NSEG) and Noisy Inexact Stochastic Proximal Point (NISPP). \added{We show that a stochastic approximation variant of these algorithms attains risk bounds vanishing as a function of the dataset size, with respect to the strong gap function; and a sampling with replacement variant achieves optimal risk bounds with respect to a weak gap function. We also show lower bounds of the same order on weak gap function. Hence, our algorithms are optimal.} 
%We show that sampling with replacement variants of these algorithms attain the optimal risk for DP-SVI and DP-SSP. %, $O\Big( \frac{1}{\sqrt{n}}+\frac{\sqrt{d\ln1/\eta}}{n\varepsilon}\Big)$. 
%\dnote{Avoiding multipass: We show that these algorithms can achieve optimal risk for DP-SVI and DP-SSP problem when underlying data can possibly be used multiple times.}
Key to our analysis is the investigation of algorithmic stability bounds, both of which are new even in the nonprivate case. %, 
%These results are the first of their kind, and they are based on 
%together with a novel ``stability implies generalization'' result for the gap functions for SVI and SSP problems.
%\cnote{I would even omit this last line, but I wouldn't mind keeping it.} 
The dependence of the running time of the sampling with replacement algorithms, with respect to the dataset size $n$, is $n^2$ for NSEG and $\wt{O}(n^{3/2})$ for NISPP. 
\end{abstract}

\section{Introduction} \label{sec:intro}

Stochastic variational inequalities (SVI) and stochastic saddle-point (SSP) problems have become a central part of 
the modern machine learning toolbox. The main motivation behind this line of research is 
the design of algorithms for multiagent systems and adversarial training, which are more suitably modeled by 
the language of games, rather than pure (stochastic) optimization. Applications that rely on these methods 
may often involve the use of sensitive user data, so it becomes important to develop algorithms for these
problems with provable privacy-preserving guarantees. In this context, differential privacy (DP) has become 
the gold standard of privacy-preserving
algorithms, thus a natural question is whether it is possible to design DP algorithms for SVI and SSP that attain high
accuracy.

Motivated by these considerations, this work provides the first systematic study of differentially-private SVI and SSP 
problems. Before proceeding
to the specific results, we present more precisely the problems of interest. The stochastic variational inequality (SVI) 
problem is: given \textcolor{red}{a monotone} %an 
operator $F:\W\mapsto\RR^d$ in expectation form $F(w)=\EE_{\bbeta\sim {\cal P}}[F_{\bbeta}(w)]$, find 
$w^* \in \W$ such that
\begin{equation}\label{main_problem}
\inprod{F(w^*)}{w-w^*} \ge 0 \quad \forall w \in \W. \tag{VI(F)}
\end{equation}
The closely related stochastic saddle point (SSP) problem is: given a \textcolor{red}{convex-concave} real-valued function $f:{\cal W}\mapsto\RR$
(here ${\cal W}={\cal X}\times{\cal Y}$ is a product space),
given in expectation form  $f(x,y) = \EE_{\bbeta\sim\Pcal}[f_{\bbeta}(x, y)]$, the goal is to find $(x^{\ast},y^{\ast})$
that solves
\begin{equation}\label{spp_problem}
	\min_{x \in \X} \max_{y \in \Y}  f(x,y) . \tag{SP(f)}
\end{equation} 
In both of these problems, the input to the algorithm is an i.i.d.~sample $\bS=(\bbeta_1,\ldots,\bbeta_n)\sim {\cal P}^n$. 
Uncertainty introduced by a finite random sample renders the computation of exact solutions infeasible, so gap (a.k.a. population risk) 
functions are used to quantify the quality of solutions. 
\added{Let ${\cal A}:{\cal Z}^n\mapsto{\cal W}$ be an algorithm for SVI problems  \eqref{main_problem}.
	%For SVI \eqref{main_problem}, 
	We define the {\em strong VI-gap} associated with ${\cal A}$ as
%is a widely used accuracy measure for an approximate solution $u$ 
\begin{equation} \label{eqn:Minty_VI}
	\VIgap{{\cal A}}{F}:= \added{\EE_{{\cal A},\bS}}\bracket[\Big]{\sup_{w\in\W}\langle F(w),{\cal A}(\bS)-w \rangle}.
\end{equation}
We also define the {\em weak VI-gap} as
\begin{equation} \label{eqn:weak_VI_gap}
	\wVIgap{{\cal A}}{F} := \EE_{\cal A} \sup_{w \in \W} \EE_{\bS}\bracket*{\inprod{F(w)}{{\cal A}(\bS) -w}}.
\end{equation}
Here, expectation is taken over both the sample data $\bS$ and the internal randomization of ${\cal A}$.
For SSP \eqref{spp_problem}, given an algorithm ${\cal A}:{\cal Z}^n\mapsto {\cal X}\times{\cal Y}$, and letting ${\cal A}(\bS)=(x(\bS),y(\bS))$, a natural gap function is the following {\em saddle-point (a.k.a.~primal-dual) gap} %\dnote{It would be better to differentiate between the problem and the criterion. Here, problem would be solving min-max and criterion of convergence would be to reduce saddle-point gap. This distinction will allow us to discuss other criteria in the literature. Here, our problem seems to reduce the criterion directly. This skips the discussion of underlying problem completely!! Same comment for VI problem.}
\begin{equation} \label{eqn:SP_gap}
\SPgap{{\cal A}}{f}:= \EE_{{\cal A},\bS}\bracket[\Big]{\sup_{x\in\X,\, y\in \Y} [f(x(\bS), y)-f(x,y(\bS))] }.
%:= \EE_{{\cal A},\bS}\bracket[\Big]{\sup_{x\in\X,\, y\in \Y} [f(u_x, y)-f(x,u_y)] }.  %\tag{$\mbox{SP}(F)$}
\end{equation}
%Similar to weak VI-gap for SVI,  we define a weaker version of the gap function for SSP,}
Analogously as above, we define the {\em weak SSP gap} as}\footnote{The denominations of weak and strong gap functions used in this paper are not standard, but we believe are the most appropriate in this context. For example, in \cite{Zhang:2020} used the terms {\em weak and strong generalization measure} for \eqref{eqn:weak_SP_gap} and \eqref{eqn:SP_gap} respectively, but it is clear that these quantities do not refer to standard generalization measures used in stochastic optimization.} 
\added{
\begin{equation} \label{eqn:weak_SP_gap}
	\wSPgap{{\cal A}}{f}:= \EE_{\cal A} \sup_{x\in\X,\, y\in \Y} \EE_{\bS}[f(x(\bS), y)-f(x,y(\bS))] .  %\tag{$\mbox{SP}(F)$}
\end{equation}
}
It is easy to see that in both cases the gap is always nonnegative, and any exact solution must have zero-gap. For examples and applications of SVI and SSP we refer to Section \ref{sec:ex_app_SVI_SSP}.
\added{Despite the fact that the strong VI is a more classical and well-studied quantity, the weak VI gap has been observed to be useful in various contexts. We refer the reader to \cite{Zhang:2020} for more discussions on the weak VI gap.}

On the other hand, we are interested in designing algorithms that are {\em differentially private}. These algorithms build a solution
based on a given dataset $\bS$ of random i.i.d.~examples from the target distribution, and output a (randomized) feasible solution, ${\cal A}(\bS)$. 
We say that two datasets $\bS=(\bbeta_i)_i,\bS^{\prime}=(\bbeta_i^{\prime})_i$ are neighbors, denoted $\bS\simeq \bS^{\prime}$, if they only differ in a single entry $i$. We say that an algorithm ${\cal A}(\bS)$ is $(\priva,\privb)$-differentially private if for every event $E$ in the output 
space\footnote{Note that the probabilities in the definition of DP only involve the probability space of algorithmic randomization, and not of the datasets, which is emphasized by the notation $\PP_{\cal A}$. The datasets must be neighbors, but they are 
otherwise arbitrary, and this is crucial to certify the privacy for any user.}
\begin{equation} \label{eqn:DP} 
\PP_{\cal A}[{\cal A}(\bS)\in E] \leq e^{\priva}\PP_{\cal A}[{\cal A}(\bS^{\prime})\in E] + \privb
\quad(\forall \bS\simeq \bS^{\prime}).
\end{equation}
Here $\priva,\privb\geq 0$ are prescribed parameters that quantify the privacy guarantee. Designing DP algorithms for 
particular data analysis problems is an active area of research. Optimal risk algorithms 
for stochastic convex optimization have only very recently been developed, and it is unclear whether these methods
are extendable to SVI and SSP settings.

\subsection{Summary of Contributions}
Our work is the first to provide population risk bounds for DP-SVI and DP-SSP problems. Moreover, our algorithms attain
provably optimal rates and are computationally efficient. We summarize our contributions as follows:

\begin{enumerate}
\item We provide two different algorithms for DP-SVI and DP-SSP: namely, the noisy stochastic extragradient method
(\knseg) and a noisy inexact stochastic proximal-point method (NISPP). The \knseg\, method is a natural DP variant 
of the well-known stochastic extragradient method \cite{Juditsky:2011}, where privacy is obtained by Gaussian noise 
addition; on the other hand, the NISPP method is an approximate proximal point algorithm  
\cite{Rockafellar:1976,Jalilzadeh:2019} in which every proximal iterate is made noisy to make it differentially private. Our more basic variants of both of these methods are based
on \added{iterations involving disjoint sets of datapoints (a.k.a.~single pass method)}, which are known to typically lead to highly suboptimal rates in DP (see the Related Work
Section for further discussion). %This motivates our second contribution.
%\item \deleted{We provide novel generalization bounds of {\em uniformly stable algorithms} for SVI and SSP, particularly for 
%algorithms that make multiple passes over the data. Uniform argument stability is a sensitivity bound of the algorithm output w.r.t.~arbitrary
%changes of a single data point (see a formal definition in Section \ref{sec:algorithmic_stab}), and has been successfully used to establish 
%generalization bounds in learning theory. 
%%As opposed to previous approaches for SSP \cite{Zhang:2020}, we do not require strong
%%monotonicity of the underlying operators. 
%Our proof is an adaptation from the classical stability implies generalization result
%in learning theory \cite{Bousquet:2002}, however it \textcolor{red}{becomes substantially more involved} 
%%requires additional work 
%due to the supremum used in the definition of the
%risk for SVI/SSP (see eqns. \eqref{eqn:Minty_VI} and~\eqref{eqn:SP_gap} above).
%}
\item We derive novel uniform stability bounds for the \knseg\, and NISSP methods.
For \knseg, our stability upper bounds are inspired by the interpretation of the extragradient method as a (second order) 
approximation of the proximal point algorithm. In particular, we provide expansion bounds for the extragradient iterates, and 
solve a (stochastic) linear recursion. The stability bounds for NISPP method are based on stability of the (unique) SVI solution
in the  strongly monotone case. Finally, we investigate the risk attained by multipass versions of the \knseg\, and NISPP methods, leveraging \added{known generalization bounds for stable algorithms \cite{Lei2021stability}.}
%the aforementioned stability and generalization bounds. 
Here, we show that 
the optimal risk for DP-SVI and DP-SSP can be attained by running these 
algorithms with their sampling with replacement variant. In particular, \knseg\ method requires $n^2$ stochastic operator evaluations, and NISPP method requires much smaller $\wt{O}(n^{3/2})$ operator evaluations for both DP-SVI and DP-SSP problems. In particular, these upper bounds also show the dependence of the running time of each of these algorithms w.r.t.~the dataset size.
\item \added{Finally, we prove lower bounds on the weak gap function for any DP-SSP and DP-SVI algorithm, showing the risk optimality of the aforementioned multipass algorithms. The main challenge in these lower bounds is showing that existing constructions of lower bounds for DP convex optimization \cite{Bassily:2014,Steinke:2016,Bassily:2019} lead to lower bounds on the weak gap of a related SP/VI problem.}
%\dnote{I think rather than talking about running time, we should say number of stochastic operator evaluations since that is the right metric of complexity} \cnote{It is important to emphasize that the dependence on $n$ for the algorithm running time is what's described above.}
\end{enumerate}
\textcolor{red}{
The following table provides details of population risk and operator evaluation complexity .
\begin{table}[H]
	\centering
	\begin{tabular}{|c|c|c|c|c|}\hline
		&\multicolumn{2}{c|}{Type of sampling} &\multicolumn{2}{c|}{Type of sampling}\\\hline
		&single pass &multipass&single pass &multipass\\\hline
		Method&Criterion - Strong Gap &Criterion - Weak Gap &\multicolumn{2}{c|}{Number of operator evaluations}\\\hline
		NSEG &$O\Big(\frac{d^{1/4}}{\sqrt{n\varepsilon}}+ \frac{\sqrt{d}}{n\varepsilon} \Big)$ &$O\Big(\frac{1}{\sqrt{n}} + \frac{\sqrt{d}}{n\varepsilon} \Big)$ &$n$ & $n^2$\\[1.5mm]\hline 
		NISPP (OE subroutine) &$O\Big(\frac{1}{n^{1/3}}+ \frac{\sqrt{d}}{ n^{2/3}\varepsilon}\Big)$ &$O\Big(\frac{1}{\sqrt{n}} + \frac{\sqrt{d}}{n\varepsilon}\Big)$ &$O(n\log{n})$ &$O(n^{3/2}\log{n})$\\\hline
	\end{tabular}
	\caption{Different levels of risk and complexity achieved  by NSEG and NISPP methods for $(\varepsilon,\eta)$-differentially private SVI/SSP. Here $n$ is the dataset size, and $d$ is the dimension of the solution search space. We omit the dependence on other problem parameters (e.g., Lipschitz constants and diameter), as well as the privacy parameter $\eta$.}
\end{table}
}

\subsection{Related Work}

We divide our discussion on related work in three main areas. Each of these areas has been extensively investigated, so 
a thorough description of existing work is not possible. We focus ourselves on the work which is more directly related to our own.

\begin{enumerate}
\item {\bf Stochastic Variational Inequalities and Saddle-Point Problems:} Variational inequalities and saddle-point problems are classical topics in applied mathematics, operations research and engineering (e.g., \cite{vonNeumann:1928,Sion:1958,Rockafellar:1976,Korpelevich:1976,Nemirovsky:1983,Facchinei:2007,Nemirovski:2004,Auslender:2005,Nesterov:2005}). Their stochastic 
counterparts have only gained traction recently, mainly motivated by their applications in machine learning 
(e.g.,\cite{Juditsky:2011,Juditsky:2012,Iusem:2019,Hsieh:2019,Kotsalis:2020} and references therein).
%\cite{Nedic:2009aa, Nesterov:2009aa, Juditsky:2012, Nemirovski:2004, Kolossoski:2017} \cnote{Only one of this papers addresses the stochastic case. Can we have to sentences, one concerned with deterministic VIs, that includes these papers, and other on stochastic?}. 
For the stochastic version of \eqref{spp_problem}, \cite{Nemirovski:2009} proposed a robust stochastic approximation method. 
%. The earlier mentioned stochastic (accelerated) mirror-prox method for \eqref{main_problem} can also provide similar convergence guarantees for stochastic \eqref{spp_problem} problem \cite{Nemirovski:2010} (\cite{Chen:2017}). Very recently, \cite{Zhao:2020} proposed an optimal algorithm for smooth stochastic \eqref{spp_problem}. Overall, these algorithms require $O(\sigma^2/\eps^2)$ number of stochastic gradient evaluations where $\sigma^2$ is the variance associated with unbiased stochastic gradients. Here, the usual convergence criterion is the so called ``primal-dual'' SP gap, i.e., $\SPgap{\wh{x}}{\wh{y}}{f} = \max_{[x, y] \in \X \times \Y} f(\wh{x}, y) - f(x, \wh{y})$.
The first optimal algorithm for SVI with monotone Lipschitz operators was obtained by Juditsky, Nemirovski and Tauvel 
\cite{Juditsky:2011}, and very recently Kotsalis, Lan and Li \cite{Kotsalis:2020} developed optimal variants for the strongly monotone case
(in terms of distance to the optimum criterion, rather than VI gap).

It is important to note that naive adaptation of these methods to the DP setting requires adding noise to the operator 
evaluations at every iteration, which substantially degrades the accuracy of the obtained solution. A careful privacy accounting
and minibatch schedule can lead to optimal guarantees for single-pass methods \cite{Feldman:2020}, however
this requires accuracy guarantees for the last iterate, which is currently an open problem for SVI and SSP 
(aside from specific cases, typically involving strong monotonicity conditions, e.g., \cite{Hsieh:2019,Kotsalis:2020}).
%which prevents the application of privacy amplification by iteration techniques \cite{Feldman:FOCS2018}. 
We circumvent this problem by providing population risk guarantees for {\em multipass methods}.

\item {\bf Stability and Generalization:} Deriving generalization (or population risk) bounds for general-purpose algorithms is 
a challenging task, actively studied in theoretical machine learning. Bousquet and Elisseeff \cite{Bousquet:2002} provided a 
systematic treatment of this question for algorithms which are {\em stable}, with respect to changes of a single element
in the training dataset, and a sequence of works have refined these generalization guarantees (see \cite{FeldmanVondrak:2019,Bousquet:2020} and references therein). This idea has been applied to investigate the generalization properties of regularized empirical risk
minimization \cite{Bousquet:2002,SSSSS:2010}, and more recently to iterative methods, such as stochastic gradient descent
\cite{Hardt:2016,BFGT:2020}.

Using stability to obtain population risk bounds in SVI and SSP is substantially more challenging, due to the presence of a 
supremum in the accuracy measure (see eqns.~\eqref{eqn:Minty_VI} and \eqref{eqn:SP_gap}). Recently, Zhang et al. 
\cite{Zhang:2020}, established stability implies generalization results for the strong SP gap 
\added{under strong monotonicity assumptions. %, and for the monotone Lipschitz case they provide generalization bounds for the weak SP gap function. 
Their proof strategy applies analogously to address the SVI setting,
although this is not carried out in their work. More recently, Lei et al.~\cite{Lei2021stability}, proved generalization bounds on the weak
SP gap without strong monotonicity assumptions. We leverage this result for our
algorithms, and further elaborate on its implications for SVI in Section \ref{sec:algorithmic_stab}.
%Similar results are also provided in \cite{Lei2021stability}. 
%Farnia et al. \cite{Farnia:2020} also discussed generalization bounds for SP gap function without strong convexity. However, their results are not rigorous enough
}

%Q: Should we include mention to Farnia-Ozdaglar, or the folks we have been interacting with?

%\deleted{with
%Lipschitz and strongly monotone operators. By comparison, our results do not require strong monotonicity, and 
%remove some additional spurious factors in their generalization bounds (see the discussion after Thm.~1 of that paper). 
%%we work with the standard gap functions, as opposed to the weaker generalization measures proposed in their work
%%do not require
%%the substantially weaker notions of gap proposed in that work. 
%Hence, our population risk bounds provide direct improvements
%on the generalization guarantees from \textcolor{red}{their paper}. %\cite{Zhang:2020generalizationSSPP}. 
%On the other hand,
%stability bounds for non-monotone SSP problems were studied for the (full-batch) proximal point algorithm and 
%(stochastic and full-batch) gradient descent-ascent \cite{Farnia:2020}. 
%This work also discusses generalization guarantee for SSP problem. However, their current draft seems to have 
%an error (see Remark \ref{rem:comparison_stab_SSP_paper} for details). %associated with handling max term (see proof of Theorem 1 in their paper). 
%Our analysis of ``stability implies generalization'' provides a novel and simple way to handle the max term and hence provides the strongest known generalization guarantee for SSP problem and first guarantee for SVI problem.
%}

\item {\bf Differential Privacy in (Stochastic) Convex Optimization:} Differentially private empirical risk minimization and stochastic convex
optimization have been extensively studied for over a decade (see, e.g.~\cite{CM08,CMS,jain2012differentially,kifer2012private,Bassily:2014,ullman2015private,JTOpt13,Bassily:2019,Feldman:2020}). Relevant to our work are the first optimal risk algorithms for 
DP-ERM \cite{Bassily:2014} and DP-SCO \cite{Bassily:2019}. To the best of our knowledge, our work is the first to address
DP algorithms for SVI and SSP. \added{Our approach for generalization of multipass algorithms is inspired by the noisy SGD analysis in \cite{BFGT:2020}. However, 
our stability analysis differs crucially from \cite{BFGT:2020}: in the case of NSEG, we need to carefully address the double operator evaluation of the extragradient step, which is done by using the fact that the extragradient operator is approximately nonexpansive. In the case of NISPP, we leverage the contraction properties of strongly monotone VI solutions. By contrast, SGD in the nonsmooth case is far from nonexpansive \cite{BFGT:2020}.}
Alternative approaches to obtain  optimal risk in DP-SCO, including privacy amplification by iteration \cite{Feldman:2020}, and
phased regularization or phased SGD \cite{Feldman:2020}, appear to run into fundamental limitations when applied to DP-SVI 
and DP-SSP. It is an interesting future research direction to obtain faster running times with optimal population risk in DP-SVI and 
DP-SSP, which
may benefit from these alternative approaches.

\end{enumerate}

The main body of this paper is organized as follows. In Section \ref{sec:notation}, we provide the necessary background information on 
SVI/SSP, uniform stability, and differential privacy, which are necessary for the rest of the paper. In Section \ref{sec:nseg} we introduce the 
\knseg\, method, together with its basic privacy and accuracy guarantee for a single pass version.  %Then, in order to study the multipass version of this method, we first provide ``stability implies generalization'' results for SVI and SSP problems in Section \ref{sec:stab_implies_generalization}. 
Section \ref{sec:stab_nseg} provides stability bounds for NSEG method along with the consequent optimal rates for SVI and SSP. In Section \ref{sec:prox_DPSVI}, we introduce the single-pass differentially private NISPP method with bound on expected SVI-gap. Section \ref{sec:stab_risk_NISPP} presents stability analysis of NISPP, together with the resulting optimal rates for 
SVI/SSP gap. We conclude in Section \ref{sec:LowerBounds} with lower bounds that prove the optimality of the obtained rates.

\section{Notation and Preliminaries} \label{sec:notation}
We work on the Euclidean space $(\RR^d, \inprod{\cdot}{\cdot})$, where $\inprod{\cdot}{\cdot}$ is
%$\gnorm{\cdot}{}{}$ denotes Euclidean norm and $\inprod{\cdot}{\cdot}$ denotes 
the standard inner product, and $\|u\|=\sqrt{\langle u,u\rangle}$ is the $\ell_2$-norm. Throughout, we consider a compact convex set 
$\W\subseteq\RR^d$ with diameter $D>0$.  We denote the standard Euclidean projection operator on set $\W$ by $\proj_{\W}(\cdot)$. 
The identity matrix on $\RR^d$ is denoted by $\mathbb{I}_d$.% \cnote{Do we ever use this notation?} 

We let $\mathcal{P}$ denote an unknown distribution supported on an arbitrary set $\Z$, from which we have access to exactly $n$ i.i.d.~datapoints which we denote by sample set $\bS\sim {\cal P}^n$. Throughout, we will use boldface characters to denote sources of randomness (coming from the data, or internal algorithmic randomization).  
We say that two datasets $\bS, \bS'$ are adjacent (or neighbors), denoted by $\bS \simeq \bS'$, if they differ in a single data point. We also denote subsets (a.k.a. batches), or single 
data points,
%a randomly sampled batch (or a single datapoint) 
of $\bS$ or $\Pcal$ by $\bB$ and $\bbeta$, respectively. Whether $\bbeta$ or $\bB$ is sampled from $\Pcal$ or $\bS$ is specified explicitly unless it is clear from the context. For a batch $\bB$, we denote its size by $\abs{\bB}$. Therefore, we have $\abs{\bS} = n$. Throughout, we will denote Gaussian random variables by $\boldsymbol{\xi}$. 

We say that $F:\W \to \RR^d$ is a monotone operator if 
\[\inprod{F(w_1) -F(w_2)}{w_1-w_2} \ge 0, \quad \forall w_1, w_2 \in \W.\]
Given $L>0$, we say that $F$ is  $L$-Lipschitz continuous, if
\[\gnorm{F(w_1) - F(w_2)}{}{} \le L\gnorm{w_1-w_2}{}{}, \quad \forall w_1, w_2 \in \W.\]
Finally, we say that $F$ is $M$-bounded if $\sup_{w\in \W}\|F(w)\|\leq M$. We denote the
set of monotone, $L$-Lipschitz and $M$-bounded operators by ${\cal M}_{\cal W}^1(M,L)$. 
In this work, we will focus on the case where $F$ is an expectation operator, i.e., $F(w) := \EE_{\bbeta\sim \Pcal} [F_{\bbeta}(w)]$, where $\Pcal$ is an arbitrary distribution supported on $\Z$, \textcolor{red}{%$F(\cdot; \bbeta)$ is continuous} and  
 %$\bbeta$ is random data input in 
and for any $\bbeta$ in ${\cal Z}$, $F_{\bbeta}(\cdot)\in {\cal M}_{\cal W}^1(M,L)$}, $\bbeta$-a.s.\footnote{
\added{Here, we mean that for almost every $\bbeta$, we have $F_{\bbeta}\in {\cal M}_{\cal W}^1(M,L)$.} 
}
%For sake of simplicity of the presentation, we assume $F(\cdot; \bbeta)\in {\cal M}_{\cal W}^1(M,L)$ for all $\bbeta$.
% to Here and onwards, $\bbeta$-a.s.~denotes that the property holds almost surely w.r.t.~the law of $\bbeta$.

%from an unknown and arbitrary distribution $\Pcal$ that satisfies the following:
%\begin{itemize}
%	\item $F(\cdot; \bbeta)$ is $L$-Lipschitz continuous $\bbeta$-a.s., and
%	\item $\gnorm{F(\cdot; \bbeta)}{}{} \le M$ $\bbeta$-a.s..
%\end{itemize}

In the stochastic saddle point problem \eqref{spp_problem}, we modify the notation slightly. Here, ${\cal X}\subseteq\RR^{d_1}$ and ${\cal Y}\subseteq\RR^{d_2}$ are compact convex sets, and we will assume that the saddle point
functions $f_{\bbeta}(\cdot,\cdot):{\cal X}\times{\cal Y}\mapsto\RR$, satisfy the following conditions $\bbeta$-a.s. %\textcolor{red}{for all $\bbeta\in {\cal Z}$}:
\begin{itemize}
	\item $\nabla_x f_{\bbeta}(\cdot,\cdot)$ is $L_x$-Lipschitz continuous, and $\nabla_y f_{\bbeta}(\cdot,\cdot)$ is $L_y$-Lipschitz continuous, and;
	\item $f_{\bbeta}(\cdot,y)$ is convex, for any given $y\in \Y$, and $f_{\bbeta}(x,\cdot)$ is concave, for any given $x\in \X$ (we will say in this case the function is convex-concave).
\end{itemize}
If the assumptions above are met, we will denote $L\triangleq \sqrt{L_x^2 + L_y^2}$.  %\dnote{Is this correct? Should it be $L = \sqrt{L_x^2 + L_y^2}$} \cnote{Right. Thanks!}{
Under the assumptions above, it is well-known that SSP \cite{vonNeumann:1928,Sion:1958} (and 
SVI \cite{Facchinei:2007}, respectively) have a solution.

In the case of saddle-point problems, given the convex-concave function $f_{\bbeta}(\cdot,\cdot):{\cal X}\times{\cal Y}\mapsto\RR $, it is well-known that the operator $F:{\cal X}\times{\cal Y}\mapsto \RR^d\times\RR^d$ below is monotone
\begin{equation} \label{eqn:monotone_op_SP}
F_{\bbeta}(x,y) = (\nabla_x f_{\bbeta}(x,y), -\nabla_y f(x,y)).
\end{equation}
We will call this operator the {\em monotone operator associated with} $f_{\bbeta}(\cdot,\cdot)$. 
Furthermore, if $\nabla_x f_{\bbeta}(\cdot,y)$ has $L_x$-Lipschitz continuous gradient and $\nabla_y f_{\bbeta}(x,\cdot)$ has $L_y$-Lipschitz continuous gradient, then $F$ is $\sqrt{L_x^2+L_y^2}$-Lipschitz continuous. 

It is easy to see that, given a SSP problem with function $f_{\bbeta}(\cdot,\cdot)$ and sets ${\cal X}$, ${\cal Y}$, an (exact) SVI solution \eqref{main_problem} for the monotone operator associated to 
$f(x,y)=\EE_{\bbeta}[f_{\bbeta}(x,y)]$ over the set $\W={\cal X}\times{\cal Y}$, yields an exact SSP solution for the starting problem. Unfortunately, such reduction does not directly work for approximate solutions to \eqref{eqn:Minty_VI} and \eqref{eqn:SP_gap}, so the analysis must be done separately for both problems.
%It is well-known that in the context above, there exists a pair $(x^{\ast}, y^{\ast})\in\X\times\Y$ that achieves zero gap
%in \eqref{eqn:SP_gap} \cite{vonNeumann:1928, Sion1958} . 
%Saddle point problems have associated primal and dual 
%optimization problems:
%\begin{eqnarray}
%(P)~~~\overline{f}^{\ast}=\min_{x\in \X} [\overline{f}(x) \triangleq \max_{y\in \Y} f(x,y)]  \label{eqn:primal_SP} \\
%(D)~~~\underline{f}^{\ast}=\max_{y\in \Y}  [\underline{f}(y) \triangleq \min_{x\in \X} f(x,y)]. \label{eqn:dual_SP}
%\end{eqnarray}
%Moreover, $(x^{\ast},y^{\ast})\in \X\times\Y$ is a zero saddle-point gap solution iff $x^{\ast}$ solves $(P)$ and 
%$y^{\ast}$ solves $(D)$; and the values of the primal and the dual coincide, $\overline{f}^{\ast} =\underline{f}^{\ast}$. Hence, our goal is to compute a pair $(x,y)$ with the smallest possible saddle-point gap, while the algorithm uses $\bS \sim \Pcal^n$ with the additional constraint that the algorithm must be differentially private. Now, we are ready discuss our contributions.
 
For batch $\bB$, we denote the empirical (a.k.a.~sample average) operator 
$F_{\bB}(w) := \frac{1}{\abs{\bB}} \sum_{\bbeta \in \bB} F_{\bbeta}(w).$ 
On the other hand, for a batch $\bB$, the empirical saddle point function is denoted as 
$f_{\bB}(x, y) = \frac{1}{\abs{\bB}} \sum_{\bbeta \in \bB} f_{\bbeta}(x, y).$
\added{Given a distribution ${\cal P}$, the expectation operator and function are denoted by $F_{\cal P}(w):=\mathbb{E}_{\bbeta\sim\cal P}[F_{\bbeta}(w)]$, and $f_{\cal P}(x,y)=\EE_{\bbeta\sim {\cal P}}[f_{\bbeta}(x,y)]$, respectively. 
For brevity, whenever it is clear from context we will drop the dependence on ${\cal P}$.}\\

\subsection{Examples and Applications of SVI and SSP} \label{sec:ex_app_SVI_SSP}
An interesting problem which can be formulated as a SSP-problem is the minimization of a max-type convex function:
\textcolor{red}{\[\min_{x \in \X} \Big\{\phi(x) :=\max_{1 \le j \le m} \phi_j(x)\Big\},\]}
where $\phi_j: \X \to \RR$ is a stochastic convex function $\phi_j(x) := \EE_{\boldsymbol{\zeta}_j\sim\Pcal_j}[\phi_{j,\boldsymbol{\zeta}_j}(x)]$ for all $j \in [m]$. This problem is essentially a structured nonsmooth optimization problem which can be reformulated into a convex-concave saddle point problem: %\cnote{Need to relate $\boldsymbol{\zeta}_i$ with $\bbeta_i$ explicitly}
\[\min_{x \in \X} \max_{y \in \Delta_m} \EE_{\boldsymbol{\zeta}_1\dots\boldsymbol{\zeta}_m} [\tsum_{j=1}^m y_i\phi_{j,\boldsymbol{\zeta}_j}(x)]\]
Here, $\bbeta = (\boldsymbol{\zeta}_j)_{j=1}^m$ is the random input to the saddle point problem: $f_{\bbeta}(x,y) = \sum_{j = 1}^my_j\phi_{j,\boldsymbol{\zeta}_j}(x)$. Note that a substantial generalization of the max-type problem above is the so called compositional optimization problem:
\[\min_{x \in \X}\phi(x) := \Phi(\phi_1(x), \dots, \phi_m(x)),\]
where $\phi_j(x)$ are convex maps and $\Phi(u_1, \dots, u_m)$ is a real-valued convex function whose Fenchel-type representation 
is assumed to have the form
\[\Phi(u_1, \dots, u_m) = \max_{y \in \Y} \tsum_{j=1}^m \inprod{u_j}{A_jy+ b_j} - \Phi_{*}(y),\]
where $\Phi_{\ast}$ is a convex, Lipschitz and smooth.
Then, overall optimization problem can be reformulated as a convex-concave saddle point problem:
\[\min_{x \in \X} \max_{y \in \Y} \tsum_{j=1}^m \inprod{\phi_j(x)}{A_jy+ b_j} - \Phi_{\ast}(y),\]
where stochasticity is introduced due to constituent functions $\phi_j(x) = \EE_{\boldsymbol{\zeta}_j} [\phi_{j,\boldsymbol{\zeta}_j}(x)].$ 

To conclude, we remark that these type of models have been recently proposed in machine learning to address {\em approximate
fairness} \cite{Williamson:2019} and federated learning on heterogeneous populations \cite{Li:2020}. In these examples, the different 
indices $j\in [m]$ may denote different subgroups from a population, and we are interested in bounding the (excess) population risk
on these subgroups uniformly (with the motivation of preventing discrimination against any subgroup). This clearly cannot be achieved 
by a stochastic convex program, and a stochastic saddle-point
formulation is effective in certifying accuracy across the different subgroups separately. 

For further examples and applications of stochastic variational inequalities and saddle-point problems, we refer the reader to \cite{Juditsky:2011,Juditsky:2012,Zhang:2020}.

%There is a relaxed version of $\text{Gap}_{\text{SP}}$ which is defined as 
%\begin{equation*}
%	\RSPgap{x}{y}{f} := f(x,y^*) - f(x^*,y)
%\end{equation*}
%It is not difficult to see that $\RSPgap{x}{y}{f} \ge 0$ for all $x,y$ and $(x^*, y^*)$ also achieves zero $\text{RGap}_{\text{SP}}$. Moreover, if  $f(\cdot, y^*)$ and $f(x^*, \cdot)$ are strictly monotone then $\RSPgap{x}{y}{f} = 0$ iff $(x,y) \in \X\times \Y$ is a saddle point solution. 

%\dnote{Is this notation (P) and (D) required anymore?}
%\cnote{Missing background on uniform stability}
%One of the key ingredient in obtaining optimal bounds on SP-gap or VI-gap is the ability of the solution to generalize to the out-of-sample data, referred to as {\em generalization}. There exists a rich body of literature on the generalization theory for stochastic convex optimization (SCO) by solving sample average problem \cite{Blah} \dnote{May be Cristobal can fill these citations? In particular, we should discuss Section 5.1.4 of stochastic programming book by Shapiro. He has something on minimax problems there.}. 

\subsection{Algorithmic Stability} \label{sec:algorithmic_stab}

In general, an algorithm is a randomized function mapping datasets to candidate solutions, ${\cal A}:\Z^n\mapsto\RR^d$, \textcolor{red}{which is measurable w.r.t.~the dataset}. 
Two datasets, $\bS=(\bbeta_1,\ldots,\bbeta_n),\,\bS^{\prime}=(\bbeta_1^{\prime},\ldots,\bbeta_n^{\prime})\in\Z^n$ are said to be neighbors (denoted $\bS\simeq\bS^{\prime}$) if they only differ in at most one data point, namely
$$ \bbeta_j=\bbeta_j^{\prime} \qquad (\exists i\in [n])(\forall j\neq i). $$
Algorithmic stability is a notion of sensitivity analysis of an algorithm under neighboring datasets. Of particular interest to our work
is the notion of {\em uniform argument stability} (UAS).
\begin{definition}[Uniform Argument Stability] \label{def:UAS}
Let ${\cal A}:\Z^n\mapsto\RR^d$ be a randomized mapping and $\delta>0$. We say that ${\cal A}$ is $\delta$-uniformly argument stable (for short, $\delta$-UAS) if
$$ \sup_{\bS\simeq\bS^{\prime}} \EE_{\cal A}\|{\cal A}(\bS)-{\cal A}(\bS^{\prime})\|\leq \delta. $$
\end{definition} 
Ocassionally, we may denote $\delta_{\cal A}(\bS,\bS^{\prime})\triangleq \|{\cal A}(\bS)-{\cal A}(\bS^{\prime})\|$, for convenience.   
The importance of algorithmic stability in machine learning comes from the fact that {\em stability implies generalization} in stochastic optimization and stochastic saddle point (SSP) problems \cite{Bousquet:2002,Bousquet:2020,Zhang:2020}. \added{Below, we restate existing results on stability implies generalization for SSP problems below. Before doing so we need to briefly introduce the {\em (strong) empirical gap function}: given a dataset $\bS$ and an algorithm $\A$, we define the empirical
gap function for a saddle point and variational inequality problem respectively as
\begin{eqnarray} 
\EmpSPgap{\A}{f_{\bS}} &:=& \EE_{\cal A}[\sup_{x,y} f_{\bS}(x(\bS),y)-f_{\bS}(x,y(\bS)) ]\label{eqn:emp_SP_gap}\\
\EmpVIgap{\A}{F_{\bS}} &:=& \EE_{\cal A}[\sup_{w} \langle F_{\bS}(w),\A(\bS)-w\rangle]. \label{eqn:emp_VI_gap}
\end{eqnarray}
Notice that in these definitions the dataset $\bS$ is fixed.
\begin{proposition}\cite{Zhang:2020,Lei2021stability}\label{prop:weak_generalization_SSP}
Consider the stochastic saddle point problem \eqref{spp_problem} with functions $f_{\bbeta}(\cdot, y)$ and $f_{\bbeta}(x, \cdot)$ being $M$-Lipschitz for all $x \in\X, y \in \Y$ and $\bbeta$-a.s.. Let $\A: \Z^n \to \X \times \Y$ be an algorithm, where $\A(\bS)=(x(\bS),y(\bS))$.
If $x(\cdot)$ is $\delta_x$-UAS and $y(\cdot)$ is $\delta_y$-UAS, and both are integrable, then 
%is $(\delta_x, \delta_y)$-UAS, then
\begin{equation}\label{eq:stab_gen_ssp_final}
	\wSPgap{\A}{f} \le \EE_{\bS}[ \EmpSPgap{\A}{f_{\bS}} ] + M[\delta_x+\delta_y].
\end{equation}
\end{proposition}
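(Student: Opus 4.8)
The plan is to prove the bound by the classical replace-one symmetrization argument for uniformly stable algorithms, adapted to the saddle-point structure. I write the internal randomness of $\A$ as an independent variable $\omega$, so that $x(\bS)=x(\bS;\omega)$ and $y(\bS)=y(\bS;\omega)$, and I keep $\EE_{\cal A}$ explicit throughout, since uniform argument stability is a bound on $\EE_{\cal A}\|\cdot\|$ and cannot be localized to a single $\omega$. The first step is to decouple the primal and dual contributions: the integrand $\EE_{\bS}[f(x(\bS),y)-f(x,y(\bS))]$ splits additively into a term depending on $(y,\omega)$ and a term depending on $(x,\omega)$, so for each fixed $\omega$ the inner supremum factorizes as
\[\sup_{x,y}\EE_{\bS}[f(x(\bS),y)-f(x,y(\bS))]=\sup_y\EE_{\bS}[f(x(\bS),y)]+\sup_x\EE_{\bS}[-f(x,y(\bS))].\]
I will then bound the two pieces symmetrically, producing the $M\delta_x$ and $M\delta_y$ terms respectively.

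Second, I handle the primal piece $\EE_{\cal A}\sup_y\EE_{\bS}[f(x(\bS;\omega),y)]$. For each $\omega$ let $y^*(\omega)$ be a (near-)maximizer of $y\mapsto\EE_{\bS}[f(x(\bS;\omega),y)]$; crucially $y^*(\omega)$ depends on the algorithmic randomness but \emph{not} on $\bS$ — this is precisely the feature of the weak gap that makes it tractable, unlike the strong gap where the test point may depend on $\bS$. I add and subtract the empirical function, $\EE_{\bS}[f(x,y^*)]=\EE_{\bS}[f_{\bS}(x,y^*)]+\EE_{\bS}[f(x,y^*)-f_{\bS}(x,y^*)]$. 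The empirical term is at most $\EE_{\bS}\sup_y f_{\bS}(x(\bS;\omega),y)$, since $y^*$ is feasible inside the supremum. For the generalization term I use the ghost-sample trick: letting $\bS^{(i)}$ be $\bS$ with its $i$-th entry replaced by a fresh i.i.d.\ copy $\bbeta_i'$, exchangeability gives $\EE_{\bS}[f(x(\bS;\omega),y^*)]=\frac{1}{n}\sum_i\EE[f_{\bbeta_i}(x(\bS^{(i)};\omega),y^*)]$, while $\EE_{\bS}[f_{\bS}(x(\bS;\omega),y^*)]=\frac{1}{n}\sum_i\EE[f_{\bbeta_i}(x(\bS;\omega),y^*)]$. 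Subtracting and invoking $M$-Lipschitzness of $f_{\bbeta_i}(\cdot,y^*)$ bounds the generalization term by $\frac{M}{n}\sum_i\EE\|x(\bS^{(i)};\omega)-x(\bS;\omega)\|$; taking $\EE_{\cal A}$ and applying the $\delta_x$-UAS bound to each neighboring pair $(\bS,\bS^{(i)})$ yields $M\delta_x$. An identical argument, now perturbing the dual iterate and using $\delta_y$-UAS, bounds the dual piece by $\EE_{\cal A}\EE_{\bS}\sup_x(-f_{\bS}(x,y(\bS;\omega)))+M\delta_y$.

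Finally, I recombine: for each fixed $\omega,\bS$ the two empirical terms reassemble into $\sup_{x,y}[f_{\bS}(x(\bS),y)-f_{\bS}(x,y(\bS))]$, whose expectation over $\omega$ and $\bS$ equals $\EE_{\bS}[\EmpSPgap{\A}{f_{\bS}}]$ by Fubini, which gives the claimed inequality.

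I expect the main obstacle to be the careful bookkeeping of the three sources of randomness ($\bS$, the ghost sample $\bbeta_i'$, and $\omega$) together with the measurability of the selected maximizer $y^*(\omega)$: since the supremum sits inside $\EE_{\cal A}$, one must either invoke a measurable selection theorem or work with $\varepsilon$-maximizers over a countable dense subset of the compact set $\Y$ and pass to the limit. The conceptual crux, however, is recognizing that the maximizing test points may be taken independent of the data, which is exactly what licenses the single-element-swap symmetrization — valid only for data-independent test points — to be applied termwise.
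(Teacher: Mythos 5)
Your proof is correct, and it is essentially the argument the paper uses: the paper states this proposition as imported from \cite{Zhang:2020,Lei2021stability} without proof, but its appendix proof of the SVI analogue (Proposition \ref{prop:weak_generalization_SVI}) is exactly your ghost-sample, replace-one-coordinate symmetrization exploiting that the weak gap's test point is data-independent. Your additional step of decoupling the primal and dual suprema (valid since each summand depends on only one of $x,y$) and bounding each with $\delta_x$-UAS and $\delta_y$-UAS separately is the natural SSP adaptation and is sound, including your handling of the measurable selection of the near-maximizers.
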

This result can be extended for SVI problems as well. We provide a formal statement below and prove it in Appendix 
\begin{proposition}\label{prop:weak_generalization_SVI}
	Consider a stochastic variational inequality with $M$-bounded operators $F_{\bbeta}(\cdot):\W\mapsto\RR^d$. 
	If ${\cal A}:\Z^n\mapsto\W$ is integrable and $\delta$-UAS, then
	\begin{equation}
		%\EE_{{\cal A},\bS}[
		\wVIgap{{\cal A}}{F}
		%] 
		\leq 
		%\EE_{{\cal A},\bS}\Big[
		\EE_{\bS}[\EmpVIgap{{\cal A}}{F_\bS} ]
		%\Big]
		+M\delta.
	\end{equation}
\end{proposition}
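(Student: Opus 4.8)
The plan is to mimic the proof of Proposition \ref{prop:weak_generalization_SSP}, exploiting the fact that in the weak gap \eqref{eqn:weak_VI_gap} the supremum over $w$ sits \emph{outside} the expectation over the sample $\bS$. This lets me bound, for each fixed realization of the internal randomness of ${\cal A}$ and each fixed $w\in\W$, the discrepancy between the population term $\EE_{\bS}[\inprod{F(w)}{{\cal A}(\bS)-w}]$ and the empirical term $\EE_{\bS}[\inprod{F_{\bS}(w)}{{\cal A}(\bS)-w}]$, and only afterwards take $\sup_w$ and the outer expectation over ${\cal A}$. The tool for controlling this discrepancy is a standard ghost-sample (resampling) symmetrization, which turns the gap into a quantity governed by UAS.

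Concretely, I would proceed in the following steps. First, since $F_{\bS}(w)$ is an unbiased estimator of $F(w)=F_{\cal P}(w)$, i.e.\ $\EE_{\bS}[F_{\bS}(w)]=F(w)$, the two linear-in-$w$ terms agree, $\inprod{F(w)}{w}=\EE_{\bS}[\inprod{F_{\bS}(w)}{w}]$, so they cancel in the difference. Second, writing $F(w)=\tfrac1n\sum_{i=1}^n \EE_{\bbeta_i'}[F_{\bbeta_i'}(w)]$ with i.i.d.\ ghost copies $\bbeta_i'$ independent of $\bS$, and relabeling $\bbeta_i\leftrightarrow\bbeta_i'$ (which preserves the joint law), I obtain $\EE_{\bS}[\inprod{F(w)}{{\cal A}(\bS)}]=\tfrac1n\sum_i \EE[\inprod{F_{\bbeta_i}(w)}{{\cal A}(\bS^{(i)})}]$, where $\bS^{(i)}$ denotes $\bS$ with its $i$-th entry replaced by $\bbeta_i'$. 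Subtracting $\EE_{\bS}[\inprod{F_{\bS}(w)}{{\cal A}(\bS)}]=\tfrac1n\sum_i\EE[\inprod{F_{\bbeta_i}(w)}{{\cal A}(\bS)}]$ yields the remainder $\tfrac1n\sum_i\EE[\inprod{F_{\bbeta_i}(w)}{{\cal A}(\bS^{(i)})-{\cal A}(\bS)}]$. Third, Cauchy--Schwarz together with $\|F_{\bbeta_i}(w)\|\le M$ bounds this remainder \emph{uniformly in} $w$ by $M\cdot\tfrac1n\sum_i\EE\|{\cal A}(\bS^{(i)})-{\cal A}(\bS)\|$; since $\bS^{(i)}\simeq\bS$ are neighbors, $\delta$-UAS caps each summand by $\delta$, for a total of $M\delta$. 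Finally, for the empirical term I use $\sup_w\EE_{\bS}[\cdot]\le\EE_{\bS}[\sup_w\,\cdot]$ to recognize it as $\EE_{\bS}[\EmpVIgap{{\cal A}}{F_{\bS}}]$, and then collect the pieces after taking the outer expectation over ${\cal A}$.

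The main obstacle, and the only genuinely delicate point, is bookkeeping the order of the supremum and the various expectations. Because UAS is a statement about $\EE_{\cal A}\|{\cal A}(\bS)-{\cal A}(\bS^{(i)})\|$ under a \emph{shared} draw of the algorithm's randomness, I must keep ${\cal A}$'s internal randomness coupled across $\bS$ and $\bS^{(i)}$ throughout the ghost-sample swap, and invoke UAS only after taking $\sup_w$ (which is legitimate precisely because the stability bound on the remainder is uniform in $w$). I expect the symmetrization identity $\EE[\inprod{F_{\bbeta_i'}(w)}{{\cal A}(\bS)}]=\EE[\inprod{F_{\bbeta_i}(w)}{{\cal A}(\bS^{(i)})}]$ to require the most care, as it hinges on $\bbeta_i,\bbeta_i'$ being i.i.d.\ and independent of the algorithm's randomness (the integrability hypothesis guarantees the attendant Fubini interchanges); everything else reduces to Cauchy--Schwarz and Jensen-type bounds.
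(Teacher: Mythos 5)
Your proposal is correct and follows essentially the same route as the paper's proof in the appendix: a ghost-sample symmetrization that rewrites $\EE_{\bS}\inprod{F(w)}{{\cal A}(\bS)-w}$ as $\tfrac1n\sum_i\EE_{\bS,\bS'}\inprod{F_{\bbeta_i}(w)}{{\cal A}(\bS^{i})-w}$, a Cauchy--Schwarz step using $M$-boundedness to isolate a stability remainder that is uniform in $w$, an application of $\delta$-UAS to the neighboring pairs $\bS\simeq\bS^{i}$, and the interchange $\sup_w\EE_{\bS}\le\EE_{\bS}\sup_w$ to recover the empirical gap. The only cosmetic difference is that you cancel the $\inprod{F(w)}{w}$ terms up front by unbiasedness, whereas the paper carries the $-w$ through the symmetrization; the substance is identical.
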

}

\subsection{Background on differential privacy}
\label{sec:background_dp}
%We say that two datasets $S,S^{\prime} \in {\cal Z}^n$ are neighbors, denoted $S\simeq S^{\prime}$, if they only differ in a single entry $\boldsymbol{\beta}_i\neq \boldsymbol{\beta}_i^{\prime}$. 
Differential privacy is an algorithmic stability type of guarantee for randomized algorithms, that certifies that the output distribution
of the algorithm ``does not change too much'' by changes in a single element from the dataset. The formal definition is
provided in eqn. \eqref{eqn:DP}.
%More precisely, we say that an algorithm ${\cal A}:{\cal Z}^n\mapsto {\cal X}$ is $(\priva,\privb)$-differentially private if for every event $E$ in the output space
%$$ \PP[{\cal A}(S)\in E] \leq e^{\varepsilon}\PP[{\cal A}(S^{\prime})\in E] + \delta 
%\quad(\forall S\simeq S^{\prime}).$$
Next we provide some basic results in differential privacy, which we will need for our work. For further information on the topic, 
we refer the reader to the monograph \cite{Dwork:2014}.
\iffalse
We will also consider an alternative notion of privacy known as {\em R\'enyi differential privacy} \cite{Mironov:2017}. For this, we recall the definition of the R\'enyi divergence of order $\alpha>1$ between two probability measures $\PP$ and $\QQ$, $D_{\RDPa}(\PP||\QQ) =\frac{1}{\RDPa-1}\log \EE_{\xi\sim\QQ}\big(\frac{d\PP}{d\QQ}\big)^{\RDPa}$, where $d\PP/d\QQ$ is the Radon-Nykodim derivative. We say that an algorithm ${\cal A}:{\cal Z}^n\mapsto {\cal X}$ is $(\RDPa,\RDPb)$-R\'enyi differentially private (RDP) if
$$ D_{\RDPa}({\cal A}(S)||{\cal A}(S^{\prime})) \leq \RDPb \quad (\forall S\simeq S^{\prime}).$$
An important property of the R\'enyi divergence is its joint quasi-convexity: if $\lambda\geq 0$, $\sum_{i\in[n]}\lambda_i =1$, and $\PP_1,\QQ_1,\ldots,\PP_n,\QQ_n$ are probability measures, then $D_{\RDPa}(\sum_{i\in[n]}\lambda_i\PP_i || \sum_{i\in[n]}\lambda_i\QQ_i) \leq \max_{i\in[n]} D_{\RDPa}(\PP_i ||\QQ_i)$ \cite[Thm.~13]{vanErven:2014}. 
%We will use this property to guarantee RDP for the average of a sequence. 
Finally, RDP and DP can be related in the following way.
\begin{proposition}[\cite{Mironov:2017}]
If ${\cal A}$ is $(\RDPa,\RDPb)$-RDP, then it is also $(\RDPb+\frac{\log 1/\delta}{\RDPa-1},\delta)$-DP, for any $0<\delta<1$.
\end{proposition}
\fi

\subsubsection{Basic privacy guarantees}

In this work, most of our privacy guarantees will be obtained by the well-known {\em Gaussian mechanism},
which performs Gaussian noise addition on a function with bounded sensitivity. Given a function 
${\cal A}:\Z^n\mapsto \RR^d$, we define its $\ell_2$-sensitivity as
\begin{equation} \label{eqn:sensitivity}
\sup_{\bS\simeq \bS^{\prime}}\|{\cal A}(\bS)-{\cal A}(\bS^{\prime})\|.
\end{equation}
If ${\cal A}$ is randomized, then the supremum must hold with high-probability over the 
randomization of ${\cal A}$ (this will not be a problem in this work, since our randomized algorithms enjoy sensitivity bounds w.p.~1). 
The Gaussian mechanism (associated to function ${\cal A}$) is defined as 
${\cal A}_{\cal G}(\bS)\sim {\cal N}({\cal A}(\bS),\sigma^2I)$.

\begin{proposition} \label{propos:Gauss_mech}
Let ${\cal A}:\Z^n\mapsto \RR^d$ be a function with $\ell_2$-sensitivity $s>0$. Then, for 
$\sigma^2=2s^2\ln(1/\privb)/\priva^2$,  the Gaussian mechanism is $(\priva,\privb)$-DP.
%\begin{itemize}
%\item[(a)] For $\RDPa>1$ and $\sigma^2=(s/\rho)^2$, the Gaussian mechanism is $(\RDPa,\alpha \rho^2/2)$-RDP; and,
%\item[(b)] For $\sigma^2=2s^2\ln(1/\privb)/\priva^2$,  the Gaussian mechanism is $(\priva,\privb)$-DP.
%\end{itemize}
\end{proposition}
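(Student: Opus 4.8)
The plan is to reduce the claim to a comparison of two Gaussians and then control the associated privacy-loss random variable. Fix neighboring datasets $\bS\simeq\bS'$ and write $\mu={\cal A}(\bS)$, $\mu'={\cal A}(\bS')$, so that by the $\ell_2$-sensitivity assumption $\|\mu-\mu'\|\le s$; the two output distributions of the Gaussian mechanism are then $P={\cal N}(\mu,\sigma^2 I)$ and $Q={\cal N}(\mu',\sigma^2 I)$, with densities $p$ and $q$. Since the defining condition \eqref{eqn:DP} is symmetric in $(\bS,\bS')$, it suffices to prove $P(E)\le e^{\priva}Q(E)+\privb$ for every event $E$; the reverse inequality follows by exchanging the roles of $\mu$ and $\mu'$.

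First I would record the standard privacy-loss lemma: if the privacy loss $L(x):=\log\big(p(x)/q(x)\big)$ satisfies $\PP_{x\sim P}[L(x)>\priva]\le\privb$, then $P(E)\le e^{\priva}Q(E)+\privb$ for all $E$. This is proved by splitting $E$ along the bad set $B=\{x:L(x)>\priva\}$: on $E\setminus B$ one has $p(x)\le e^{\priva}q(x)$, hence $\int_{E\setminus B}p\le e^{\priva}Q(E)$, while $P(E\cap B)\le P(B)\le\privb$ by hypothesis. Thus the whole statement reduces to a one-dimensional Gaussian tail bound on $L$.

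Next I would compute $L$ explicitly. Writing $x=\mu+\sigma z$ with $z\sim{\cal N}(0,I)$ and setting $v=\mu'-\mu$, a direct expansion of the Gaussian densities gives
\begin{equation*}
L(x)=\frac{\|x-\mu'\|^2-\|x-\mu\|^2}{2\sigma^2}=-\frac{\inprod{z}{v}}{\sigma}+\frac{\|v\|^2}{2\sigma^2}.
\end{equation*}
Since $\inprod{z}{v}\sim{\cal N}(0,\|v\|^2)$, the privacy loss is Gaussian, $L\sim{\cal N}\big(\|v\|^2/(2\sigma^2),\,\|v\|^2/\sigma^2\big)$. The tail $\PP[L>\priva]$ is monotone increasing in $\|v\|$, so it suffices to treat the worst case $\|v\|=s$; writing $a=s/\sigma$ this yields $\PP[L>\priva]=\PP[Z>\priva/a-a/2]$ for a standard normal $Z$.

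Finally I would substitute $\sigma^2=2s^2\ln(1/\privb)/\priva^2$, equivalently $a=\priva/\sqrt{2\ln(1/\privb)}$, into the threshold $\priva/a-a/2$ and invoke the Gaussian tail estimate $\PP[Z>t]\le e^{-t^2/2}$ to check that the resulting bound is at most $\privb$. I expect this last step to be the main obstacle: the algebra must be arranged so that the $e^{-t^2/2}$ factor reproduces exactly $\privb=e^{-\ln(1/\privb)}$, and the lower-order cross terms in $t^2$ must be shown to have a favorable sign. This is the familiar delicate-constant issue for the Gaussian mechanism, typically handled by restricting to $\priva\le 1$ (which also guarantees $t\ge 0$) and absorbing the residual into the slack of the tail estimate. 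Once $\PP[L>\priva]\le\privb$ is verified, the lemma of the second paragraph together with the symmetry noted in the first paragraph yields $(\priva,\privb)$-DP.
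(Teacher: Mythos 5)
The paper does not actually prove this proposition: it is quoted as the standard Gaussian mechanism guarantee (the reader is pointed to the monograph of Dwork and Roth), so there is no internal argument to compare yours against. Your proposal is precisely the canonical proof of that standard result --- reduce $(\priva,\privb)$-DP to a tail bound on the privacy-loss random variable, observe that for two isotropic Gaussians the loss is itself Gaussian with mean $\|v\|^2/(2\sigma^2)$ and variance $\|v\|^2/\sigma^2$, and then bound $\PP[Z>\priva/a-a/2]$ --- and everything up to the last step is correct, including the reduction lemma, the computation of $L$, and the monotonicity in $\|v\|$ that lets you pass to the worst case $\|v\|=s$.

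The one genuine gap is the step you yourself flag as the obstacle, and it is worth being precise about why it does not close as written. With $a=\priva/\sqrt{2\ln(1/\privb)}$ the threshold is $t=\sqrt{2\ln(1/\privb)}-\priva/\bigl(2\sqrt{2\ln(1/\privb)}\bigr)$, and the crude estimate $\PP[Z>t]\le e^{-t^2/2}$ gives
\[
e^{-t^2/2}=\privb\, e^{\priva/2}\, e^{-\priva^2/(16\ln(1/\privb))},
\]
which exceeds $\privb$ whenever $\priva<8\ln(1/\privb)$, i.e.\ in essentially every regime of interest. So the residual cross term does \emph{not} have a favorable sign, and the subgaussian tail bound alone cannot finish the proof. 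One must instead use the Mills-ratio estimate $\PP[Z>t]\le e^{-t^2/2}/(t\sqrt{2\pi})$ and impose a parameter restriction such as $\priva\le 1$ so that the prefactor $1/(t\sqrt{2\pi})$ absorbs the surplus $e^{\priva/2}$; this is exactly why the Dwork--Roth statement carries the constant $\ln(1.25/\privb)$ rather than $\ln(1/\privb)$. As the paper states the proposition with the bare constant $2$ and no hypothesis on $\priva$, a fully rigorous proof either needs that sharper tail computation carried out explicitly (with the accompanying range restriction) or a slight adjustment of the constant in $\sigma^2$; your outline identifies the right place to do this work but does not do it.
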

Our algorithms will adaptively use a DP mechanism such as the above. Certifying privacy of a composition can be 
achieved in different ways. The most basic result establishes that if we use disjoint batches of data at each iteration, then the 
composition will preserve the largest privacy parameter among its building blocks. This result is known as 
{\em parallel composition}, and its proof its a direct application of %the data-processing inequality for the R\'enyi divergence, and 
the post-processing property of DP. %, respectively.

\begin{proposition}[Parallel composition of differential privacy] \label{propos:parallel_comp}
Let $\bS=(\bS_1,\ldots,\bS_K)\in\Z^n$ be a dataset partitioned on blocks of sizes $n_1,\ldots,n_K$, respectively. 
${\cal A}_k:\Z^{n_k}\times \RR^{d\times (k-1)}\mapsto \RR^d$, $k=1,\ldots,K$, be a sequence of mechanisms, 
and let ${\cal A}:\Z^n\mapsto\RR^d$
be given by
\begin{eqnarray*}
{\cal B}_1(\bS) &=& {\cal A}_1(\bS_1)\\
{\cal B}_{k}(\bS) &=& {\cal A}_{k}(\bS_{k}, {\cal B}_{1}(\bS),{\cal B}_{2}(\bS),\ldots,{\cal B}_{k-1}(\bS)) \quad (\forall k=2,\ldots,K-1)\\
{\cal A}(\bS)            &=&{\cal A}_{K}(\bS_{K},{\cal B}_{1}(\bS),{\cal B}_{2}(\bS),\ldots,{\cal B}_{K-1}(\bS)).
\end{eqnarray*}
Then, 
%\begin{enumerate}
%\item[(a)]  If each ${\cal A}_k$ is $(\RDPa,\RDPb_k)$-RDP in its first argument (i.e., w.r.t.~$S_k$)
%then ${\cal A}$ is $(\RDPa, \max_k \RDPb_k)$-RDP.
%\item[(b)] 
If each ${\cal A}_k$ is $(\priva_k,\privb_k)$-DP in its first argument (i.e., w.r.t.~$\bS_k$) then ${\cal A}$ is $(\max_k \priva_k, \max_k \privb_k)$-DP.
%\end{enumerate}
\end{proposition}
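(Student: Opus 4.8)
The plan is to reduce the adaptive composition to a single privacy-consuming step by exploiting the disjointness of the blocks together with the post-processing property of DP. First I would fix arbitrary neighbors $\bS \simeq \bS'$ and observe that, since they differ in exactly one data point while the blocks $\bS_1,\ldots,\bS_K$ are disjoint, there is a unique index $i$ with $\bS_i \neq \bS'_i$ and $\bS_k = \bS'_k$ for all $k \neq i$. Consequently all the privacy cost must be charged to the single mechanism ${\cal A}_i$, and the target bound $(\max_k \priva_k, \max_k \privb_k)$ follows once we show that ${\cal A}(\bS)$ is $(\priva_i,\privb_i)$-DP.

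Next I would organize the generation of the full transcript $T=({\cal B}_1(\bS),\ldots,{\cal B}_{K-1}(\bS),{\cal A}(\bS))$ into three stages: the prefix $U=({\cal B}_1,\ldots,{\cal B}_{i-1})$, produced from blocks $\bS_1,\ldots,\bS_{i-1}$; the output $T_i={\cal A}_i(\bS_i,U)$; and the suffix $V=({\cal B}_{i+1},\ldots,{\cal A}(\bS))$, produced from $U$, $T_i$, and blocks $\bS_{i+1},\ldots,\bS_K$. Since $\bS_k=\bS'_k$ for $k<i$, the prefix $U$ has identical distribution under $\bS$ and $\bS'$, so the two processes can be coupled to share a common prefix; denote its law $\rho$. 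Conditioning on $U=u$, the mechanism ${\cal A}_i(\cdot,u)$ is $(\priva_i,\privb_i)$-DP in its first argument by hypothesis, so the conditional law of $T_i$ satisfies the $(\priva_i,\privb_i)$-DP inequality.

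Then I would handle the suffix and the final projection via post-processing. Conditioned on $U=u$, the suffix $V$ is produced by a randomized map applied to $(u,T_i)$ using only the blocks $\bS_{i+1}=\bS'_{i+1},\ldots,\bS_K=\bS'_K$, which are identical across the two datasets; hence $(T_i,V)$ is obtained from $T_i$ by a data-independent post-processing, and post-processing preserves the $(\priva_i,\privb_i)$-DP guarantee. Thus, conditioned on $U=u$, the laws of $(T_i,V)$ under $\bS$ and under $\bS'$ satisfy the $(\priva_i,\privb_i)$-DP inequality for every measurable event. Integrating this conditional inequality against the common prefix law $\rho$ — using that both datasets induce the \emph{same} $\rho$, so the bound is stable under mixing over a shared auxiliary distribution — yields the unconditional DP inequality for the entire transcript $T$. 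Finally, ${\cal A}(\bS)$ is the last coordinate of $T$, i.e.\ a deterministic post-processing of it, so it is $(\priva_i,\privb_i)$-DP as well; bounding $\priva_i \le \max_k \priva_k$ and $\privb_i \le \max_k \privb_k$ completes the argument.

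The step I expect to be the main obstacle is the careful bookkeeping of the adaptivity: one must verify that, after conditioning on the shared prefix $U$, the differing block enters \emph{only} through ${\cal A}_i$, and that the suffix is genuinely a post-processing that never re-touches the point where $\bS$ and $\bS'$ differ. Making the ``integrate over the common prefix'' step rigorous — namely that DP is preserved when two output distributions are mixed against one and the same mixing measure — is the other point requiring care, though it follows from linearity of expectation (equivalently, joint convexity of the associated hockey-stick divergence) once the coupling of $U$ is in place.
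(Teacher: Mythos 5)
Your proof is correct and is exactly the argument the paper has in mind: the paper gives no written proof, remarking only that the result ``is a direct application of the post-processing property of DP,'' and your write-up supplies the standard details (unique differing block, shared prefix, DP of the one mechanism touching it, suffix as post-processing, joint convexity for the mixture over the common prefix). No gaps.
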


%\subsubsection{Privacy amplification by subsampling and the moments accountant method}
Some of the algorithms we develop in this work make repeated use of the data, and certifying privacy for these 
algorithms requires the use of adaptive composition results in DP (see, e.g.~\cite{Dwork:2010,Dwork:2014}). For our algorithms, it 
is particularly important to leverage the sampling with replacement procedure to select the data that is used
at each iteration, for which sharp bounds on DP can be obtained by the {\em moments accountant method} \cite{Abadi:2016}. 
Below we summarize a specific version of this method that suffices for our purposes.\footnote{In our case we use uniform sampling on each iteration, as opposed to the Poisson sampling of \cite{Abadi:2016}; 
however, it is possible to verify that similar moment estimates lead to our stated result.}

\begin{theorem}[\cite{Abadi:2016}]\label{thm:dp_prox_multipass}
Consider sequence of functions ${\cal A}_1,\ldots,{\cal A}_K$, 
where ${\cal A}_k:\Z^{n_k}\times \RR^{d\times(k-1)}\mapsto\RR^d$ is a function with 
sensitivity bounded as a function of the last data batch size, as follows
$$ \sup_{L\in\RR^{d\times(k-1)},\bS_k\simeq\bS_k^{\prime}}\|{\cal A}_k(\bS_k, L)-{\cal A}_k(\bS_k^{\prime}, L)\|\leq {s}%{|\bS_k|}
.$$
Consider the mechanism obtained by sampling a random subset of size $m$ from the dataset, i.e., letting $\bS_k\sim (\mbox{Unif}([\bS]))^m$, and composing it with a Gaussian mechanism with noise $\sigma^2$, i.e.
%at random a single element form the dataset, i.e., letting  $i_k\sim \mbox{Unif}[n]$ for $k=1,\ldots,K$ independently, and for noise parameter $\sigma^2=2s^2\ln(1/\privb)/\priva^2$, let
\begin{eqnarray*}
{\cal B}_1(\bS) &=& ({\cal A}_1)_{\cal G}(\bS_{1}) \\
{\cal B}_{k}(\bS) &=& ({\cal A}_{k})_{\cal G}(\bS_{k}, {\cal B}_{1}(\bS),{\cal B}_{2}(\bS),\ldots,{\cal B}_{k-1}(\bS)) \quad (\forall k=2,\ldots,K).
%{\cal A}(S)            &=&{\cal A}_{K}(S_{K},{\cal B}_{1}(S),{\cal B}_{2}(S),\ldots,{\cal B}_{K-1}(S)).
\end{eqnarray*}
There exists an absolute constant $c_1>0$, such that if $\priva<c_1K(m/n)^2$ and the noise parameter $\sigma\geq \sqrt{2K\ln(1/\privb)}sm/[n\priva]$, then  $\A(\bS) := \{{\cal B}_1(\bS), \ldots, {\cal B}_K(\bS)\}$  is $(\priva,\privb)$-differentially private.%\cnote{I made it more general. Could you please take care of Theorem \ref{thm:multipass_prox_optimal} now?}\dnote{Are you missing $\frac{s}{|\bS_k|}$ in $\sigma$? Or you can say Gaussian mechanism with noise $\sigma^2\frac{s^2}{|\bS_k|^2}$ to make it equivalent to \cite{Abadi:2016}. After this change, my updates to Theorem \ref{thm:multipass_prox_optimal} hold.} 
%\cnote{I was missing an $s$ factor, but the dependence on batch size is already included (in that paper, $q=m/n$).}\dnote{Not sure that is true. The algorithm in that paper takes clipping radius $C$ into account which is the sensitivity of update $\A_k$. In particular, for NISPP, sensitivity of (deterministic method) $\A_k$ is $\frac{M}{\lambda B}$ where $s = \frac{M}{\lambda}$ and $|\bS_k| = B$. Then, for $\sigma_k = \frac{M}{\lambda B}\frac{\sqrt{\ln(1/\privb)}\ln(K/\privb)}{\priva}$, each update $\A_k$ is $(\frac{\priva}{\ln(1/\privb)\ln(K/\privb)}, \privb/K)$-DP.  Applying subsampling amplification of factor $B/n$, then strong composing $K$ times and ignoring $\delta$ probability of failure, we have that overall algorithm is $(\priva, \privb)$-DP. With stronger moments accountant, we it should suffice to use $\sigma_k = \frac{M}{\lambda B}\frac{\sqrt{\ln(1/\privb)}}{\priva}$. As per this theorem, we require $\sigma = s\frac{\sqrt{\ln(1/\privb)}}{\priva} = \frac{M}{\lambda}\frac{\sqrt{\ln(1/\privb)}}{\priva}$ which is much bigger.\\ If it is only $s$ missing here then we have a problem and I think some of the previous works also have problem. Please check more carefully. If need be, we can have a discussion.} \cnote{Clipping is not relevant for Thm. 1 in Abadi et al., since they assume sensitivity 1. And I think your privacy amplification might have a problem, since moments accountant restricts the range of $\varepsilon$. See the bound on $\varepsilon$ in the statement.}
\end{theorem}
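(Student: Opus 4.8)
The plan is to follow the \emph{moments accountant} framework of Abadi et al. First I would introduce, for neighboring datasets $\bS\simeq\bS'$, the privacy loss random variable of the full transcript,
$$\boldsymbol{\Upsilon}(\bS,\bS') := \log\frac{d\PP[\A(\bS)=(w_1,\dots,w_K)]}{d\PP[\A(\bS')=(w_1,\dots,w_K)]},$$
and observe that by the chain rule for the joint density of the iterates, $\boldsymbol{\Upsilon}$ splits into a sum of per-iteration conditional log-likelihood ratios $\boldsymbol{\Upsilon}_i$. Since $(\priva,\privb)$-DP is implied by the uniform tail bound $\PP[\boldsymbol{\Upsilon}(\bS,\bS')>\priva]\le\privb$ over all $\bS\simeq\bS'$, the entire problem reduces to controlling the upper tail of $\boldsymbol{\Upsilon}$.

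To that end I would define, for $q>0$, the per-step log-moment generating function $c_i(q):=\sup_{\bS\simeq\bS'}\log\EE[\exp\{q\boldsymbol{\Upsilon}_i\}]$, the supremum taken over neighbors and over the realized past iterates. A sequential conditioning argument --- peeling off the last factor, bounding its conditional $q$-th moment by $\exp\{c_K(q)\}$, and iterating down to the first step --- yields the additive composition $\EE[\exp\{q\boldsymbol{\Upsilon}\}]\le\exp\{\sum_{i=1}^K c_i(q)\}$. Coupling this with Markov's inequality gives $\PP[\boldsymbol{\Upsilon}>\priva]\le\exp\{\sum_{i=1}^K c_i(q)-q\priva\}$, so everything hinges on a sharp upper bound for each $c_i(q)$.

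The main work, and the chief obstacle, is bounding the individual $c_i(q)$. Conditioned on the previous iterates, the $i$-th mechanism emits $\wt{w}_i={\cal A}_i(\bS_i,\cdot)+\boldsymbol{\xi}_i$ with $\boldsymbol{\xi}_i\sim{\cal N}(0,\sigma^2\bI)$, so the conditional density ratio depends only on the shift $\boldsymbol{\Delta}_i={\cal A}_i(\bS_i,\cdot)-{\cal A}_i(\bS_i',\cdot)$. Using the rotational invariance of the isotropic Gaussian I would align $\boldsymbol{\Delta}_i$ with the first coordinate, reducing to a one-dimensional Gaussian problem with $\|\boldsymbol{\Delta}_i\|\le s$ by the sensitivity hypothesis. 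The crucial subsampling step is that, over the random draw of $\bS_i$, the changed datapoint enters the batch only with probability $p=m/n$; hence the denominator density is the mixture $(1-p)\,{\cal N}(0,\sigma^2)+p\,{\cal N}(\|\boldsymbol{\Delta}_i\|,\sigma^2)$. Invoking the subsampled-Gaussian moment estimate (Lemma 3 of \cite{Abadi:2016}) in the regime $pq/\sigma\ll1$ then gives, up to constants and lower-order terms, a bound of the form $c_i(q)\le q^2p^2s^2/\sigma^2$, uniformly in the past and in $\bS\simeq\bS'$.

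Finally I would assemble the pieces. Summing over the $K$ steps yields $\sum_{i=1}^K c_i(q)\le q^2Kp^2s^2/\sigma^2$, so $\PP[\boldsymbol{\Upsilon}>\priva]\le\exp\{q^2Kp^2s^2/\sigma^2-q\priva\}$. Optimizing the exponent over $q$ (taking $q\asymp\priva\sigma^2/(Kp^2s^2)$) and requiring the resulting value to be at most $\privb$ forces $\sigma\gtrsim\sqrt{K\ln(1/\privb)}\,ps/\priva=\sqrt{K\ln(1/\privb)}\,sm/(n\priva)$, matching the stated noise level (the sensitivity $s$ enters precisely because the reduced one-dimensional shift has magnitude $s$, i.e.\ the effective noise multiplier is $\sigma/s$). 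The side-condition $\priva<c_1K(m/n)^2$ is exactly what places the optimizing $q$ in the regime $pq/\sigma\ll1$ where the per-step moment bound is valid, rendering the whole argument self-consistent.
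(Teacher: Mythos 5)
Your proposal is correct and is essentially the same argument the paper relies on: the theorem is quoted from Abadi et al.\ and the intended justification (visible in the paper's own sketch) is exactly the moments-accountant route you describe --- privacy loss random variable, per-step log-moment bounds via the subsampled-Gaussian mixture lemma, additive composition by sequential conditioning, and optimization over $q$ to extract the stated $\sigma$ and the side-condition on $\priva$. The only point worth noting is the paper's caveat that it uses uniform subsampling rather than the Poisson sampling of Abadi et al., which you implicitly handle correctly by taking the inclusion probability of the differing datapoint to be $m/n$.
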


\section{The Noisy Stochastic Extragradient Method}\label{sec:nseg}

%\cnote{Still missing some equations and definitions from the Intro}
%\cnote{Move this result to Preliminaries Section when we create it.}

To solve the DP-SVI problem we propose %a novel $k$-level
a noisy stochastic extragradient method (NSEG) in Algorithm \ref{alg:alg1}.
\begin{algorithm}[H]
	\caption{Noisy Stochastic Extragradient (NSEG) Method}
	\label{alg:alg1}
	\begin{algorithmic}[1]
		\STATE {\bf Input:} Starting point $u_0 \in \W$, dataset $\bS=(\bbeta_i)_{i\in[n]}\sim{\cal P}^n$, 
		%sequence of monotone operators, $(F_{1,t})_{t\in[T]},(F_{2,t})_{t\in[T]}$, 
		stepsizes $(\gamma_t)_{t\in[T]}$
		\FOR {$t = 1, \dots, T$}
		%\STATE Sample $\bg_1^t,\bg_2^t\sim {\cal N}(0,\sigma_t^2I_{d\times d})$ 
		%\FOR {$i = 1, \dots, k$}
		\STATE $F_{1,t}(\cdot) = F_{\bB_t^1}(\cdot)+\boldsymbol{\xi}_t^1$, where $\bB_t^1\subseteq\bS$ and  $\boldsymbol{\xi}_t^1\sim {\cal N}(0,\sigma_t^2)$
		\STATE $w_t = \proj_{\W}(u_{t-1}-\gamma_t F_{1,t}(u_{t-1}))$ %\label{alg1:update_w} %What was the point of these labels??
		\STATE $F_{2,t}(\cdot)= F_{\bB_t^2}(\cdot)+\boldsymbol{\xi}_t^2$, where $\bB_t^2\subseteq\bS$ and $\boldsymbol{\xi}_t^1\sim {\cal N}(0,\sigma_t^2)$
		\STATE $u_t = \proj_{\W}(u_{t-1}-\gamma_t F_{2,t}(w_t))$ %\label{alg1:update_u}
		%$u_t_i \gets \T{i}{\gamma_tF_{\bB_t}}{u_{t-1}-\gamma_tM\bg^t_2}$ \label{alg1:update_u}
		%\ENDFOR
		%\STATE $w_t \gets \proj_{\W}[u_t_{k-1} - \gamma_tM\bg^t_1]$ \label{alg1:update_w}
		%\STATE $u_t \gets u_t_{k}$
		\ENDFOR
		%\STATE 
		\STATE{\bf return} %$w_T$ %$\bw_T=w_{\tau}$, where $\tau\sim\mbox{Unif}[T]$
		$\wb{w}^T =(\tsum_{t = 1}^T\gamma_t)^{-1} \tsum_{t = 1}^{T}\gamma_tw_t$
	\end{algorithmic}
\end{algorithm}
The name noisy and stochastic in Algorithm \ref{alg:alg1} is justified by the sequence of operators $F_{1,t},F_{2,t}$ we use:
%The operators $F_{1,t}, F_{2,t}$ used in the above algorithm are noisy and stochastic. 
%In particular, we use
\begin{eqnarray} \label{eqn:stoch_oracle_nseg}
	F_{1,t}(\cdot) \triangleq F_{\bB_t^1}(\cdot)+\boldsymbol{\xi}_t^1,
	&\quad& F_{2,t}(\cdot)\triangleq F_{\bB_t^2}(\cdot)+\boldsymbol{\xi}_t^2.
\end{eqnarray} 
where $\bB_t^1,\bB_t^2$ are batches extracted from dataset $\bS$, and $\boldsymbol{\xi}_t^1,\boldsymbol{\xi}_t^2\stackrel{i.i.d.}{\sim}{\cal N}(0,\sigma_t^2)$. We will denote the batch size of batch $\bB_t^1$ and $\bB_t^2$ as 
$B_t:=|\bB_t^1|=|\bB_t^2|$.
%Number of samples in the batch $\bB_t$ is denoted by $B_t$.
The exact details of the sampling method for $\bB_t$ will depend on the variant of the algorithm.
%we can analyze the case when both operators are the same, as opposed to previous works. 
Here, we detail some key features of the above algorithm. Stochastic extragradient was proposed in \cite{Juditsky:2011} where they do not have any noise addition in $F_{1,t}, F_{2,t}$ (stochasticity only arises from the dataset randomness), and where disjoint batches 
are used for all iterations, as well as within iterations. This choice
is motivated by the goal of extracting population risk bounds for their algorithm. 

Another important consideration is that this algorithm can also be applied to an SSP problem by using as stochastic oracle the monotone operator associated to the stochastic convex-concave function \eqref{eqn:monotone_op_SP}, over the set $\W={\cal X}\times{\cal Y}$. From here onwards, when we say that a certain SVI algorithm is applied to an SSP, we mean using the choices above for the operator and feasible set, respectively. 

We start by stating the convergence guarantees for the  single-pass \knseg\, method. This is obtained as a direct corollary of 
\cite[Thm.~1]{Juditsky:2011}, where we use an explicit bound on the oracle error with the variance of the Gaussian.

\begin{theorem}[\cite{Juditsky:2011}]\label{thm:dp_generalization_conv}
Consider a stochastic variational inequality \eqref{main_problem}, with operators $F_{\bbeta}$ 
in ${\cal M}^1(L,M)$. Let $\A$ be the \knseg~method (Algorithm \ref{alg:alg1}) where \textcolor{red}{$0<\gamma_t\leq 1/[\sqrt3 L]$} and $(\bB_t^1,\bB_t^2)_{t}$ are 
independent random variables from a product distribution $\bB_t^1,\bB_t^2\sim{\cal P}^{B_t}$, satisfies
\begin{equation*} 
%\EE_{\bS,{\cal A}}\Big[
\VIgap{{\cal A}}{F}
%\Big] 
\leq \frac{K_0(T)}{\Gamma_T},
\end{equation*}
where $K_0(T)\triangleq\Big( D^2 +7\sum_{t\in[T]}\gamma_t^2[M^2/2+d\sigma_t^2]\Big)$, 
$\Gamma_T = \tsum_{t=1}^T\gamma_t$, ${\cal A}(\bS)$ is the output of Algorithm \ref{alg:alg1} on the dataset $\bS=\bigcup_t \bB_t \sim{\cal P}^n$ and expectation in the left hand side is taken over the dataset draws, random sample batch choices, as well as noise $\bg^t_1, \bg^t_2$. 
%Moreover, if both noise schedule, batch sizes and stepsizes are constant throughout iterations, $\sigma_t\equiv\sigma$, $B_t\equiv B$ and $\gamma_t\equiv\gamma$, then for all $\Lambda>0$,
%\begin{eqnarray*} 
%\PP_{\bS,{\cal A}}\Big[\VIgap{{\cal A}(\bS)}{F} > \frac{1}{\gamma T}\Big( K_0(T)+\Lambda K_1(T)\Big) \Big]\leq \exp\{-\Lambda^2/3\}+\exp\{-\Lambda T\}, 
%\end{eqnarray*}
%where $K_1(T)=7M^2(1+\sigma^2d)T\gamma^2/2+2MD\sqrt{1+\sigma^2d}\sqrt{T}\gamma.$

On the other hand, $\A$ applied to a stochastic \eqref{spp_problem} problem attains saddle point gap %\dnote{We never mention in the entire paper that Algorithms can be applied to \eqref{spp_problem} with appropriately defined $F$ operator.}
\begin{equation*} 
%\EE_{\bS,{\cal A}}\Big[
\SPgap{{\cal A}}{f}
%\Big] 
\leq \frac{K_0(T)}{\Gamma_T}.
\end{equation*}
%where $\A(\bS)=(x({\cal A};\bS),y({\cal A};\bS))\in \X\times\Y$ is the algorithm output.
\end{theorem}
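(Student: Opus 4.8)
The plan is to recognize Algorithm \ref{alg:alg1} as an instance of the stochastic extragradient (Mirror--Prox) scheme analyzed in \cite{Juditsky:2011}, so that the claim follows once we verify the hypotheses of their Theorem~1 for our particular stochastic oracle and quantify its error. Concretely, I would view the pair $(F_{1,t},F_{2,t})$ from \eqref{eqn:stoch_oracle_nseg} as a stochastic first-order oracle for the monotone operator $F$, and check the two properties their analysis requires: (i) conditional unbiasedness, and (ii) a uniform second-moment bound on the oracle error. For (i): since $\mathbf{B}_t^1,\mathbf{B}_t^2\sim\mathcal{P}^{B_t}$ are drawn independently of the past and $\boldsymbol{\xi}_t^1,\boldsymbol{\xi}_t^2\sim\mathcal{N}(0,\sigma_t^2\mathbb{I}_d)$ are zero-mean, we have $\EE[F_{\mathbf{B}_t^i}(w)]=F(w)$ and $\EE[\boldsymbol{\xi}_t^i]=0$, so $F_{i,t}(\cdot)$ is conditionally unbiased for $F(\cdot)$ at the point of evaluation. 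For (ii): decomposing $F_{i,t}(w)-F(w)=[F_{\mathbf{B}_t^i}(w)-F(w)]+\boldsymbol{\xi}_t^i$, the sampling part is controlled by the $M$-boundedness of the operators (so $\|F_{\mathbf{B}_t^i}(w)\|\le M$ and $\|F(w)\|\le M$), while the noise part contributes $\EE\|\boldsymbol{\xi}_t^i\|^2=d\sigma_t^2$ by independence; this accounts for the per-iteration constant $M^2/2+d\sigma_t^2$ appearing in $K_0(T)$.

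Next I would feed these into the extragradient telescoping argument. The engine is the one-step prox inequality: using the optimality conditions of the two projections in lines~4 and~6, one obtains, for every fixed $w\in\W$,
\[
\gamma_t\inprod{F_{2,t}(w_t)}{w_t-w}\le \tfrac12\|u_{t-1}-w\|^2-\tfrac12\|u_t-w\|^2-\tfrac12\|w_t-u_{t-1}\|^2+\tfrac{\gamma_t^2}{2}\|F_{2,t}(w_t)-F_{1,t}(u_{t-1})\|^2.
\]
Splitting the last term, the deterministic Lipschitz contribution $\tfrac{\gamma_t^2}{2}\|F(w_t)-F(u_{t-1})\|^2\le\tfrac{\gamma_t^2 L^2}{2}\|w_t-u_{t-1}\|^2$ is absorbed by the negative $-\tfrac12\|w_t-u_{t-1}\|^2$ once $\gamma_t\le 1/[\sqrt3 L]$ (the extra factor leaving room for the stochastic cross-terms), while the remaining stochastic part is handled in expectation by the variance bound from (ii). Summing over $t$, dividing by $\Gamma_T$, and invoking monotonicity $\inprod{F(w)}{w_t-w}\le\inprod{F(w_t)}{w_t-w}$ together with Jensen's inequality for the average iterate reduces $\sup_{w}\inprod{F(w)}{\bar w^T-w}$ to the telescoped quantity whose leading term is $D^2/\Gamma_T$, yielding the stated numerator $K_0(T)$ with the constant $7$.

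The main obstacle is the interchange of $\sup_{w\in\W}$ and $\EE$. Because the comparator $w$ may be chosen adversarially as a function of the entire random trajectory, the zero-mean oracle-error terms cannot simply be averaged to zero inside the supremum. Inside \cite[Thm.~1]{Juditsky:2011} this is resolved by introducing an auxiliary ``ghost'' sequence driven solely by the noise, whose trajectory dominates $\sup_w\sum_t\gamma_t\inprod{F_{i,t}-F}{w-w_t}$ and whose expected squared increments are again controlled by the same per-step variance; this is the origin of the constant $7$. Since all hypotheses are verified, the bound on $\VIgap{{\cal A}}{F}$ follows as a corollary, rather than requiring us to reprove this delicate step.

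Finally, the \eqref{spp_problem} bound follows by applying the identical argument to the monotone operator \eqref{eqn:monotone_op_SP} associated with $f$ over $\W=\X\times\Y$, and inserting at the front of the telescoping the convex--concave inequality
\[
f(w_t^x,y)-f(x,w_t^y)\le\inprod{\nabla_x f(w_t)}{w_t^x-x}-\inprod{\nabla_y f(w_t)}{w_t^y-y}=\inprod{F(w_t)}{w_t-w},
\]
valid for $w=(x,y)$ and $w_t=(w_t^x,w_t^y)$ by convexity in $x$ and concavity in $y$. Taking the supremum over $(x,y)$ then converts the VI estimate directly into the saddle-point gap $\SPgap{{\cal A}}{f}$, with the same right-hand side $K_0(T)/\Gamma_T$.
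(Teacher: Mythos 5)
Your proposal is correct and matches the paper's approach: the paper likewise treats this as a direct corollary of \cite[Thm.~1]{Juditsky:2011}, verifying that $(F_{1,t},F_{2,t})$ is a conditionally unbiased oracle for $F$ and plugging the explicit per-iteration error bound $M^2/2+d\sigma_t^2$ (sampling variance plus Gaussian noise) into their result, with the SSP case handled via the associated monotone operator. Your additional sketch of the internal prox inequality and the auxiliary sequence handling the $\sup$--$\EE$ interchange is consistent with how that cited theorem works, so nothing is missing.
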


\subsection{Differential privacy analysis of \knseg~ method}

We now proceed to establish the privacy guarantees for the single-pass variant of Algorithm \ref{alg:alg1}. This is 
a direct consequence of Propositions \ref{propos:Gauss_mech} and \ref{propos:parallel_comp}, and the fact that 
each operator evaluation has sensitivity bounded by $2M/B_t$. 
%the fact that we use disjoint batches at every iteration, so the privacy can be preserved by
%the standard Gaussian mechanism (this is known in the DP literature as {\em parallel composition}). 
%The main observation to get 
%privacy guarantees is that each iteration of the algorithm requires the computation of two fixed-point iterations
%of the resolvent. The sensitivity of each of them is controlled by the Lipschitz constant and batch size, so the
%basic composition theorem of differential privacy suffices to get the desired privacy guarantee. Finally, we
%observe that since we are using disjoint batches in each iteration, we do not need to amplify noise by adaptive
%composition.
%\dnote{Should there be a discussion on sensitivity of Algorithm \ref{alg:alg1} before discussion of privacy?}
%\cnote{Added}
\begin{proposition} \label{prop:privacy_NSEG_step}
Algorithm \ref{alg:alg1} with batch sizes $(B_t)_{t\in[T]}$ and variance $\sigma_t^2=\frac{8M^2}{B_{t}^2}\frac{\ln(1/\privb)}{\priva^2}$ 
is $(\priva, \privb)$-differentially private.
\end{proposition}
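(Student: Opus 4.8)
The plan is to present the single-pass NSEG method as a composition of $2T$ Gaussian mechanisms, each acting on its own disjoint block of the dataset, and then to invoke parallel composition (Proposition~\ref{propos:parallel_comp}). In the single-pass regime the batches $\bB_1^1,\bB_1^2,\ldots,\bB_T^1,\bB_T^2$ are mutually disjoint and partition $\bS$, so it suffices to certify that each individual operator-evaluation step is $(\priva,\privb)$-DP with respect to the single batch it consumes.

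The first step is to bound the $\ell_2$-sensitivity of the empirical operator. Fix any evaluation point $w\in\W$ and a batch $\bB$ of size $B_t$; for neighboring batches $\bB\simeq\bB'$ differing in one datapoint $\bbeta$ vs.\ $\bbeta'$, the $M$-boundedness of each $F_{\bbeta}$ yields
\[
\|F_{\bB}(w)-F_{\bB'}(w)\|=\tfrac{1}{B_t}\|F_{\bbeta}(w)-F_{\bbeta'}(w)\|\le \tfrac{2M}{B_t},
\]
uniformly in $w$. Hence the query $w\mapsto F_{\bB}(w)$ has sensitivity at most $s=2M/B_t$ at every fixed evaluation point.

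Next I would match this sensitivity to the Gaussian mechanism. With $s=2M/B_t$, Proposition~\ref{propos:Gauss_mech} calls for noise variance $2s^2\ln(1/\privb)/\priva^2=\tfrac{8M^2}{B_t^2}\tfrac{\ln(1/\privb)}{\priva^2}$, which is exactly $\sigma_t^2$. The crucial observation is that in iteration $t$ the first operator is evaluated at $u_{t-1}$ and the second at $w_t$, and both points are functions only of iterates produced from earlier, disjoint batches (together with the already-drawn, data-independent noise). Conditioning on the past transcript, each evaluation point is therefore fixed, so releasing $F_{\bB_t^1}(u_{t-1})+\boldsymbol{\xi}_t^1$ (resp.\ $F_{\bB_t^2}(w_t)+\boldsymbol{\xi}_t^2$) is precisely a Gaussian mechanism of sensitivity $2M/B_t$ and variance $\sigma_t^2$, hence $(\priva,\privb)$-DP in the batch it uses. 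The subsequent projection and affine update defining $w_t$ (resp.\ $u_t$) are data-independent given this noisy evaluation, so by post-processing the resulting iterate remains $(\priva,\privb)$-DP with respect to its batch.

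Finally, since the $2T$ mechanisms act on disjoint blocks and each is $(\priva,\privb)$-DP in its own block, parallel composition (Proposition~\ref{propos:parallel_comp}) gives that the full transcript $(w_t,u_t)_{t\in[T]}$ is $(\max_t\priva,\max_t\privb)=(\priva,\privb)$-DP. The returned average $\wb{w}^T$ is a deterministic post-processing of this transcript, so it inherits the same guarantee. The only point requiring genuine care is the adaptivity argument above: one must verify that each evaluation point is measurable with respect to the earlier (disjoint) batches and the already-sampled noise, so that every step is genuinely a Gaussian mechanism on a fresh block—this measurability is exactly what licenses the use of parallel, rather than the lossier adaptive, composition.
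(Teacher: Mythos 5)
Your proposal is correct and follows essentially the same route as the paper: the sensitivity bound $2M/B_t$ for each empirical operator evaluation, the matching of $\sigma_t^2$ to the Gaussian mechanism of Proposition~\ref{propos:Gauss_mech}, and parallel composition over the disjoint batches via Proposition~\ref{propos:parallel_comp} (whose statement already accommodates the adaptive dependence of each evaluation point on earlier blocks). Your extra care about measurability of the evaluation points is a reasonable elaboration of what the paper leaves implicit.
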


%\begin{proof}
%Notice that each computation that involves the data in Algorithm \ref{alg:alg1}  has sensitivity
%bounded by $2\gamma_t M/B_t$. 
%As a result, if $\sigma_{t+1}^2=8/[B_t^2\rho^2]$, then any step is
%$(\alpha,\alpha\rho^2/2)$-R\'enyi differentially private, w.r.t.~the batch used in that iteration. 
%Next, it is easy to see that given the noise of $\sigma_t^2$, the whole algorithm is $(\alpha,\alpha\rho^2/2)$-R\'enyi 
%differentially private; this follows from the data processing inequality for the R\'enyi divergence, and the fact that 
%batches are disjoint among iterations. Finally, by Proposition \ref{prop:RDP_to_DP}, for any $\delta>0$ the algorithm is 
%$(\alpha\rho^2/2+\ln(1/\delta)/[\alpha-1],\delta)$-DP, so setting $\alpha=1+2\ln(1/\delta)/\varepsilon$ and 
%$\rho=\varepsilon/\sqrt{1+\ln(1/\delta)}$ (recall $0<\varepsilon<1$), 
%we obtain $(\varepsilon,\delta)$-DP for the algorithm.
%\end{proof}

We now apply the previous results to obtain population risk bounds for DP-SVI by the \knseg\, method.
\begin{corollary}
Algorithm \ref{alg:alg1} with disjoint batches of size $B_t=B=\min\{\sqrt{d(\ln(1/\privb)}/\priva, n\}$, constant stepsize $\gamma_t\equiv\gamma=D/\big[M\sqrt{7T\big(1+\frac{8d}{B^2}\frac{\ln(1/\privb)}{\priva^2}\big)}\big]$ and variance 
$\sigma_t^2=\sigma^2=\frac{8M^2}{B^2}\frac{\ln(1/\privb)}{\priva^2}$ 
is $(\priva, \privb)$-differentially private and achieves $\VIgap{{\cal A}}{F}$ (for SVI) or $\SPgap{\A}{f}$ (for SSP) of
$$ O\Big(MD \max\Big\{ \frac{[d \ln(1/\privb)]^{1/4}}{\sqrt{n\priva}}, \frac{\sqrt{d\ln(1/\privb)}}{n\priva} \Big\}\Big). $$
%\begin{itemize}
%\item $MD\Big[ \frac{\min\{\sqrt d,n\}}{n^{2/3}}+\frac{(A+8C(1+\ln(1/\delta))/\varepsilon)}{M^2 n^{1/3}} \Big]$, if $b=\min\{\sqrt d, n\}$,
%\item $ MD\Big[ \frac{1}{n^{1/3}}+ \frac{A}{M^2n^{1/3}}+\frac{d}{n}\frac{8C\ln(1/\delta)}{M^2\varepsilon } \Big]$, if $b=n^{1/3}$;
%\end{itemize}
%where $A=[17M^2+2(LD)^2+6MLD+2M]$ and $C=[5+M+M^2+2MLD]$.
\end{corollary}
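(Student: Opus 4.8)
The plan is to assemble the corollary from the two ingredients already in place for the single-pass method: the population convergence bound of Theorem \ref{thm:dp_generalization_conv} and the privacy statement of Proposition \ref{prop:privacy_NSEG_step}. The privacy half is immediate. Since the batch sizes are held constant at $B_t\equiv B$ and the per-step variance is exactly $\sigma^2=\frac{8M^2}{B^2}\frac{\ln(1/\eta)}{\varepsilon^2}$, Proposition \ref{prop:privacy_NSEG_step} certifies $(\varepsilon,\eta)$-differential privacy with no further work. So the entire substance of the corollary is the risk bound, which I would obtain by specializing and then optimizing the quantity $K_0(T)/\Gamma_T$.

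For the accuracy bound I would first specialize Theorem \ref{thm:dp_generalization_conv} to constant stepsize and noise. Then $\Gamma_T=T\gamma$ and $K_0(T)=D^2+7T\gamma^2\!\left(M^2/2+d\sigma^2\right)$, so that $\VIgap{\A}{F}\le \frac{D^2}{T\gamma}+7\gamma\!\left(M^2/2+d\sigma^2\right)$. The key observation is that the Gaussian noise enters only through $d\sigma^2=M^2 c$, where $c:=\frac{8d\ln(1/\eta)}{B^2\varepsilon^2}$, so $M^2/2+d\sigma^2\le M^2(1+c)$. Substituting the prescribed $\gamma=D/\bigl[M\sqrt{7T(1+c)}\bigr]$ balances the two terms, each becoming $MD\sqrt{7(1+c)/T}$, which yields $\VIgap{\A}{F}=O\!\bigl(MD\sqrt{(1+c)/T}\bigr)$. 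The identical computation applies to $\SPgap{\A}{f}$ via the second half of Theorem \ref{thm:dp_generalization_conv}.

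Next I would fix the number of iterations and run a case analysis on $B$. Because this is the single-pass method with disjoint batches, each iteration consuming two batches of size $B$, one has $2BT\le n$ and hence $T=\Theta(n/B)$, giving $\VIgap{\A}{F}=O\!\bigl(MD\sqrt{(1+c)B/n}\bigr)$. Writing $\rho:=\sqrt{d\ln(1/\eta)}/\varepsilon$ and recalling $B=\min\{\rho,n\}$, there are two regimes. In the data-rich regime $\rho\le n$ we have $B=\rho$, which makes $c=8=\Theta(1)$; then the bound collapses to $O(MD\sqrt{B/n})=O\!\bigl(MD\,[d\ln(1/\eta)]^{1/4}/\sqrt{n\varepsilon}\bigr)$, the first term of the max. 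In the complementary regime $\rho>n$ (equivalently $n\varepsilon<\sqrt{d\ln(1/\eta)}$) a single pass cannot meaningfully run, so instead I would invoke the trivial bound $\VIgap{\A}{F}\le MD$, which holds by Cauchy–Schwarz together with boundedness and the diameter of $\W$ (and analogously for the SP gap); since $\frac{\sqrt{d\ln(1/\eta)}}{n\varepsilon}>1$ here, we get $MD\le MD\frac{\sqrt{d\ln(1/\eta)}}{n\varepsilon}$, which is exactly the second term of the max. Combining the two regimes gives the stated $O\!\bigl(MD\max\{[d\ln(1/\eta)]^{1/4}/\sqrt{n\varepsilon},\,\sqrt{d\ln(1/\eta)}/(n\varepsilon)\}\bigr)$.

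The computations here are essentially routine substitutions into cited results, so I do not expect a genuine obstacle; the only points requiring care are conceptual rather than technical. The first is recognizing that the chosen batch size $B=\rho$ forces the noise-inflation factor $c$ to be a universal constant, so that the Gaussian perturbation needed for privacy does \emph{not} degrade the rate in the data-rich regime. The second is the bookkeeping at the regime boundary and in the high-privacy/small-sample case $B=n$, which is handled cleanly by the trivial $O(MD)$ gap bound rather than by the convergence estimate.
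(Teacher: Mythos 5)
Your proposal is correct and follows essentially the same route as the paper: privacy from Proposition \ref{prop:privacy_NSEG_step}, then specializing Theorem \ref{thm:dp_generalization_conv} with the prescribed constant $\gamma$, using $n=2TB$ for the single-pass disjoint-batch schedule, and resolving the $\min$ in $B$ by cases. The only (cosmetic) difference is in the regime $\sqrt{d\ln(1/\privb)}/\priva>n$, where you invoke the trivial $MD$ gap bound while the paper simply substitutes $B=n$ into its expression $\frac{2\sqrt{14}MD\sqrt{B}}{\sqrt{n}}+\frac{8\sqrt{7}MD\sqrt{d\ln(1/\privb)}}{\sqrt{nB}\priva}$ and observes the second term dominates; both yield the stated bound.
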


\begin{remark}
Notice that in the corollary above, the gap is nontrivial iff $\sqrt{d\ln(1/\privb)}/[n\priva]<1$, which means that
the left hand side attains the max on the range where the gap is nontrivial.
\end{remark}

\begin{proof}
Consider a SVI or SSP problem. 
Let us recall that by Theorem \ref{thm:dp_generalization_conv}, Algorithm \ref{alg:alg1} achieves expected gap
$$ \frac{D^2}{\gamma T}+7M^2\gamma \Big(1+\frac{8d}{B^2}\frac{\ln(1/\privb)}{\priva^2} \Big).$$
Choosing $\gamma=D/\big[M\sqrt{7T\big(1+\frac{8d}{B^2}\frac{\ln(1/\privb)}{\priva^2}\big)}\big]$, we obtain 
%\dnote{Value of $\gamma$ different in statement and proof?}
an expected gap
$$ \frac{2\sqrt{7}MD}{\sqrt{T}}\Big(1+\frac{\sqrt{8d}}{B\priva}\sqrt{\ln(1/\privb)}\Big)
=\frac{2\sqrt{14}MD\sqrt{B}}{\sqrt{n}}+\frac{8\sqrt{7}MD\sqrt{d}}{\sqrt{nB}\priva}\sqrt{\ln(1/\privb)},$$
%\Big(1+\frac{\sqrt{8d}}{\sqrt{B}}\frac{\sqrt{1+\ln(1/\delta)}}{\varepsilon}\Big),$$
where we used that for a single-pass algorithm, $n=2TB$ (this choice of $T$ exhausts the data when disjoint batches are chosen). 

%\begin{itemize}
%\item 
Recalling that $B=\min\{\sqrt{d\ln(1/\privb)}/\priva, n\}$. Then the expected gap is bounded by
$$ O\Big(MD \max\Big\{ \frac{[d\ln(1/\privb)]^{1/4}}{\sqrt{n\priva}}, \frac{\sqrt{d\ln(1/\privb)}}{n\priva} \Big\}\Big). $$
%\end{itemize} 
%$$\frac{D^2}{\gamma T}+(A+Cd\sigma^2)\gamma=MD\Big[\frac{bn^{1/3}}{n}+\frac{(A+Cd\frac{8 \eta}{b^2})}{M^2n^{1/3}}\Big], $$
%$\eta=(1+\ln(1/\delta)/\varepsilon$; 
%and we used that $T=n/b$ and $\gamma=D/[Mn^{1/3}]$.\\
%If we set $b=\min\{\sqrt{d},n\}$, then the gap is upper bounded by
%$$ MD\Big[ \frac{\min\{\sqrt d,n\}}{n^{2/3}}+\frac{(A+8C\eta)}{M^2 n^{1/3}} \Big].$$
%If we set $b=n^{1/3}$, then the gap is upper bounded by
%$$ MD\Big[ \frac{1}{n^{1/3}}+ \frac{A}{M^2n^{1/3}}+\frac{8C\eta d}{M^2 n} \Big].$$ 
\end{proof}

We observe that excess risk bounds of the same order for DP-SCO based on noisy SGD and the uniform stability of differential privacy have 
been established \cite{Bassily:2014}. %\cnote{Digvijay: you said at some point the bound from this paper doesn't quite match ours. Can you please revise this? I checked, and they look the same.}
%\dnote{this is correct!}
 Improving these bounds in DP-SCO required substantial efforts, which was only achieved recently
\cite{Bassily:2019,Feldman:2020,BFGT:2020}. Furthermore, to the best of our knowledge, the upper bounds on the risk above are the first of their type for DP-SVI and DP-SSP, respectively. To improve upon them, we will follow the approach of \cite{BFGT:2020}, based on a multi-pass 
empirical error convergence, combined with \added{weak gap} generalization bounds based on uniform stability.%\dnote{Cristobal: can you check the reference talking about nonsmooth DP-SCO in $O(n^{3/2})$ time?} \cnote{It's in Theorem 4 here \url{https://arxiv.org/pdf/2103.01516.pdf}}\dnote{This is independent. We are still working in $\ell_2$ geometry and showing rate of $\wt{O}(n^{3/2})$.} \cnote{Their bound clearly holds for $\ell_2$ case as well}

\section{Stability of NSEG and Optimal Risk for DP-SVI and DP-SSP}\label{sec:stab_nseg}

The bounds established for DP-SVI are potentially suboptimal, and many of the past approaches used to 
attain optimal rates for DP-SCO, such as privacy amplification by iteration, phased regularization, etc. appear to 
encounter substantial barriers for their application to DP-SVI. In order to resolve this gap, we show
that for both DP-SVI and DP-SSP we can indeed obtain optimal rates,
which match those of DP-SCO. In order to achieve this we develop a {\em multi-pass} variant of the \knseg\, method, \added{which enjoys generalization performance due to its stability.}
%and use the results of the previous section to extrapolate its empirical performance to population risk bounds. 

\subsection{Stability of NSEG~method}

\label{subsec:stab_NSEG}

To analyze the stability of \knseg\, it is useful to interpret the extragradient method as an approximation of the proximal point 
algorithm. This connection has been established at least since \cite{Nemirovski:2004}. Given a monotone 
and $1$-Lipschitz operator $G:\bE\mapsto\bE$, 
%\dnote{Would it be better to not use $F$ here since that is part of the notation of main problem statement?}, 
we define the $s$-extragradient
operator inductively as follows. First, $\T{0}{G}{\cdot}:\bE\mapsto\bE$ is defined as $\T{0}{G}{u}= \proj_{\W}(u)$. Then, for
$s\geq 0$
\begin{equation} \label{eqn:extrapolation}
 \T{s+1}{G}{u}= \proj_{\W}(u- G(\T{s}{G}{u})).
\end{equation}
%When $F$ is clear from context we will omit them from the notation above.
Given such operator, the (deterministic) extragradient method \cite{Korpelevich:1976} 
corresponds to, 
starting from $u_0\in \W$, iterating
%$k$-extrapolation method is defined by starting from $x^0\in\bE$, and iterating
$$ u_{t+1} = \T{2}{\gamma F}{u_t} \qquad(\forall t\in[T-1]).$$
%Notice that the $2$-extrapolation algorithm coincides with the extragradient method of Korpelevich 
%\cite{Korpelevich:1976}. Our goal now is to analyze the performance of $k$-extrapolation in the 
%stochastic VI setting.

It is known that if $G$ is contractive, the recursion \eqref{eqn:extrapolation} leads to
a fixed point $\T{}{G}{u}$, satisfying
\begin{equation} \label{eqn:fp_extrapolation} 
\T{}{G}{u}= \proj_{\W}(u- G(\T{}{G}{u})). 
\end{equation}
It is also easy to see that $\T{}{G}{\cdot}:\bE\mapsto\W$ is nonexpansive.

%\begin{lemma} \label{lem:k_extrap_convergence}
%If $F$ is $L$-Lipschitz and $\gamma>0$, then for all $u\in\U$,
%$$ \|\T{s}{\gamma F}{u} -\T{}{\gamma F}{u} \| \leq (\gamma L)^s \|u-\T{}{\gamma F}{u}\|.$$
%\end{lemma}
%\begin{proof} By induction on $s$
%\begin{eqnarray*}
%\|\T{s}{\gamma F}{u} -\T{}{\gamma F}{u} \| &\leq& \|\proj_{\U} (u- \gamma F(\T{s-1}{\gamma F}{u}))- \proj_{\U}(u-\gamma F(\T{}{\gamma F}{u}))\| \\
% &\leq&  \|\gamma F(\T{s-1}{\gamma F}{u})-\gamma F(\T{}{\gamma F}{u})\| \,\,\leq\,\,  \gamma L\|\T{s-1}{\gamma F}{u}-\T{}{\gamma F}{u}\| \\
% &\leq& (\gamma L)^s\|u-\T{}{\gamma F}{u}\|.
%\end{eqnarray*}
%\end{proof}

%\begin{lemma}[Near non-expansiveness of the extragradient operator] If $F$ is $L$-Lipschitz, then for all $u,v\in\U$
%$$ \|\T{s}{\gamma F}{u} -\T{s}{\gamma F}{v} \| \leq \|u-v\|+(\gamma L)^s D.$$
%\end{lemma}

%\begin{proof}
%\begin{eqnarray*}
%\|T_k x -T_k y \| &\leq& \|T_k x-Tx\| +\|Tx-Ty\|+\|Ty-T_k y \| \\
% &\leq& \|x-y\|+(\eta L)^k\|x-Tx\|+(\eta L)^k\|y-Ty\|\\
% &\leq& \|x-y\|+2(\eta L)^k D,
%\end{eqnarray*}
%where in the first inequality we used the nonexpansiveness of
%$\T{}{\gamma F}{\cdot}$ and Lemma \ref{lem:k_extrap_convergence}.
%\end{proof}

\begin{proposition}[Near nonexpansiveness of the extragradient operator] \label{lem:near_nonexp_fxpt}
Let $F\in {\cal M}_{\W}^1(L,M)$ and $\W\subseteq \bE$ compact convex set with diameter $D>0$. Then, for all $s$
nonnegative integer, and $u,v\in\bE$,
\begin{equation} \label{eqn:fixed_pt_conv}
\| \T{s}{\gamma F}{u}- \T{}{\gamma F}{u}\| \leq 
(\gamma L)^s\|\T{0}{\gamma F}{u}-\T{}{\gamma F}{u} \|
\end{equation}
and
\begin{equation} \label{eqn:near_nonexp_fixed_pt}
\| \T{s}{\gamma F}{u}- \T{s}{\gamma F}{v}\| \leq \|u-v\|+2D(\gamma L)^s.
\end{equation}
\end{proposition}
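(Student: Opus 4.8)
The plan is to prove the two inequalities in turn, with the first doing most of the work and the second following by a short triangle-inequality argument. Throughout I assume $\gamma L<1$, so that $\gamma F$ is contractive and the fixed point $\T{}{\gamma F}{u}$ of \eqref{eqn:fp_extrapolation} exists (and is unique) for every $u$; this is precisely the regime in which the stated bounds $(\gamma L)^s$ are meaningful.

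First I would establish \eqref{eqn:fixed_pt_conv}. Fix $u\in\bE$ and consider the map $\Phi_u(w):=\proj_{\W}(u-\gamma F(w))$, so that by \eqref{eqn:extrapolation} we have $\T{s+1}{\gamma F}{u}=\Phi_u(\T{s}{\gamma F}{u})$, while by \eqref{eqn:fp_extrapolation} the point $\T{}{\gamma F}{u}$ is a fixed point of $\Phi_u$. Since the Euclidean projection $\proj_{\W}$ is nonexpansive and $F$ is $L$-Lipschitz, $\Phi_u$ is a $\gamma L$-contraction: $\|\Phi_u(w_1)-\Phi_u(w_2)\|\le\gamma\|F(w_1)-F(w_2)\|\le \gamma L\|w_1-w_2\|$. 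Applying this with $w_1=\T{s}{\gamma F}{u}$ and $w_2=\T{}{\gamma F}{u}$ yields the one-step bound $\|\T{s+1}{\gamma F}{u}-\T{}{\gamma F}{u}\|\le \gamma L\,\|\T{s}{\gamma F}{u}-\T{}{\gamma F}{u}\|$, and a straightforward induction on $s$ gives \eqref{eqn:fixed_pt_conv}.

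Next I would derive \eqref{eqn:near_nonexp_fixed_pt} by inserting the two fixed points and using the triangle inequality:
\[
\|\T{s}{\gamma F}{u}-\T{s}{\gamma F}{v}\|\le \|\T{s}{\gamma F}{u}-\T{}{\gamma F}{u}\|+\|\T{}{\gamma F}{u}-\T{}{\gamma F}{v}\|+\|\T{}{\gamma F}{v}-\T{s}{\gamma F}{v}\|.
\]
The middle term is at most $\|u-v\|$ because $\T{}{\gamma F}{\cdot}$ is nonexpansive, as noted just before the proposition. For the two outer terms I apply \eqref{eqn:fixed_pt_conv} and then bound $\|\T{0}{\gamma F}{u}-\T{}{\gamma F}{u}\|\le D$ and $\|\T{0}{\gamma F}{v}-\T{}{\gamma F}{v}\|\le D$, using that both $\T{0}{\gamma F}{\cdot}=\proj_{\W}(\cdot)$ and the fixed point $\T{}{\gamma F}{\cdot}$ take values in $\W$, which has diameter $D$. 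Combining the three bounds gives $\|\T{s}{\gamma F}{u}-\T{s}{\gamma F}{v}\|\le \|u-v\|+2D(\gamma L)^s$, as claimed.

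The argument is essentially routine once the contraction viewpoint is adopted; there is no genuine obstacle. The only points requiring care are the existence and uniqueness of the fixed point $\T{}{\gamma F}{u}$ (which is exactly where contractiveness $\gamma L<1$ enters), and the replacement of the a priori uncontrolled quantity $\|\T{0}{\gamma F}{u}-\T{}{\gamma F}{u}\|$ by the diameter $D$, justified by the observation that both the $s=0$ iterate and the fixed point lie in $\W$.
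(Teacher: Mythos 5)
Your proposal is correct and follows essentially the same route as the paper: induction on $s$ using the nonexpansiveness of $\proj_{\W}$ and the $L$-Lipschitzness of $F$ for \eqref{eqn:fixed_pt_conv}, then the triangle inequality through the fixed points, nonexpansiveness of $\T{}{\gamma F}{\cdot}$, and the bound $\|\T{0}{\gamma F}{\cdot}-\T{}{\gamma F}{\cdot}\|\leq D$ for \eqref{eqn:near_nonexp_fixed_pt}. The only cosmetic difference is that you package the inductive step as a contraction property of the map $\Phi_u$, which the paper writes out inline.
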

\begin{proof}
The first part, eqn.~\eqref{eqn:fixed_pt_conv}, is proved by induction on $s$. The result clearly holds for $s=0$, and if $s\geq 1$, we use \eqref{eqn:extrapolation} and \eqref{eqn:fp_extrapolation} to obtain
\begin{eqnarray*}
\| \T{s}{\gamma F}{u} - \T{}{\gamma F}{u} \| &=& \|\proj_{\W} (u-\gamma F(\T{s-1}{\gamma F}{u})- \proj_{\W}(u-\gamma F(\T{}{\gamma F}{u}))\| \\
 &\leq&  \gamma\| F(\T{s-1}{\gamma F}{u})-F( \T{}{\gamma F}{u} )\| \,\,\leq\,\,  \gamma L\| \T{s-1}{\gamma F}{u}- \T{}{\gamma F}{u} \| \\
 &\leq& (\gamma L)^s\| \T{0}{\gamma F}{u}-\T{}{\gamma F}{u}\|,
\end{eqnarray*}
where in the first inequality we used the nonexpansiveness of
the projection operator, next we used the $L$-Lipschitzness of $F$, and finally we used the inductive hypothesis to conclude. 

The second part, eqn.~\eqref{eqn:near_nonexp_fixed_pt}, is a direct consequence of \eqref{eqn:fixed_pt_conv}, the triangle inequality, and that $\T{0}{\gamma F}{u}$, $\T{}{\gamma F}{u}$, $\T{0}{\gamma F}{v}$, $\T{}{\gamma F}{v}\in \W$.
\end{proof}

The next lemma shows an expansion upper bound for extragradient iterations. This type 
of bound will be later used to establish the uniform argument stability of the NSEG algorithm.
%The main takeaway of the next result is that, despite the fact that the intermediate iterates  of the SEG, $w^t$ (this is because they only perform a single fixed-point iteration, as opposed to the two fixed-point iterations performed by $u^t$), this is not a problem: the stability of $u^t$ suffices to ultimately bound the stability of the whole algorithm.
\begin{lemma}[Expansion of the extragradient iteration] \label{lem:expansion_EG}
Let $F_1,F_2:\bE\mapsto\bE$ monotone $L$-Lipschitz operators, 
and $0\leq \gamma< 1/L$. 
Let $u,v\in \W$, and $w,z,u^{\prime},v^{\prime}\in \W$ such that 
\begin{eqnarray*}
w &=& \proj_{\W}(u-\gamma F_1(u)) %\T{k-1}{\gamma F_1}{u} 
\qquad\qquad\,
z \,\,\,=\,\,  \proj_{\W}(v-\gamma F_1(v))  \\%\T{k-1}{\gamma F_1}{v}\\
u^{\prime} &=&  \proj_{\W}(u-\gamma F_2(w)) % \prox_{\gamma g}(u-\gamma F_2(w))
\qquad\qquad v^{\prime} \,\,=\,\,  \proj_{\W}(v-\gamma F_2(z)) .%\prox_{\gamma g}(v-\gamma F_2(z)).
\end{eqnarray*}
Then, 
%\begin{equation} \label{eqn:exp_bd_delta}
%\|w-z\| \leq \|u-v\|+2\gamma L D.
%\end{equation}
%On the other hand, 
\begin{eqnarray}
%\mbox{ If $F_1=F_2$ } & & \|u^{\prime}-v^{\prime}\| \leq\|u-v\|+2L^2D \gamma^2  \label{eqn:exp_bd_F1eqF2}\\ 
%\mbox{ If $F_1\neq F_2$ } & & 
\|w-z\| &\leq& \|u-v\|+2 L D\gamma,  \label{eqn:exp_bd_delta}\\
\|u^{\prime}-v^{\prime}\| &\leq& \|u-v\|+(\M_1+\M_2+2LD)L \gamma^2, \label{eqn:exp_bd_F1neqF2}
\end{eqnarray}
where $\M_1\triangleq \|F_1(\T{}{\gamma F_1}{u})-F_2(\T{}{\gamma F_2}{u})\|$ and $\M_2\triangleq \|F_1(\T{}{\gamma F_1}{v})-F_2(\T{}{\gamma F_2}{v})\|$.
\end{lemma}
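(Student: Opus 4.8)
The plan is to prove the two expansion bounds by reading off $w,z$ and $u',v'$ as extragradient iterates and anchoring the analysis on the exact (nonexpansive) resolvent of $\gamma F_2$.

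First I would dispatch \eqref{eqn:exp_bd_delta}. Since $u,v\in\W$ we have $\T{0}{\gamma F_1}{u}=\proj_\W(u)=u$, so that $w=\proj_\W(u-\gamma F_1(u))=\T{1}{\gamma F_1}{u}$, and likewise $z=\T{1}{\gamma F_1}{v}$. The bound $\|w-z\|\le\|u-v\|+2LD\gamma$ is then immediate from the near-nonexpansiveness estimate \eqref{eqn:near_nonexp_fixed_pt} of Proposition \ref{lem:near_nonexp_fxpt} applied with $s=1$ to $F_1$ (its proof only uses monotonicity and $L$-Lipschitzness, so the lack of an a priori boundedness constant is harmless here).

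For \eqref{eqn:exp_bd_F1neqF2}, the key idea is to compare $u'$ and $v'$ against the exact resolvents $\T{}{\gamma F_2}{u}$ and $\T{}{\gamma F_2}{v}$, which exist because $\gamma L<1$ makes $\gamma F_2$ contractive, and which satisfy $\|\T{}{\gamma F_2}{u}-\T{}{\gamma F_2}{v}\|\le\|u-v\|$ by nonexpansiveness of the resolvent. Inserting these via the triangle inequality reduces the task to bounding the two approximation errors $\|u'-\T{}{\gamma F_2}{u}\|$ and $\|v'-\T{}{\gamma F_2}{v}\|$. I would treat the first (the second being symmetric): since $u'=\proj_\W(u-\gamma F_2(w))$ while the resolvent satisfies $\T{}{\gamma F_2}{u}=\proj_\W(u-\gamma F_2(\T{}{\gamma F_2}{u}))$, nonexpansiveness of $\proj_\W$ together with $L$-Lipschitzness of $F_2$ gives $\|u'-\T{}{\gamma F_2}{u}\|\le \gamma L\,\|w-\T{}{\gamma F_2}{u}\|$.

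It then remains to bound $\|w-\T{}{\gamma F_2}{u}\|=\|\T{1}{\gamma F_1}{u}-\T{}{\gamma F_2}{u}\|$, which is exactly where $\M_1$ surfaces. I would insert the resolvent $\T{}{\gamma F_1}{u}$ of the \emph{correct} operator and split into two pieces: the convergence error $\|\T{1}{\gamma F_1}{u}-\T{}{\gamma F_1}{u}\|\le \gamma L D$ (from \eqref{eqn:fixed_pt_conv} with $s=1$, using $\T{0}{\gamma F_1}{u},\T{}{\gamma F_1}{u}\in\W$), and the operator-mismatch term $\|\T{}{\gamma F_1}{u}-\T{}{\gamma F_2}{u}\|$, which by subtracting the two fixed-point identities and applying nonexpansiveness of $\proj_\W$ equals at most $\gamma\,\M_1$. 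Combining yields $\|u'-\T{}{\gamma F_2}{u}\|\le \gamma L(\gamma L D+\gamma\M_1)=\gamma^2 L(LD+\M_1)$, and symmetrically $\|v'-\T{}{\gamma F_2}{v}\|\le\gamma^2 L(LD+\M_2)$; adding these to $\|u-v\|$ gives $\|u'-v'\|\le\|u-v\|+\gamma^2 L(\M_1+\M_2+2LD)$, as claimed. The main obstacle is choosing this decomposition: comparing $u'$ naively with $\T{2}{\gamma F_2}{u}$ produces the wrong mismatch term $\|F_1(u)-F_2(u)\|$, whereas anchoring at the fixed points is precisely what makes the required quantities $\M_1,\M_2$ appear.
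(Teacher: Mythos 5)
Your proof is correct and follows essentially the same route as the paper's: part \eqref{eqn:exp_bd_delta} is the triangle-inequality argument underlying \eqref{eqn:near_nonexp_fixed_pt}, and part \eqref{eqn:exp_bd_F1neqF2} uses the identical anchoring at the nonexpansive fixed points $\T{}{\gamma F_2}{u},\T{}{\gamma F_2}{v}$, the same reduction $\|u'-\T{}{\gamma F_2}{u}\|\le \gamma L\|w-\T{}{\gamma F_2}{u}\|$, and the same split of $\|w-\T{}{\gamma F_2}{u}\|$ into the convergence error $\gamma LD$ plus the mismatch $\gamma\M_1$. No substantive difference.
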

\begin{proof}
By definition of $w$ and $z$, we have,
\begin{eqnarray*}
\|w-z\| &= & \| \proj_{\W}(u-\gamma F_1(u)) -\proj_{\W}(v-\gamma F_1(v))  \| \\ 
&\leq &\| \T{}{\gamma F_1}{u}- \T{}{\gamma F_1}{v}\|+\| \proj_{\W}(u-\gamma F_1(u))  - \T{}{\gamma F_1}{u}\|\\
&&+\| \proj_{\W}(v-\gamma F_1(v))  - \T{}{\gamma F_1}{v} \| \\
&\leq&\|u-v\|+ \gamma L [\| \T{0}{\gamma F_1}{u} - \T{}{\gamma F_1}{u} \|+\| \T{0}{\gamma F_1}{v}-T{}{\gamma F_1}{v}\|],
%2\gamma \|F(T_{k-2}^{\gamma F}(u))-F(T_{k-2}^{\gamma F}(v))\| \\
%&\leq& \|u-v\|+2(\gamma L)\|T_{k-2}^{\gamma F}(u)-T^{\gamma F}(v)\| \\
%&\leq&\|u-v\|+2 (\gamma L)^{k-1} \|T_{0}^{\gamma F}(u)-T_{0}^{\gamma F}(v)\|,
\end{eqnarray*}
where we used the nonexpansiveness of the operator
$\T{}{\gamma F_1}{\cdot }$ and Proposition \ref{lem:near_nonexp_fxpt}. Moreover, since $u,v,\T{0}{\gamma F_1}{u}, \T{0}{\gamma F_1}{v} \in\W$, we have $\|w-z\|\leq \|u-v\|+2LD\gamma$, proving \eqref{eqn:exp_bd_delta}. 

Next, to prove \eqref{eqn:exp_bd_F1neqF2}, we proceed as follows:
\begin{eqnarray*}
\|u^{\prime}-v^{\prime}\| 
&= & \|\proj_{\W}(u-\gamma F_2(w))-\proj_{\W}(v-\gamma F_2(z))\|\\
&\leq& \| \T{}{\gamma F_2}{u} - \T{}{\gamma F_2}{v}\|+
 \|\proj_{\W}(u-\gamma F_2(w))-\proj_{\W}(u-\gamma F_2(\T{}{\gamma F_2}{u}))\| \\
 && + \|\proj_{\W}(v-\gamma F_2(z))-\proj_{\W}(v-\gamma F_2(\T{}{\gamma F_2}{v}))\|\\
&\leq& \|u-v\|+\gamma L\|w- \T{}{\gamma F_2}{u}\|
+\gamma L \|z- \T{}{\gamma F_2}{v}\|.
\end{eqnarray*}
Using again Proposition \ref{lem:near_nonexp_fxpt}, we have that
\begin{eqnarray*}
 \|w- \T{}{\gamma F_2}{u} \|
 &=& \| \T{1}{\gamma F_1}{u}- \T{}{\gamma F_2}{u} \| \\ &\leq& \| \T{1}{\gamma F_1}{u}-\T{}{\gamma F_1}{u} \| + \| \T{}{\gamma F_1}{u} - \T{}{\gamma F_2}{u} \|  \\
&\leq&  L D\gamma+\|\proj_{\W}\big(u-\gamma F_1( \T{}{\gamma F_1}{u})\big)-\proj_{\W}\big(u-\gamma F_2( \T{}{\gamma F_2}{u})\big)\| \\
&\leq&  L D\gamma+\gamma \|F_1(\T{}{\gamma F_1}{u})-F_2(\T{}{\gamma F_2}{u})\|\\
&\leq&  L D\gamma+\M_1\gamma.
\end{eqnarray*}
An analog bound can be obtained for $\|z-\T{}{\gamma F_2}{v}\|$:
$$ \|z-\T{}{\gamma F_2}{v}\|\leq LD\gamma+\M_2\gamma, $$ 
concluding the claimed bound \eqref{eqn:exp_bd_F1neqF2}:
$$ \|u^{\prime}-v^{\prime}\|\leq \|u-v\|+ L[\M_1+\M_2+ 2LD]\gamma^2.$$
\end{proof}

The Expansion Lemma above allows us to bound how much would two trajectories of the \knseg\, method may deviate, given two pairs of sequences of operators $F_{1,t},F_{2,t}$ and $F_{1,t}^{\prime},F_{2,t}^{\prime}$. 
The bounds we will obtain from this analysis will give us direct bounds on the UAS 
for the NSEG\, method.

\begin{lemma} \label{lem:stab_EG}
%Let $\W\subseteq \bE$ have diameter $D$, 
Let $F_{1,t},F_{2,t}$ and $F_{1,t}^{\prime},F_{2,t}^{\prime}$ be $L$-Lipschitz operators,
%operators in ${\cal M}_{\W}^1(L, M)$, 
and $0\leq \gamma_t< 1/L$ for all $t\in[T]$. Let 
$\{(u_t,w_t)\}_{t\in[T]}$ and $\{(v_t,z_t)\}_{t\in[T]}$ be the sequences resulting from Algorithm \ref{alg:alg1}, with operators $\{(F_{1,t},F_{2,t})\}_{t\in[T]}$ and $\{(F_{1,t}^{\prime},F_{2,t}^{\prime})\}_{t\in[T]}$, respectively; and starting from $u^0=v^0$. Let 
\begin{eqnarray*}
\Delta_{1,t} &\triangleq& \sup_{u\in\W}\|F_{1,t}(u)-F_{1,t}^{\prime}(u)\|, \\
\Delta_{2,t}  &\triangleq&\sup_{u\in\W}\|F_{2,t}(u)-F_{2,t}^{\prime}(u)\|,\\
\M_{1,t}       &\triangleq& \|F_{1,t}(\T{}{\gamma F_{1,t}}{u_{t-1}})-F_{2,t}(\T{}{\gamma F_{2,t}}{u_{t-1}})\|, \mbox{ and } \\
\M_{2,t}       &\triangleq& \|F_{1,t}(\T{}{\gamma F_{1,t}}{v_{t-1}})-F_{2,t}(\T{}{\gamma F_{2,t}}{v_{t-1}})\|;
\end{eqnarray*}
then, for al $t=0,\ldots,T$,
%\begin{enumerate}
%\item[(i)] If $F_{1,t}=F_{2,t}$ and $F_{1,t}^{\prime}=F_{2,t}^{\prime}$ for all $t\in[T-1]$:
%\begin{eqnarray}
% \nu_t &\triangleq& \|u^t-v^t\| \,\, \leq \,\, 2D\sum_{s=0}^{t-1}(\gamma_s L)^{k}+2L\sum_{s=0}^{t-1} \gamma_s\Delta_{1,s}; \label{eqn:bd_nu_F1eqF2}\\
%\delta_t &\triangleq& \|w^t-z^t\| \,\,\leq\,\,  2D\sum_{s=0}^{t-1}(\gamma_s L)^{k}+2D(\gamma_tL)^{k-1}+2L\sum_{s=0}^{t}\gamma_s\Delta_{1,s}.\label{eqn:bd_delta_F1eqF2}
%\end{eqnarray}
%\item[(ii)] In general:
\begin{eqnarray}
 \nu_t \triangleq \|u_t-v_t\| &\leq& \sum_{s=1}^{t}\Big( [\M_{1,t}+\M_{2,t}+2LD]L\gamma_s^2+L\Delta_{1,s}\gamma_s^2+\Delta_{2,s}\gamma_s\Big)
 %\sum_{s=0}^{t-1} \Big( [4M+2LD]L\gamma_s^2+2(L\gamma_s)^2\Delta_{1,s}+2L\gamma_s\Delta_{2,s} \Big); 
 \label{eqn:bd_nu_F1neqF2}\\
\delta_t \triangleq  \|w_t-z_t\| &\leq&  \sum_{s=1}^{t-1}\Big( [\M_{1,t}+\M_{2,t}+2LD]L\gamma_s^2+L\Delta_{1,s}\gamma_s^2+\Delta_{2,s}\gamma_s\Big)
+\Delta_{1,t}\gamma_t+2LD\gamma_t.
%\sum_{s=0}^{t-1}\Big( [4M+2LD]L\gamma_s^2+\Delta_{2,s}\gamma_s\Big)
%+ \sum_{s=0}^{t}L\Delta_{1,s}\gamma_s^2 +2LD\gamma_t
\label{eqn:bd_delta_F1neqF2}
\end{eqnarray}
%\end{enumerate}
\end{lemma}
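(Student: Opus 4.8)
The plan is to establish a one-step recursion for the pair $(\nu_t,\delta_t)$ and then telescope it, using $\nu_0=0$ (since $u_0=v_0$). The difficulty is that the Expansion Lemma (Lemma~\ref{lem:expansion_EG}) only controls the spreading of two extragradient trajectories driven by the \emph{same} operator pair, whereas here the two trajectories use $(F_{1,t},F_{2,t})$ and $(F_{1,t}^{\prime},F_{2,t}^{\prime})$. I would resolve this with a hybrid argument: at each step I insert an auxiliary iterate that starts from $v_{t-1}$ but is driven by the unprimed operators $(F_{1,t},F_{2,t})$, so that the deviation splits into an ``expansion'' part (same operators, different starting points), handled by Lemma~\ref{lem:expansion_EG}, and a ``mismatch'' part (same starting point, different operators), handled by nonexpansiveness of $\proj_{\W}$ together with the definitions of $\Delta_{1,t},\Delta_{2,t}$.

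For the half-step bound \eqref{eqn:bd_delta_F1neqF2}, I introduce $\tilde z_t := \proj_{\W}(v_{t-1}-\gamma_t F_{1,t}(v_{t-1}))$. Then $\|w_t-\tilde z_t\|\le \nu_{t-1}+2LD\gamma_t$ by \eqref{eqn:exp_bd_delta} (both iterates use $F_{1,t}$, from $u_{t-1}$ and $v_{t-1}$ respectively), while nonexpansiveness of the projection gives $\|\tilde z_t-z_t\|\le \gamma_t\|F_{1,t}(v_{t-1})-F_{1,t}^{\prime}(v_{t-1})\|\le \gamma_t\Delta_{1,t}$. Summing yields $\delta_t\le \nu_{t-1}+2LD\gamma_t+\Delta_{1,t}\gamma_t$, which becomes \eqref{eqn:bd_delta_F1neqF2} after substituting the bound for $\nu_{t-1}$ obtained below.

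For the full-step bound \eqref{eqn:bd_nu_F1neqF2}, I define the hybrid full step $\tilde v_t := \proj_{\W}(v_{t-1}-\gamma_t F_{2,t}(\tilde z_t))$, which together with $\tilde z_t$ is exactly the extragradient step driven by $(F_{1,t},F_{2,t})$ started from $v_{t-1}$. Applying Lemma~\ref{lem:expansion_EG}, eqn.~\eqref{eqn:exp_bd_F1neqF2}, with $u=u_{t-1}$, $v=v_{t-1}$ and $(F_1,F_2)=(F_{1,t},F_{2,t})$ gives $\|u_t-\tilde v_t\|\le \nu_{t-1}+(\M_{1,t}+\M_{2,t}+2LD)L\gamma_t^2$, where $\M_{1,t},\M_{2,t}$ coincide exactly with the lemma's $\M_1,\M_2$. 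For the mismatch part I use nonexpansiveness of $\proj_{\W}$ and the triangle inequality,
\[
\|\tilde v_t-v_t\|\le \gamma_t\|F_{2,t}(\tilde z_t)-F_{2,t}^{\prime}(z_t)\|\le \gamma_t L\|\tilde z_t-z_t\|+\gamma_t\Delta_{2,t}\le L\Delta_{1,t}\gamma_t^2+\Delta_{2,t}\gamma_t,
\]
where the last step reuses $\|\tilde z_t-z_t\|\le \gamma_t\Delta_{1,t}$ and the $L$-Lipschitzness of $F_{2,t}$. Combining the two pieces gives the one-step increment $\nu_t\le \nu_{t-1}+(\M_{1,t}+\M_{2,t}+2LD)L\gamma_t^2+L\Delta_{1,t}\gamma_t^2+\Delta_{2,t}\gamma_t$.

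Finally, I telescope this recursion down to $\nu_0=0$ to obtain \eqref{eqn:bd_nu_F1neqF2}, and feed the resulting bound for $\nu_{t-1}$ into the half-step inequality to obtain \eqref{eqn:bd_delta_F1neqF2}. I expect the main obstacle to be the bookkeeping of the hybrid decomposition: making sure the $\M$-terms are matched to the Expansion Lemma's fixed-point quantities (so that no extra error is incurred there) while all operator-mismatch contributions are cleanly absorbed into the $\Delta$-terms, and that the half-step deviation $\|\tilde z_t-z_t\|$ propagates into the full step through a single factor of $L$, producing precisely the $L\Delta_{1,t}\gamma_t^2$ term.
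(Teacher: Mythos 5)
Your proposal is correct and follows essentially the same route as the paper: the same hybrid decomposition through the intermediate iterates $\T{1}{\gamma_t F_{1,t}}{v_{t-1}}$ and $\proj_{\W}(v_{t-1}-\gamma_t F_{2,t}(\T{1}{\gamma_t F_{1,t}}{v_{t-1}}))$, with Lemma \ref{lem:expansion_EG} handling the expansion part, projection nonexpansiveness plus Lipschitzness absorbing the operator mismatch into the $\Delta$-terms, and telescoping from $\nu_0=0$. The only cosmetic difference is that you bound the mismatch in one triangle-inequality step, $\|F_{2,t}(\tilde z_t)-F_{2,t}^{\prime}(z_t)\|\le L\|\tilde z_t-z_t\|+\Delta_{2,t}$, whereas the paper splits it across two successive intermediate projections; the resulting recursion is identical.
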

\begin{proof}
Clearly, $\nu_0=0$. Let us now derive a recurrence for both $\nu_t$ and $\delta_t$. 
\begin{eqnarray*}
\delta_{t} &=& \| \T{1}{\gamma_t F_{1,t}}{u_{t-1}} - \T{1}{\gamma_t F_{1,t}^{\prime}}{v_{t-1}} \|  \\
&\leq&  \| \T{1}{\gamma_t F_{1,t}}{u_{t-1}} - \T{1}{\gamma_t F_{1,t}}{v_{t-1}} \|+
\| \T{1}{\gamma_t F_{1,t}}{v_{t-1}} - \T{1}{\gamma_t F_{1,t}^{\prime}}{v_{t-1}} \| \\
&\leq& \|u_{t-1}-v_{t-1}\|+2LD\gamma_t + \| \T{1}{\gamma_t F_{1,t}}{v_{t-1}} - \T{1}{\gamma_t F_{1,t}^{\prime}}{v_{t-1}}\|,
\end{eqnarray*}
where in the last inequality we used inequality \eqref{eqn:exp_bd_delta}.
Let us bound now the rightmost term above,
\begin{eqnarray}
\| \T{1}{\gamma_t F_{1,t}}{v_{t-1}} - \T{1}{\gamma_t F_{1,t}^{\prime}}{v_{t-1}} \|
 &=& \|\proj_{\W}(v_{t-1}-\gamma_t F_{1,t}(v_{t-1}))-\proj_{\W}(v_{t-1}-\gamma_t F_{1,t}^{\prime}(v_{t-1}))\|  \notag\\
 &\leq&\gamma_t \| F_{1,t}(v_{t-1})- F_{1,t}^{\prime}(v_{t-1})\| \notag \\
 &\leq & \Delta_{1,t}\gamma_t.
\label{eqn:rec_bd_stab}
\end{eqnarray}
%where we used inductively the definition of $\Delta_{1,t}$, the $L$-Lipschitzness of $F_{1,t}$, and the nonexpansiveness of the proximal operator, and in the last step we used that since $\gamma_t\leq 1/[2L]$, then  $ \sum_{j=0}^{k-1}(\gamma_tL)^j\leq  \sum_{j\geq0}2^{-j} = 2$. 
We conclude that 
\begin{equation} \label{eqn:bd_delta}
\delta_{t} \leq \nu_{t-1} + 2LD\gamma_t + \Delta_{1,t}\gamma_t.
\end{equation}

%\begin{enumerate}
%\item[(i)] In the case $F_{1,t}=F_{2,t}$ and $F_{1,t}^{\prime} = F_{2,t}^{\prime}$ we can proceed analogously as above,
%obtaining
%\begin{eqnarray*}
%\nu_{t+1} &=& \|u^{t+1}-v^{t+1}\| = \| \T{k}{\gamma_t F_{1,t}}{u^t} - \T{k}{\gamma_t F_{1,t}^{\prime}}{v^t} \| \\
%&\leq&\nu_t+2(\gamma_t L)^{k}D+2\gamma_t \Delta_{1,t}\\
%\Longrightarrow \quad \nu_t &\leq& 2D\sum_{s=0}^{t-1}(\gamma_s L)^{k}+2\sum_{s=0}^{t-1} \gamma_s\Delta_{1,s}.
%\end{eqnarray*}
%Now, using the bound \eqref{eqn:bd_delta}, we conclude that:
%$$ \delta_t \leq 2D\sum_{s=0}^{t-1}(\gamma_s L)^{k}+2D(\gamma_tL)^{k-1}+2\sum_{s=0}^{t}\gamma_s\Delta_{1,s}.$$

%\item[(ii)] For the case $F_{1,t}\neq F_{2,t}$ or $F_{1,t}^{\prime}\neq F_{2,t}^{\prime}$ we proceed as follows:
Now,
\begin{align*}
\nu_{t} =& \|u_{t}-v_{t}\| \leq \|\proj_{\W}(u_{t-1}-\gamma_t F_{2,t}(w_t))-\proj_{\W}(v_{t-1}-\gamma_t F_{2,t}^{\prime}(z_t))\| \notag \\
=& \|\proj_{\W}(u_{t-1}-\gamma_t F_{2,t}(w_{t}))-\proj_{\W}(v_{t-1}-\gamma_t F_{2,t}(z_t))\|\notag \\
&+\|\proj_{\W}(v_{t-1}-\gamma_t F_{2,t}(z_t))-\proj_{\W}(v_{t-1}-\gamma_t F_{2,t}^{\prime}(z_t))\| \notag\\
\leq& \|\proj_{\W}(u_{t-1}-\gamma_t F_{2,t}( \T{1}{\gamma_t F_{1,t}}{u_{t-1}}))-\proj_{\W}(v_{t-1}-\gamma_t F_{2,t}(\T{1}{\gamma_t F_{1,t}^{\prime}}{v_{t-1}}))\| \notag\\
&+\gamma_t \|F_{2,t}(z_t)-F_{2,t}^{\prime}(z_t)\| \notag \\
\leq&\|\proj_{\W}(u_{t-1}-\gamma_t F_{2,t}(\T{1}{\gamma_t F_{1,t}}{u_{t-1}}))-\proj_{\W}(v_{t-1}-\gamma_t F_{2,t}(\T{1}{\gamma_t F_{1,t}}{v_{t-1}}))\| \notag \\
&+\|\proj_{\W}(v_{t-1}-\gamma_t F_{2,t}(\T{1}{\gamma_t F_{1,t}}{v_{t-1}}))-\proj_{\W}(v_{t-1}-\gamma_t F_{2,t}(\T{1}{\gamma_t F_{1,t}^{\prime}}{v_{t-1}}))\| +\gamma_t \Delta_{2,t}  \notag \\
\mleq{(i)}& \|u_{t-1}-v_{t-1}\|+[\M_{1,t}+\M_{2,t}+2LD]L\gamma_t^2
+\gamma_tL\| \T{1}{\gamma_t F_{1,t}}{v_{t-1}} - \T{1}{\gamma_t F_{1,t}^{\prime}}{v_{t-1}}\|+\gamma_t \Delta_{2,t} %\label{use:lem_exp}
\\
\mleq{(ii)}& \nu_{t-1} + [\M_{1,t}+\M_{2,t}+2LD]L\gamma_t^2+L\Delta_{1,t}\gamma_t^2+\Delta_{2,t}\gamma_t, %\label{use:rec_bd_stab} 
\end{align*}
where in inequality (i), %\eqref{use:lem_exp} 
we used Lemma \ref{lem:expansion_EG} (more precisely, inequality \eqref{eqn:exp_bd_F1neqF2}), 
and in inequality (ii), %\eqref{use:rec_bd_stab} 
we used \eqref{eqn:rec_bd_stab}. Unraveling the above recursion, we get that for all $t\in[T]$,
$$ \nu_t \leq \sum_{s=1}^{t}\Big( [\M_{1,t}+\M_{2,t}+2LD]L\gamma_s^2+L\Delta_{1,s}\gamma_s^2+\Delta_{2,s}\gamma_s\Big).$$
% \Big( 4\gamma_s^2ML+2(\gamma_s L)^kD+2\gamma_s^2L\Delta_{1,s}+\gamma_s\Delta_{2,s} \Big).$$
Finally, we combine the bound above with \eqref{eqn:bd_delta}, to conclude that for all $t\in[T]$:
$$ \delta_t \leq \sum_{s=1}^{t-1}\Big( [\M_{1,t}+\M_{2,t}+2LD]L\gamma_s^2+L\Delta_{1,s}\gamma_s^2+\Delta_{2,s}\gamma_s\Big)
+\Delta_{1,t}\gamma_t+2LD\gamma_t.$$
%4ML\sum_{s=0}^{t-1}\gamma_s^2
%+2D\sum_{s=0}^{t-1} (\gamma_s L)^k+
%2L\sum_{s=0}^{t-1}\gamma_s^2\Delta_{1,s}+ 2 \gamma_t\Delta_{1,t}+ 2L\sum_{s=0}^{t-1}\gamma_s\Delta_{2,s}+2D(\gamma_tL)^{k-1}.$$
%\end{enumerate}
\end{proof}

The next theorem provides in-expectation and high probability upper bounds for the \knseg\, method. Despite the fact that we will not particularly apply the latter bounds, we believe these may be of independent interest.

\begin{theorem} \label{thm:stab_k_EG}
The \knseg\, method (Algorithm \ref{alg:alg1}) for closed and convex domain $\W\subseteq \bE$ 
with diameter $D$, operators in ${\cal M}_{\W}^1(L, M)$ and stepsizes $0<\gamma_t\leq 1/L$, satisfies the 
following uniform argument stability bounds:
\begin{enumerate}
\item {\em Batch method $\AbatchEG$:} \added{Where given dataset $\bS$, $F_{1,t}=F_{\bS}+\bg_1^t$, and $F_{2,t}=F_{\bS}+\bg_2^t$.} In expectation,
$$\sup_{\bS\simeq \bS^{\prime}}\EE_{\AbatchEG}[\delta_{\AbatchEG}(\bS,\bS^{\prime})]\leq 
 \sum_{t=0}^{T-1}\Big( [4M+2LD+4\sqrt{d}\sigma]L\gamma_t^2+\frac{2ML}{n}\gamma_t^2+\frac{2M}{n}\gamma_t\Big)
+ \frac{1}{T}\sum_{t=1}^{T}\big(\frac{2ML}{n}+2LD\big)\gamma_t,$$
%$\sup_{S\simeq S^{\prime}}\delta_{\AbatchEG}(S,S^{\prime}) \leq
%\sum_{s=1}^{t-1}\Big( [4M+2LD]L\gamma_s^2+\frac{2ML}{n}\gamma_s^2+\frac{2M}{n}\gamma_s\Big)
%+\frac{2M}{n}\gamma_t+2LD\gamma_t$.
%\leq 2D\sum_{s=0}^{t-1}(\gamma_s L)^{k}+2D(\gamma_tL)^{k-1}+\frac{4LM}{n}\sum_{s=0}^{t}\gamma_s.$;
And for constant stepsize $\gamma_t\equiv\gamma$, 
there exists a universal constant $K>0$, such that for any $0<\theta<1$, with probability $1-\theta$:
%\begin{eqnarray*}
\begin{equation} \label{eqn:h_p_bd_batch_NSEG}
\sup_{\bS\simeq \bS^{\prime}}\delta_{\AbatchEG}(\bS,\bS^{\prime})
%&\leq& 
\leq 4[T\sqrt{d}\sigma+\sigma\sqrt{Kd\ln(1/\theta)}]L\gamma^2+
 [4M+2LD]LT\gamma^2+\frac{2ML}{n}T\gamma^2+\frac{2M}{n}T\gamma
+ \big(\frac{2ML}{n}+2LD\big)\gamma.
\end{equation}
%\end{eqnarray*}
\item {\em Sampled with replacement $\AreplEG$:} \added{Where given dataset $\bS$, $F_{1,t}=F_{\bbeta_{i(1,t)}}+\bg_1^t$, and $F_{2,t}=F_{\bbeta_{i(2,t)}}+\bg_2^t$, for $i(1,t),i(2,t)\sim\mbox{Unif}([n])$, independently.} In expectation,
\begin{equation} \label{eqn:exp_UAS_stoch_EG}
\sup_{\bS\simeq \bS^{\prime}}\EE[\delta_{\AreplEG}(\bS,\bS^{\prime})] 
\,\,\leq\,\, \sum_{t=0}^{T-1}\Big( [4M+2LD+4\sqrt{d}\sigma]L\gamma_t^2+\frac{2 ML}{n}\gamma_t^2+\frac{2 M}{n}\gamma_t\Big)
+ \frac{1}{T}\sum_{t=1}^{T}\big(\frac{2ML}{n}+2LD\big)\gamma_t.
%[4M+2LD]L\sum_{s=0}^{t-1}\gamma_s^2
%+\frac{4M}{n}\sum_{s=0}^{t}\gamma_s+2LD\gamma_t.
\end{equation}
And for constant stepsize $\gamma_t\equiv\gamma$, there exists a universal constant $K>0$, such that
for any $0<\theta<1/[2n]$, with probability $1-\theta$:
%\begin{equation} \label{eqn:HP_UAS_stoch_EG} 
%\sup_{S\simeq S^{\prime}}\delta_{\cal A}(S,S^{\prime})\leq 
%[4M+2LD]LT\gamma^2++2LD\gamma
%+\big(1+3\log\big(\frac{2n}{\theta}\big)\big)\frac{2\tilde MT}{n}(L\gamma^2+2\gamma).
%\end{equation}
\begin{multline}  \label{eqn:HP_UAS_stoch_EG} 
\sup_{\bS\simeq \bS^{\prime}}\delta_{\AreplEG}(\bS,\bS^{\prime})\leq  4[T\sqrt{d}\sigma+\sigma\sqrt{Kd\ln(2/\theta)}]L\gamma^2+ [4M+2LD]LT\gamma^2 +2LD\gamma \\
+\big(1+3\log\big(\frac{2n}{\theta}\big)\big)\frac{2MT}{n}(L\gamma^2+\gamma/T+\gamma).
\end{multline}
\end{enumerate}
\end{theorem}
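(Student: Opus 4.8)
The plan is to reduce the uniform argument stability of the returned average $\wb{w}^T$ to the per-iterate deviation bounds of Lemma~\ref{lem:stab_EG}, and then to control, for each sampling scheme, the data-sensitivity terms $\Delta_{i,t}$ and the operator-gap terms $\M_{i,t}$ that drive that recursion. First I would run Algorithm~\ref{alg:alg1} on neighbors $\bS\simeq\bS'$ under a shared-randomness coupling: identical Gaussian noises $\bg_1^t,\bg_2^t$ and, in the replacement case, identical sampled indices. Then the two operator sequences differ only through the data, Lemma~\ref{lem:stab_EG} applies with $\{u_t,w_t\}$ and $\{v_t,z_t\}$ the two trajectories, and since $\wb{w}^T=\Gamma_T^{-1}\sum_t\gamma_t w_t$, convexity of the norm gives $\delta_{\A}(\bS,\bS')\le\Gamma_T^{-1}\sum_t\gamma_t\delta_t$. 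Plugging \eqref{eqn:bd_delta_F1neqF2} and swapping the order of summation (using $\sum_{t>s}\gamma_t\le\Gamma_T$) collapses the double sum and leaves me needing bounds on $\M_{1,s}+\M_{2,s}$, $\Delta_{1,s}$, and $\Delta_{2,s}$.

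The two feeding bounds are elementary. Because every fixed point $\T{}{\gamma F_{i,s}}{\cdot}$ lies in $\W$, the data part of each operator is $M$-bounded, and the noise enters additively, the triangle inequality gives $\M_{1,s}+\M_{2,s}\le 4M+2\|\bg_1^s\|+2\|\bg_2^s\|$ in both schemes. For the sensitivity, write the differing index as $i_0$: in the batch case $F_{\bS}-F_{\bS'}=\tfrac1n(F_{\bbeta_{i_0}}-F_{\bbeta_{i_0}'})$, so $\Delta_{1,s},\Delta_{2,s}\le 2M/n$ surely; in the replacement case $\Delta_{i,s}\le 2M\cdot\mathbf{1}\{\text{the index drawn at that evaluation equals }i_0\}$, which is zero unless $i_0$ is sampled. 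Substituting these bounds, using $\EE\|\bg_i^s\|\le\sqrt{d}\sigma$ and $\EE[\Delta_{i,s}]\le 2M/n$, and taking expectations in $\delta_{\A}(\bS,\bS')\le\Gamma_T^{-1}\sum_t\gamma_t\delta_t$ proves both in-expectation statements at once; they coincide precisely because the two schemes have the same expected sensitivity.

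For the high-probability bounds I would isolate the independent sources of randomness. In the batch case only the Gaussian norms are random, so concentration of $\|\bg\|$ for Gaussian vectors bounds $\sum_s(\|\bg_1^s\|+\|\bg_2^s\|)\le 2T\sqrt d\sigma+2\sigma\sqrt{Kd\ln(1/\theta)}$ with probability $1-\theta$; carrying this through the (now deterministic) remaining terms yields \eqref{eqn:h_p_bd_batch_NSEG}. In the replacement case the sensitivity terms are themselves random: $\sum_s L\Delta_{1,s}\gamma^2$, $\sum_s\Delta_{2,s}\gamma$, and the trailing $\Gamma_T^{-1}\sum_t\Delta_{1,t}\gamma^2$ are each at most $2M$ times the number $N$ of evaluations that draw $i_0$, which is Binomial$(2T,1/n)$ with mean $2T/n$. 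A multiplicative Chernoff bound controls $N$, and, crucially, because the stability statement is a supremum over all neighbors, a union bound over the $n$ possible differing indices $i_0$ forces $N\le\tfrac{2T}{n}(1+3\log(2n/\theta))$ to hold simultaneously for the worst case; this is what produces the constraint $\theta<1/[2n]$ and the factor $1+3\log(2n/\theta)$. The three occurrences of $\Delta$ above are exactly the source of the three summands $L\gamma^2+\gamma/T+\gamma$ in \eqref{eqn:HP_UAS_stoch_EG}.

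I expect the replacement high-probability bound to be the main obstacle. The delicate points are: choosing the coupling so that the Gaussian-noise event and the index-hit event are independent and can be intersected at confidence $1-\theta$ (splitting the budget as $\theta/2$ each); handling the supremum over neighbors by union-bounding the resampling count over all $n$ candidate indices rather than a single fixed $i_0$; and reassembling the deterministic contribution $4M+2LD$, the noise contribution, and the hit-count contribution so that they land in precisely the stated form. The in-expectation bounds and the batch high-probability bound are, by comparison, direct substitutions into Lemma~\ref{lem:stab_EG}.
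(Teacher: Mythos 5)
Your proposal is correct and follows essentially the same route as the paper: a shared-randomness coupling of the two trajectories feeding the recursion of Lemma~\ref{lem:stab_EG}, with $\M_{1,t}+\M_{2,t}\leq 4M+2(\|\bg_1^t\|+\|\bg_2^t\|)$ and $\Delta_{i,t}$ bounded by the single-point sensitivity ($2M/n$ surely in the batch case, $2M$ times a Bernoulli$(1/n)$ hit indicator in the replacement case), followed by Gaussian-norm concentration for the batch high-probability bound and a Chernoff bound on the hit count plus a union bound over the $n$ candidate differing indices for the replacement case. The minor presentational differences (collapsing the double sum by swapping summation order, lumping the two index draws into one Binomial$(2T,1/n)$ count rather than two Binomial$(T,1/n)$ counts) do not change the argument.
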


\begin{proof} Let $\bS\simeq \bS^{\prime}$. Then 
\begin{enumerate}
\item {\bf Batch method.} Notice that for the batch case $F_{1,t}=F_{\bS}+\bg_1^t$, and
$F_{1,t}^{\prime}=F_{\bS^{\prime}}+\bg_1^t$; and $F_{2,t}=F_{\bS}+\bg_2^t$, and 
$F_{2,t}^{\prime}=F_{\bS^{\prime}}+\bg_2^t$. 
Then, it is easy to see that $\Delta_{1,t}\leq2M/n$ and $\Delta_{2,t}\leq2M/n$. On the other hand,
since the operators are $M$ bounded and since noise addition is Gaussian
\begin{eqnarray}
\EE[\M_{1,t}] &=& \EE[\|F_{1,t}(\T{}{\gamma F_{1,t}}{u_{t-1}})-F_{2,t}(\T{}{\gamma F_{2,t}}{u_{t-1}})\|] \nonumber \\
&\leq&  \EE[\|F_{\bB_t^1}(\T{}{\gamma F_{1,t}}{u_{t-1}})+\bg_1^t\|]+\EE[\|F_{\bB_t^2}(\T{}{\gamma F_{2,t}}{u_{t-1}})+\bg_2^t\|] \nonumber\\
&\leq& 2M+\EE[\|\bg_1^t\|+\|\bg_2^t\|] \leq 2[M+\sqrt{d}\sigma], \label{eqn:bd_noise_stab}
\end{eqnarray}
and an analog bound holds for $\EE[\M_{2,t}]$. Hence, by Lemma \ref{lem:stab_EG}:
$$\EE_{\AbatchEG}[\delta_{\AbatchEG}(S,S^{\prime})]\leq 
 \sum_{t=0}^{T-1}\Big( [4M+2LD+4\sqrt{d}\sigma]L\gamma_t^2+\frac{2ML}{n}\gamma_t^2+\frac{2M}{n}\gamma_t\Big)
+ \frac{1}{T}\sum_{t=1}^{T}\big(\frac{2ML}{n}+2LD\big)\gamma_t,$$
which proves the claimed bound. 

For the high probability bound, we use that the norm of a Gaussian vector is $Kd\sigma^2$-subgaussian,
for a universal constant $K>0$ (see, e.g.~\cite[Thm.~3.1.1]{Vershynin:2018}), and therefore 
$\EE[\exp\{\lambda(\|\bg_i^t\|-\sigma\sqrt{d})\}]\leq \exp\{Kd\sigma^2\lambda^2\}$; hence by the Chernoff-Cr\'amer bound,
for any $\alpha>0$
\begin{eqnarray*}
\PP\Big[\sum_{t\in[T]} \big(\|\bg_1^t\|+\|\bg_2^t\|\big) >(2+\alpha)T\sqrt{d}\sigma)\Big]
&\leq& \exp\{-\lambda\alpha T\sqrt{d}\sigma\}\Big( \exp\{2Kd\sigma^2\lambda^2\} \Big)^T\\
&=& \exp\{T(2Kd\sigma^2\lambda^2-\alpha\sqrt{d}\sigma\lambda)\}.
\end{eqnarray*}
%The claimed bound \eqref{eqn:exp_UAS_stoch_EG} can be obtained using that $\gamma_t \leq 1/L$.
Choosing $\lambda=\alpha/[4K\sqrt{d}\sigma]$ and $\alpha=\frac{2\sqrt{K}}{T}\sqrt{\ln(1/\theta)}$, we get 
\begin{equation} \label{eqn:concentration_gaussians}
\PP\Big[\sum_{t\in[T]} \big(\|\bg_1^t\|+\|\bg_2^t\|\big) > 2T\sqrt{d}\sigma+2\sigma\sqrt{Kd\ln(1/\theta)}\Big]\leq\theta.
\end{equation}
This guarantee, together with the rest of the terms appearing in our previous stability bound (which hold w.p.~1) 
proves \eqref{eqn:h_p_bd_batch_NSEG}.
%Noticing that in the batch case $F_{1,t}=F_{2,t}=F_{\bS}$, and also that
%$$\Delta_{1,t} = \Delta_{2,t} = \sup_{u\in\W}\|F_{\bS}(u)-F_{\bS^{\prime}}(u)\|=
% \sup_{u\in\W}\|\frac1n[F(u,z_j)-F(u,z_j^{\prime})]\|\leq \frac{2M}{n},$$
%we conclude by Lemma \ref{lem:stab_EG} (i) (eqn.~\eqref{eqn:bd_delta_F1eqF2}), that
%\begin{eqnarray*}
%\delta_{\AbatchEG}(S,S^{\prime}) &\leq& 
%\frac{1}{\sum_t \gamma_t} \sum_t \gamma_t\|w^t-z^t\| \leq \max_{t\in[T]} \|w^t-z^t\|\\
%&\leq&  2D\sum_{s=0}^{t-1}(\gamma_s L)^{k}+2D(\gamma_tL)^{k-1}+\frac{4LM}{n}\sum_{s=0}^{t}\gamma_s.
%\end{eqnarray*}
\item {\bf Sampled with replacement.} 
%\cnote{Correct notation for batches, and decide on using $S_1=S_2$.} 
Let $i\in[n]$ be the coordinate where $\bS$ and $\bS^{\prime}$ may differ.
Let $i(1,t),i(2,t)\sim \mbox{Unif}([n])$ i.i.d., for $t\in [T]$. Now we apply Lemma \ref{lem:stab_EG} with 
$F_{1,t}=F_{\bbeta_{i(1,t)}}+\bg_1^t$, and $F_{1,t}^{\prime}=F_{\bbeta_{i(1,t)}^{\prime}}+\bg_1^t$; 
and $F_{2,t}=F_{\bbeta_{i(2,t)}}+\bg_2^t$, and $F_{2,t}^{\prime}=F_{\bbeta_{i(2,t)}^{\prime}}+\bg_2^t$. 
Hence we have that $(\Delta_{1,t})_{t\in[T]}$ and $(\Delta_{2,t})_{t\in[T]}$ are
sequences of independent r.v.~with expectation bounded by $2M/n$. Therefore,
by Lemma \ref{lem:stab_EG}  (eqn.~\eqref{eqn:bd_delta_F1neqF2}), and following the
steps that lead to inequality \eqref{eqn:bd_noise_stab}, we have:
\begin{eqnarray*}
\EE[\delta_{\cal A}(\bS,\bS^{\prime})] 
&\leq& \sum_{t=0}^{T-1}\Big( [4M+2LD+4\sqrt{d}\sigma]L\gamma_t^2+\frac{2ML}{n}\gamma_t^2+\frac{2M}{n}\gamma_t\Big)
+ \frac{1}{T}\sum_{t=1}^{T}\big(\frac{2ML}{n}+2LD\big)\gamma_t.
%\sum_{t=1}^{T}\Big( [4M+2LD]L\gamma_t^2+\frac{2ML}{n}\gamma_t^2\Big)+\frac{2M}{n}\gamma_t\Big) +\frac{2LD}{T}\sum_{t=1}^T\gamma_t .
\end{eqnarray*}
Finally for the high-probability bound, note that for any realization of the algorithm randomness, we have
$$ \delta_{\cal A}(\bS,\bS^{\prime}) \leq \sum_{t=1}^T[4M+2(\|\bg_1^t\|+\|\bg_2^t\|)+2LD]L\gamma_t^2+\frac{2LD}{T}\sum_{t=1}^T\gamma_t
+L\sum_{t=1}^T\gamma_t^2\Delta_{1,t}+\frac1T\sum_{t=1}^T\Delta_{1,t}\gamma_t+\sum_{t=1}^T\Delta_{2,t}\gamma_t.$$
We additionally assume constant stepsize, $\gamma_t\equiv \gamma>0$.
%For constant stepsize we 
Hence, we can resort on concentration of sums of Bernoulli random variables, which guarantees that
$$ \PP\Big[ \sum_{t=1}^T\Delta_{1,t} > (1+3\log(2/\theta))\frac{2MT}{n} \Big]
\leq \exp\{-\log(2/\theta)\}=\frac{\theta}{2}. $$
An analog bound can be established for $\Delta_{2,t}$, which together with bound \eqref{eqn:concentration_gaussians}
leads to
\begin{multline*} 
\PP_{\AreplEG}\Big[ \delta_{\AreplEG}(\bS,\bS^{\prime})> 4[T\sqrt{d}\sigma+\sigma\sqrt{Kd\ln(1/\theta)}]L\gamma^2+ [4M+2LD]LT\gamma^2 +2LD\gamma \\
+\big(1+3\log\big(\frac{2}{\theta}\big)\big)\frac{2MT}{n}(L\gamma^2+\gamma/T+\gamma) \Big]\leq 2\theta. 
\end{multline*}
Notice this bound only depends on our choice of $i$, and it is otherwise uniform over all $\bS\simeq \bS^{\prime}$. 
Finally, by a union bound on $i\in[n]$ (together with a renormalization of $\theta$), we have that
\begin{multline*} 
\PP_{\AreplEG}\Big[ \sup_{\bS\simeq \bS^{\prime}} \delta_{\AreplEG}(\bS,\bS^{\prime})> 4[T\sqrt{d}\sigma+\sigma\sqrt{Kd\ln(2/\theta)}]L\gamma^2+ [4M+2LD]LT\gamma^2 +2LD\gamma \\
+\big(1+3\log\big(\frac{4n}{\theta}\big)\big)\frac{2MT}{n}(L\gamma^2+\gamma/T+\gamma) \Big]\leq \theta. 
\end{multline*}
\end{enumerate}
\end{proof}

\subsection{Optimal risk for DP-SVI and DP-SSP by \knseg~method}

Now we use our stability and risk bounds for NSEG to derive optimal risk bounds for DP-SSP. For this, we use
the sampled with replacement variant, $\AreplEG$. 
%Our version of 
%NSEG here considers size one batches chosen uniformly at random from the dataset, and include Gaussian 
%noise addition to guarantee differential privacy. %More specifically
\begin{equation} \label{eqn:NSEG_setting} 
F_{1,t}(\cdot)=F_{\boldsymbol{\beta}_{i(1,t)}}(\cdot)+\bg_1^t; \qquad
F_{2,t}(\cdot)=F_{\boldsymbol{\beta}_{i(2,t)}}(\cdot)+\bg_2^t.
%F_{1,t}(\cdot)= F(\cdot\,;S_{i(t)})+M\boldsymbol{\xi}_1^t; \qquad
%F_{2,t}(\cdot)= F(\cdot\,;S_{i(t)})+M\boldsymbol{\xi}_2^t 
%\quad \Big(\,i(t)\sim\mbox{Unif}([n]),\,\,\boldsymbol{\xi}_1^t,\boldsymbol{\xi}_2^t\sim {\cal N}(0,\sigma_t^2 )\,\Big).
\end{equation}
Using the moments accountant method (Theorem \ref{thm:dp_prox_multipass}) one can show the following.
\begin{proposition}[Privacy of sampled with replacement \knseg]
Algorithm \ref{alg:alg1} with operators given by eqn.~\eqref{eqn:NSEG_setting} and
$\sigma_t^2=8M^2\log(1/\privb)/\priva^2$, is $(\priva,\privb)$-differentially private.
\end{proposition}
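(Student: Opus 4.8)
The plan is to view the sampled-with-replacement NSEG run as an adaptive composition of subsampled Gaussian mechanisms and then invoke the moments accountant bound of Theorem~\ref{thm:dp_prox_multipass}. Concretely, each iteration $t$ of Algorithm~\ref{alg:alg1} with the operators of eqn.~\eqref{eqn:NSEG_setting} performs \emph{two} noisy operator evaluations: the extrapolation evaluation $F_{1,t}(u_{t-1})=F_{\bbeta_{i(1,t)}}(u_{t-1})+\bg_1^t$ and the update evaluation $F_{2,t}(w_t)=F_{\bbeta_{i(2,t)}}(w_t)+\bg_2^t$, where $i(1,t),i(2,t)\sim\mbox{Unif}([n])$ are drawn independently. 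I would therefore treat the whole run as $K=2T$ consecutive mechanisms $\A_1,\ldots,\A_K$, each of which subsamples a single index ($m=1$) and releases one noisy operator value; the evaluation point ($u_{t-1}$ or $w_t$) is a deterministic post-processing of the previously released noisy values, and hence enters only through the past-transcript argument $L$ in the notation of Theorem~\ref{thm:dp_prox_multipass}.

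First I would bound the per-step sensitivity. For a fixed past transcript $L$ (equivalently, a fixed evaluation point $w\in\W$), changing the single sampled datapoint from $\bbeta$ to $\bbeta'$ changes the released value by $\|F_{\bbeta}(w)-F_{\bbeta'}(w)\|\le\|F_{\bbeta}(w)\|+\|F_{\bbeta'}(w)\|\le 2M$, using that every $F_{\bbeta}$ is $M$-bounded (i.e.\ lies in ${\cal M}_{\cal W}^1(M,L)$). Crucially this bound is uniform over $w$ and over the choice of the differing coordinate, so the hypothesis of Theorem~\ref{thm:dp_prox_multipass} holds with sensitivity $s=2M$, independent of $L$.

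Then I would match the noise level. Applying Theorem~\ref{thm:dp_prox_multipass} with $K=2T$, $m=1$, and $s=2M$, the composed transcript is $(\priva,\privb)$-DP provided $\priva<c_1K(m/n)^2$ and $\sigma\ge\sqrt{2K\ln(1/\privb)}\,sm/[n\priva]$. Substituting the stated $\sigma_t^2\equiv\sigma^2=8M^2\ln(1/\privb)/\priva^2$, the noise condition reads $8M^2\ln(1/\privb)/\priva^2\ge 2(2T)\ln(1/\privb)(2M)^2/[n^2\priva^2]=16TM^2\ln(1/\privb)/[n^2\priva^2]$, which is equivalent to $2T\le n^2$; that is, the bound holds exactly as long as the total number of operator evaluations does not exceed $n^2$, the budget recorded for the multipass method in the table. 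In the same regime the side condition $\priva<c_1K(m/n)^2=2c_1T/n^2$ is satisfied for $\priva$ below an absolute constant. Finally, since DP is closed under post-processing and the returned $\wb{w}^T$ is a function of the released transcript, privacy of the transcript yields privacy of the algorithm's output.

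The main obstacle is the bookkeeping of the double evaluation: one must recognize that the extragradient structure costs two subsampled-Gaussian releases per iteration (rather than one), so the effective number of composition steps is $2T$, and that the two evaluations draw independent indices $i(1,t),i(2,t)$ so they compose as genuinely separate subsampled mechanisms. A secondary point requiring care is confirming that the sensitivity of each release is datapoint-wise $2M$ and, crucially, \emph{independent of the adaptively chosen evaluation point}; this is exactly what lets us absorb the evaluation point into the transcript argument $L$ and keep $s$ uniform as demanded by Theorem~\ref{thm:dp_prox_multipass}.
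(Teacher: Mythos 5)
Your route is exactly the one the paper intends: the paper gives no written proof of this proposition and simply invokes the moments accountant bound of Theorem~\ref{thm:dp_prox_multipass}, and your reduction --- per-release sensitivity $s=2M$ uniform over the adaptively chosen evaluation point (absorbed into the transcript argument $L$), subsample size $m=1$, and post-processing for the averaged output --- is the correct way to instantiate it. Your observation that the extragradient step costs \emph{two} independent subsampled Gaussian releases per iteration, so that the composition length is $K=2T$ rather than $T$, is a genuine point of care that the paper glosses over.

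However, your final numerical check does not actually close, and you assert that it does. With $K=2T$, $m=1$, $s=2M$, Theorem~\ref{thm:dp_prox_multipass} demands $\sigma^2\ge 2(2T)\ln(1/\privb)(2M)^2/[n\priva]^2=16TM^2\ln(1/\privb)/[n^2\priva^2]$, and you correctly reduce the stated $\sigma^2=8M^2\ln(1/\privb)/\priva^2$ to the condition $2T\le n^2$. But the multipass excess-risk theorem that this proposition feeds into explicitly sets $T=n^2$, so $2T=2n^2>n^2$ and the condition fails by a factor of $2$; the table's ``$n^2$ operator evaluations'' cannot be used to paper over this, since $T=n^2$ iterations means $2n^2$ operator evaluations. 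The stated noise level is calibrated to $K=T$ compositions (plugging $K=n^2$ into the theorem gives exactly $\sigma^2=8M^2\ln(1/\privb)/\priva^2$), i.e., the paper is implicitly counting one composition step per iteration. This is only a constant-factor discrepancy --- it is repaired by taking $\sigma^2=16M^2\ln(1/\privb)/\priva^2$ or $T=n^2/2$, with no effect on any asymptotic rate --- but as written your proof establishes the claim only under $T\le n^2/2$, not for the parameter choice actually used downstream, and you should state that restriction (or the doubled variance) explicitly rather than claim the check succeeds.
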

%\dnote{Here, is it $L^2$ or $M^2$ in the definition of $\sigma^2$? Does $\gamma$ not play any role given that it is part of $\ell_2$-sensitivity?}
%\cnote{My bad. I corrected it now}
\begin{theorem}[Excess risk of sampled with replacement NSEG]
Consider an instance of the  \eqref{main_problem} or \eqref{spp_problem} problem. 
%differentially private stochastic convex-concave saddle point problem \eqref{spp_problem}.
Let $\A$ be the sampled with replacement variant \eqref{eqn:NSEG_setting} of \knseg~method (Algorithm \ref{alg:alg1}), with
$\gamma_t=\gamma=\min\{D/M,1/L\}/[n\max\{\sqrt n, \sqrt{d \ln(1/\privb)}/\priva\}]$, $\sigma_t^2=8M^2\log(1/\privb)/\priva^2$, $T=n^2$. %, and ${\cal A}(\bS)=(x(\bS),y(\bS))$. 
Then, $\wVIgap{\A}{F}$ (for SVI) or $\wSPgap{\A}{f}$ (for SSP) are bounded by
%for any $\theta\in (6n\exp(-n/2),1)$, with 
%probability at least $1-\theta$ over the randomness of the sample and the algorithm, we have
$$
 O\Big( (MD+LD^2)\max\Big\{\frac{1}{\sqrt n}, \frac{\sqrt{d\ln(1/\privb)}}{n\priva}\Big\} +\frac{MLD}{n^{5/2}}\Big).
$$
\end{theorem}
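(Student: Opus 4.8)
The plan is to combine the stability-implies-generalization bound with the empirical convergence guarantee for \knseg, exploiting that the sampled-with-replacement variant \eqref{eqn:NSEG_setting}, conditioned on the dataset $\bS$, is exactly the stochastic extragradient method run on the empirical distribution $\wh{\Pcal}_{\bS}$. Concretely, I would first invoke Proposition \ref{prop:weak_generalization_SVI} (resp.\ Proposition \ref{prop:weak_generalization_SSP} for SSP) to split
$$ \wVIgap{{\cal A}}{F} \le \EE_{\bS}[\EmpVIgap{{\cal A}}{F_{\bS}}] + M\delta, $$
where $\delta$ is the uniform argument stability of $\AreplEG$. In the SSP case, each coordinate stability $\delta_x,\delta_y$ is bounded by the full iterate stability $\delta$ (since $\|x(\bS)-x(\bS')\|\le\|{\cal A}(\bS)-{\cal A}(\bS')\|$, and likewise for $y$), so the same $M\delta$ term applies up to a factor $2$.

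Next I would bound the empirical gap. Fixing $\bS$, the draws $i(1,t),i(2,t)\sim\mathrm{Unif}([n])$ make the operators in \eqref{eqn:NSEG_setting} i.i.d.\ samples of $F_{\bbeta}$ with $\bbeta\sim\wh{\Pcal}_{\bS}$, whose mean is $F_{\bS}$. Hence Theorem \ref{thm:dp_generalization_conv}, applied with target operator $F_{\bS}$, distribution $\wh{\Pcal}_{\bS}$ and batch size $B_t=1$, gives $\EE_{{\cal A}}[\EmpVIgap{{\cal A}}{F_{\bS}}\mid\bS]\le K_0(T)/\Gamma_T$, and averaging over $\bS$ preserves this since the right-hand side is deterministic. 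With constant $\gamma_t\equiv\gamma$, $\sigma_t\equiv\sigma$ and $T=n^2$ this equals $\frac{D^2}{T\gamma}+7\gamma[\tfrac{M^2}{2}+d\sigma^2]$. One checks $\gamma\le 1/(\sqrt3 L)$ (so the convergence theorem applies) and $\gamma<1/L$ (so the stability bounds apply), both immediate from $\gamma\le 1/(Ln\sqrt n)$ for $n\ge 2$.

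It then remains to substitute the stated parameters and collect terms. Writing $c\triangleq\min\{D/M,1/L\}$ and $R\triangleq\max\{\sqrt n,\sqrt{d\ln(1/\privb)}/\priva\}$ so that $\gamma=c/(nR)$, the elementary facts are $Mc\le D$, $Lc\le1$, $R\ge\sqrt n$, $R\ge\sqrt{d\ln(1/\privb)}/\priva$, and $\sqrt d\,\sigma\le\sqrt8\,MR$. The dominant empirical contribution $\frac{D^2}{T\gamma}=\frac{D^2R}{nc}\le(MD+LD^2)\tfrac Rn=(MD+LD^2)\max\{\tfrac1{\sqrt n},\tfrac{\sqrt{d\ln(1/\privb)}}{n\priva}\}$ gives the leading term, and the noise term $7\gamma d\sigma^2$, using $1/R\le\priva/\sqrt{d\ln(1/\privb)}$ and $M^2c\le MD$, reduces to $O(MD\,\sqrt{d\ln(1/\privb)}/(n\priva))$, again within the leading order. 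For the stability term I would insert the constant-stepsize form of \eqref{eqn:exp_UAS_stoch_EG} into $M\delta$ and bound each summand: the crucial one is $4MLT\gamma^2\sqrt d\,\sigma=4MLc^2\sqrt d\,\sigma/R^2\le4\sqrt8\,LD^2/\sqrt n$ (using $\sqrt d\,\sigma\le\sqrt8 MR$ and $M^2c^2L\le D^2L$), together with $2M^2T\gamma/n=2M^2c/R\le2MD/\sqrt n$, both of leading order; the surviving genuinely $L$-dependent leftover $2M^2L\gamma/n=2M^2Lc/(n^2R)\le2MLD/n^{5/2}$ yields the additive $MLD/n^{5/2}$, while all remaining summands are $O((MD+LD^2)/n)$ or smaller and are absorbed.

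The main obstacle I anticipate is precisely the bookkeeping in this final step: several summands involving $L$, $\sqrt d\,\sigma$ and $1/R^2$ look potentially large and must be tamed by the interplay $\gamma=c/(nR)$ with $\sqrt d\,\sigma\le\sqrt8 MR$ and $c\le\min\{D/M,1/L\}$, so that the two noise-driven contributions collapse exactly into the $\max$-term and the only $L$-dependent residue is the small $MLD/n^{5/2}$. The SSP case is handled identically, replacing $\EmpVIgap{\cdot}{\cdot}$ by $\EmpSPgap{\cdot}{\cdot}$ and the VI-gap bound by its saddle-point counterpart in Theorem \ref{thm:dp_generalization_conv}, and using $\delta_x,\delta_y\le\delta$, which yields the same final estimate.
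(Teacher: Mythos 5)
Your proposal is correct and follows essentially the same route as the paper's proof: decompose the weak gap via Proposition \ref{prop:weak_generalization_SVI} (resp.\ \ref{prop:weak_generalization_SSP}) into the expected empirical gap plus $M\delta$, bound the empirical gap by applying Theorem \ref{thm:dp_generalization_conv} to the empirical distribution (sampling with replacement being an unbiased oracle for $F_{\bS}$), and bound $\delta$ by the sampled-with-replacement UAS estimate \eqref{eqn:exp_UAS_stoch_EG} of Theorem \ref{thm:stab_k_EG}, after which the parameter substitution yields exactly the stated leading term and the residual $MLD/n^{5/2}$. Your bookkeeping matches the paper's, and you additionally make explicit the stepsize-condition check and the coordinatewise stability for the SSP case, which the paper leaves implicit.
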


\begin{remark}
Notice that assuming $n=\Omega(\min\{\sqrt{L},\sqrt{M/D}\})$ the bound of the Theorem simplifies to
$$
O\Big(  (MD+LD^2)\max\Big\{\frac{1}{\sqrt n}, \frac{\sqrt{d\ln(1/\privb)}}{n\priva}\Big\} \Big).
$$
This is quite a mild sample size requirement. In this range, when $M\geq LD$, our upper bound matches the excess risk bounds for DP-SCO \cite{Bassily:2019},
and \added{we will show these rates are indeed optimal for DP-SVI and DP-SSP as well}%\comment{Check!}.\cnote{Done!}
\end{remark}

\begin{proof}
Given that our bounds for SVI and SSP are analogous, we proceed indistinctively for both problems.

First, let us bound the empirical accuracy of the method. By Theorem \ref{thm:dp_generalization_conv}, together with the fact
that sampling with replacement is an unbiased stochastic oracle for the empirical operator:
\begin{eqnarray*}
\EE_{\bS}\Big[\mbox{EmpGap}(\A,\bS)\Big] 
&\leq& \frac{1}{\gamma T}\Big( \frac{D^2}{2}+7M^2T\gamma^2(1+\frac{8d \log(1/\privb)}{\priva^2}) \Big)\\
&\leq& \frac{D^2}{2n}\max\{M/D,L\} \max\{\sqrt{n},\sqrt{d\ln(1/\privb)}/\varepsilon\}
+\frac{7M^2\min\{D/M,1/L\}}{n\max\{\sqrt{n},\sqrt{d\ln(1/\privb)}/\varepsilon\}} \frac{9d\ln(1/\privb)}{\varepsilon^2} \\
%&\leq& \frac{MD}{2}\max\Big\{\frac{1}{\sqrt n}, \frac{\sqrt{d\ln(1/\privb)}}{n\priva}\Big\}+7MD\frac{9\sqrt{d\ln(1/\privb)}}{n\priva}\\
&=& O\Big( (MD+LD^2)\max\Big\{\frac{1}{\sqrt n}, \frac{\sqrt{d\ln(1/\privb)}}{n\priva}\Big\} \Big),
\end{eqnarray*}
where $\mbox{EmpGap}(\A,\bS)$ is $\EmpVIgap{\A}{F_{\bS}}$ or
$\EmpSPgap{\A}{f_{\bS}}$ for an SVI or SSP problem, respectively.

Next, by Theorem \ref{thm:stab_k_EG}, we have that ${\cal A}$ (or $x(\bS)$ and $y(\bS)$, for the SSP case)
are UAS with parameter
%Now we use the uniform stability of the algorithm to conclude its population risk accuracy. 
%By Lemma \ref{lem:stab_implies_gen_spp}
%$$ \EE_{\AreplEG}[\GenP{x(\bS)}{\bS}+\GenD{y(\bS)}{\bS}] \leq M[\Delta_x+\Delta_y]. $$
%Theorem \ref{thm:stab_k_EG} provides an upper bound on the uniform stability of the algorithm, which in expectation is
\begin{eqnarray*}
%\Delta_x+\Delta_y &\leq& 
\delta&=& [4M+2LD+4\sqrt{d}\sigma]LT\gamma^2+\frac{2 ML}{n}T\gamma^2+\frac{2 M}{n}T\gamma
+ \big(\frac{2ML}{n}+2LD\big)\gamma\\
&\leq& \Big(\frac{4LD^2}{M}+2D\Big)\frac1n+\frac{8LD^2\sqrt{2d\ln(1/\eta)}}{M\varepsilon n}+\frac{2LD^2}{M}\frac{1}{n^{3/2}}+\frac{2D}{\sqrt n}+\frac{2LD}{n^{5/2}}+\frac{2D}{n^{3/2}}\\
&=& O\Big(\frac{1}{M}\cdot\Big(\frac{MD+LD^2}{n}+\frac{LD^2\sqrt{d\ln(1/\eta)}}{\varepsilon n} +\frac{MLD}{n^{5/2}}\Big) \Big).
%\Big( \frac{4LD^2/M+2L^2D^3/M^2}{n}+\frac{8\sqrt2LD^2}{M}\frac{\sqrt{d\ln(1/\privb)}}{\priva n}+\frac{2LD^2}{Mn^2}+\frac{2D}{\sqrt{n}}
%+2\big(\frac{LD}{n}+\frac{LD^2}{M}\big)\frac{1}{n^{3/2}} \Big).
\end{eqnarray*}
Hence, noting that empirical risk upper bounds weak empirical gap and using Proposition \ref{prop:weak_generalization_SSP} or Proposition \ref{prop:weak_generalization_SVI} (depending on whether
the problem is an SSP or SVI, respectively), we have that the risk is upper bounded by its empirical risk plus $M\delta$, where
$\delta$ is the UAS parameter of the algorithm; in particular,   is bounded by
\begin{eqnarray*}
\wVIgap{\A}{F} &\le& \EE_{\bS}[\EmpVIgap{\A}{F_{\bS}}]+M\delta\\
&=& O\Big( (MD+LD^2)\max\Big\{\frac{1}{\sqrt n}, \frac{\sqrt{d\ln(1/\privb)}}{n\priva}\Big\} +\frac{MLD}{n^{5/2}}\Big),
%O\Big(  (1+LD/M)\frac{LD^2}{n}+(1+L/n^2)\frac{MD}{\sqrt{n}}+(MD+LD^2)\frac{\sqrt{d\ln(1/\privb)}}{\priva n} \Big),
\end{eqnarray*}
Similar claims can be made $\wSPgap{\A}{f_\bS}$. Hence, we conclude the proof.
%which leads to
%$$ \EE_{\AreplEG,\bS}[\GenP{x(\bS)}{\bS}+\GenD{y(\bS)}{\bS}] = O\Big( \frac{MLD+(LD)^2}{n}+\frac{MD}{\sqrt{n}}+\frac{L^2D\sqrt{d\ln(1/\privb)}}{\priva n} \Big). $$
%We conclude that
%\begin{eqnarray*}
%\EE_{\bS,{\cal A}}\Big[\SPgap{x(\bS)}{y(\bS)}{f}\Big] 
%&\leq& \EE_{\bS,{\cal A}}\Big[\SPgap{x(\bS)}{y(\bS)}{f_S}\Big] +
%\EE_{\bS,\AreplEG}[\GenP{x(\bS)}{\bS}+\GenD{y(\bS)}{\bS}] \\
%&=&O\Big( \max\{MD,L^2D\}\cdot\max\Big\{\frac{1}{\sqrt n}, \frac{\sqrt{d\ln(1/\privb)}}{n\priva}\Big\}+ \frac{MLD+(LD)^2}{n} \Big).
%\end{eqnarray*}
\end{proof}

\section{The Noisy Inexact Stochastic Proximal Point Method} \label{sec:prox_DPSVI}

%\cnote{This section uses implicitly the concept of an (approximately) strong solution. I think it would be nice to spend a couple of lines on pointing this out, and linking it to \eqref{main_problem}.}
%We are interested in solving monotone variational inequality problem VI($\W, F$) where $F(w) = \EE_{\boldsymbol{\beta}\sim \Pcal} [F(w; \boldsymbol{\beta})]$ is a stochastic monotone operator which is $L$-Lipschitz continuous a.e. and $\boldsymbol{\beta} \sim \Pcal$ is a random input from unknown distribution $\Pcal$. We have access to distribution $\Pcal$ through the fixed iid sample $\bS$ of size $n$.\\
%For a sampled batch $\bB$, we denote
%\[F(w; \bB) := \tfrac{1}{\abs{\bB}} \tsum_{\boldsymbol{\beta} \in \bB} F(w; \boldsymbol{\beta}).\]
In the previous sections, we presented \knseg\, method with its single-pass and multipass variants and provided optimal risk guarantees for DP-SVI and DP-SSP problems in $O(n^2)$ stochastic operator evaluations. In the rest of the paper, our aim is to provide another algorithm that can achieve the optimal risk for both of these problems with much less computational effort. Towards that end, consider the following algorithm: %\cnote{The ``//No projection operator here'', and ``//Final projection in the output'', does not belong in pseudocode. Better to just keep those as remarks in the text following.}
\begin{algorithm}[H]
	\caption{Noisy Inexact Stochastic Proximal Point (NISPP) Method}
	\label{alg:alg0}
	\begin{algorithmic}[1]
		\STATE {\bf Input:} $w_0 \in \W$
		\FOR {$k = 0, 1, \dots, K$}		
		\STATE Sample a batch $\bB_{k+1} \subseteq \bS$.
		\STATE $u_{k+1} \gets \text{VI}_\nu(\W, F_{\bB_{k+1}}(\cdot)+ \lambda_k (\cdot-w_k))$.
		%\STATE Sample $\boldsymbol{\xi}_{k+1} \sim \mathcal{N}(0, \sigma_{k+1}^2 \mathbb{I}_d)$
		\STATE $w_{k+1} \gets u_{k+1} + \boldsymbol{\xi}_{k+1}$, where $\boldsymbol{\xi}_{k+1} \sim \mathcal{N}(0, \sigma_{k+1}^2 \mathbb{I}_d)$ %\COMMENT{No projection operator here}
		%\STATE \textcolor{red}{$w_{k+1} \gets \proj_{\W}[u_{k+1} + \boldsymbol{\xi}_{k+1}].$} \COMMENT{This line is ignored for now}
		\ENDFOR
		\STATE $\wb{w}_K :=  (\tsum_{k =0}^K \gamma_kw_{k+1})/(\tsum_{k = 0}^K\gamma_k)$
		\STATE {\bf Output:} $\proj_{\W}(\wb{w}_K)$ %\COMMENT{Final projection in the output}
	\end{algorithmic}
\end{algorithm}
In the above algorithm, we leave few things unspecified which will be stated later during convergence and privacy analysis. Here, we detail some key features of the above algorithm. In line 3, we sample a batch $\bB_{k+1}$ of size $B_{k+1}=|\bB_{k+1}|$. 
%number of samples in the batch $\bB_{k+1}$. 
Similar to the \knseg, we will look at two different variants of NISPP method: single-pass and multi-pass. Depending on the type of the method, we will specify the sampling mechanism. %First, we will have a random batch without replacement for SVI problem. Naturally, this will lead to a single pass algorithm over the dataset $\bS$. Second, we will look at sampling random batches with replacement for stochastic saddle point problem. Here, we will run Algorithm \ref{alg:alg0} in a multipass fashion over dataset $\bS$.
In line 4 of Algorithm \ref{alg:alg0}, we have $u_{k+1}$ is a $\nu$-approximate strong VI solution of the mentioned VI problem for some $\nu \ge 0$, i.e., %\cnote{We never really discuss the notion of strong solution. Perhaps make the link directly with and approximate solution to \ref{main_problem}?}
\begin{equation}
	\label{eq:int_rel_prox_1}
	\inprod{F_{\bB_{k+1}}(u_{k+1}) + \lambda_k (u_{k+1}-w_k)}{w-u_{k+1}} \ge -\nu \qquad (\forall w \in \W).
\end{equation}
Note that if $\nu = 0$ then this is an exact solution satisfying \eqref{main_problem} with operator $F$ replaced by $F(\cdot) + \lambda_k(\cdot- w_k)$. For $\nu>0$, we obtain that $u_{k+1}$ is an inexact solution satisfying solution criterion up to $\nu$ additive error. In line 5, we add a Gaussian noise to $u_{k+1}$ in order to preserve privacy. The resulting iterate $w_{k+1}$ can be potentially outside the set $\W$. Hence, in line 7, the ergodic average $\wb{w}_K$ can be outside $\W$. In order to preserve feasibility of the solution, we project $\wb{w}_K$ onto set $\W$ and output it as a solution in line 8. Projection of the average in line 8, as opposed to projection individual $w_{k+1}$ in line 5 is crucial for convergence guarantee of Algorithm \ref{alg:alg0}.% \cnote{Perhaps say this projection for the average, as opposed to each iterate, is crucial to get the desired rates?}

In the rest of this section, we exclusively deal with the single-pass version of NISPP method, %Our privacy and convergence analysis will be for the output $\wb{w}_K$.
%In the next section, we specify the choice of $\sigma_{k+1}$ in order to ensure differential privacy for Algorithm \ref{alg:alg0}. In the rest of this section, 
i.e., we assume that batches $\{\bB_{k+1}\}_{k=0,\ldots,K}$ are disjoint subsets of the dataset $\bS$.  
% $\bB_{k+1}$ is sampled without replacement for all $k = 0, \dots, K$. %This can also be called as a single-pass algorithm since we a perform a single-pass over the data and each sample datapoint is used exactly once. 
We start with the convergence guarantees of single-pass NISPP method.
%\subsection{Convergence analysis}\label{sec:con_singpass_prox}
In order to prove convergence, we show a useful bound on $\dist_\W(\wb{w}_K):= \min_{w\in \W}\gnorm{w-\wb{w}_K}{}{}$.
\begin{proposition}\label{prop:bound_for_projected_iterate}
	Let $\wb{u}_K := \frac{1}{\Gamma_K} \tsum_{k = 0}^K\gamma_ku_{k+1}$. Then,
	\begin{equation}\label{eq:int_rel_prox_23}
		\dist_\W(\wb{w}_K)^2\le \gnorm{\wb{u}_K - \wb{w}_K}{}{2} = \frac{1}{\Gamma_K^2}\gnorm{\tsum_{k = 0}^K\gamma_k\boldsymbol{\xi}_{k+1}}{}{2}.
	\end{equation}
	Moreover, we have 
	\begin{equation}\label{eq:int_rel_prox_24}
		\EE [\dist_\W(\wb{w}_K)^2] \le \frac{1}{\Gamma_K^2}\tsum_{k = 0}^K\gamma_k^2\EE\gnorm{\boldsymbol{\xi}_{k+1}}{}{2}
	\end{equation}
\end{proposition}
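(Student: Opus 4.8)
The plan is to exploit the fact that the only obstruction to feasibility of the ergodic average $\bar{w}_K$ is the injected Gaussian noise, so that the distance to $\W$ is controlled directly by the accumulated noise. First I would observe that each inexact iterate $u_{k+1}$ is feasible: by construction it solves the regularized variational inequality \eqref{eq:int_rel_prox_1} over $\W$, hence $u_{k+1}\in\W$. Since the coefficients $\gamma_k/\Gamma_K$ are nonnegative and sum to one, and $\W$ is convex, the auxiliary average $\bar{u}_K=\frac{1}{\Gamma_K}\sum_{k=0}^K\gamma_k u_{k+1}$ lies in $\W$. Because $\dist_\W(\bar{w}_K)$ is the distance from $\bar{w}_K$ to the \emph{nearest} point of $\W$, it is no larger than the distance to the particular feasible point $\bar{u}_K$, which yields the first inequality in \eqref{eq:int_rel_prox_23}.

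Next I would compute the displacement $\bar{u}_K-\bar{w}_K$ explicitly. Substituting $w_{k+1}=u_{k+1}+\boldsymbol{\xi}_{k+1}$ into the two ergodic averages, the $u_{k+1}$ terms cancel and one is left with $\bar{w}_K-\bar{u}_K=\frac{1}{\Gamma_K}\sum_{k=0}^K\gamma_k\boldsymbol{\xi}_{k+1}$. Taking norms then gives the equality in \eqref{eq:int_rel_prox_23}.

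For the in-expectation bound \eqref{eq:int_rel_prox_24}, I would take expectations in \eqref{eq:int_rel_prox_23} and expand the squared norm of the sum as $\sum_{k,l}\gamma_k\gamma_l\inprod{\boldsymbol{\xi}_{k+1}}{\boldsymbol{\xi}_{l+1}}$. The diagonal terms contribute $\sum_k\gamma_k^2\EE\gnorm{\boldsymbol{\xi}_{k+1}}{}{2}$, which is the desired right-hand side, so the crux is to show the off-diagonal terms vanish. This follows from the martingale structure of the noise: for $l>k$, the perturbation $\boldsymbol{\xi}_{l+1}$ is drawn independently of the history $\mathcal{F}_l$ up to step $l$ (in particular of $\boldsymbol{\xi}_{k+1}$) and is zero-mean, so conditioning gives $\EE\inprod{\boldsymbol{\xi}_{k+1}}{\boldsymbol{\xi}_{l+1}}=\EE\inprod{\boldsymbol{\xi}_{k+1}}{\EE[\boldsymbol{\xi}_{l+1}\mid\mathcal{F}_l]}=0$.

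The argument is essentially bookkeeping, and I expect no genuine obstacle; the two points needing care are the feasibility of $\bar{u}_K$ (which relies on $u_{k+1}\in\W$, i.e.\ that the VI subproblem is solved over $\W$ itself) and the vanishing of the cross terms, for which one must invoke independence and zero-mean of the freshly sampled noise rather than treat the $\boldsymbol{\xi}_{k+1}$ as arbitrary vectors. The real content of the lemma is that projecting only the final average (line 8 of Algorithm \ref{alg:alg0}), rather than each iterate, costs no more than the averaged injected noise.
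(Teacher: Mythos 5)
Your proposal is correct and follows essentially the same route as the paper's proof: feasibility of $\wb{u}_K$ by convexity gives the first inequality, the telescoping of $w_{k+1}=u_{k+1}+\boldsymbol{\xi}_{k+1}$ gives the equality, and the cross terms in the expanded square vanish because the noises are independent and zero-mean. Your added care about the martingale/filtration structure of the noise is a slight refinement of the paper's appeal to i.i.d.-ness but does not change the argument.
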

\begin{proof}
%	First, note that $\dist_\W: \RR^d \to \RR_{\ge 0}$ is convex function. Indeed, for any given $x, y$ let $x_1, y_1 \in \W$ be such that $\dist_\W(x) = \gnorm{x-x_1}{}{}$, $\dist_\W(y) = \gnorm{y-y_1}{}{}$. Then, due to convexity of $\W$, we have $\frac{x_1+y_1}{2} \in \W$. Moreover, $\dist_\W(\frac{x+y}{2}) \le \gnorm{\frac{x+y}{2}-\frac{x_1+y_1}{2}}{}{} \le \gnorm{\frac{x-x_1}{2}}{}{} +\gnorm{\frac{y-y_1}{2}}{}{} = \frac{1}{2}\dist_\W(x) + \frac{1}{2}\dist_\W(y).$ Now note that $f: t \to t^2$ is a nondecreasing convex function on $\RR_{\ge 0}.$ Hence, we have $f(\dist_\W(x)) = \dist_\W(x)^2$ is a convex function (actually this argument can be applied to any $f: t \to t^p,  p \ge 1$). Then, we have,
%	\begin{align*}
%		\dist_\W(\wb{w}_K)^2 &\le \frac{1}{\Gamma_K}\tsum_{k = 0}^K \gamma_k\dist_\W(w_{k+1})^2\\
%		&\le \frac{1}{\Gamma_K}\tsum_{k = 0}^K\gamma_k \gnorm{\boldsymbol{\xi}_{k+1}}{}{2} 
%	\end{align*}
Note that $\wb{u}_K \in \W$. Hence, first relation in \eqref{eq:int_rel_prox_23} follows by definition of $\dist_\W(\cdot)$ function. Equality follows from definition of $\wb{u}_K$ and $\wb{w}_K$. 
To obtain \eqref{eq:int_rel_prox_24}, note that $\{\boldsymbol{\xi}_k\}_{k = 1}^{K+1}$ are i.i.d.~random variable with mean $0$. Expanding $\gnorm{\tsum_{k = 0}^K \gamma_k\boldsymbol{\xi}_{k+1}}{}{2}$, using linearity of expectation and noting $\EE[\boldsymbol{\xi}_i^T\boldsymbol{\xi}_j] = 0$ for all $i \ne j$, we conclude \eqref{eq:int_rel_prox_24}. Hence, we conclude the proof.
\end{proof}
We prove the following convergence rate result for Algorithm \ref{alg:alg0} for the risk of SVI/SSP problem. In particular, we assume that the algorithm performs a single-pass over the dataset $\bS \sim \Pcal^n$ containing $n$ i.i.d. datapoints. %For this theorem, we assume sampling without replacement for the batch $\bB_{k+1}$ for all $k = 0, \dots, K$. This can also be called a single-pass algorithm since we perform a single pass over the data and each sample datapoint is used exactly once.

\begin{theorem}\label{thm:conv_prox_vi}
	Consider the stochastic \eqref{main_problem} problem with operators $F_{\bbeta} \in \M^1(L, M)$. Let $\A$ be the single-pass NISPP method (Algorithm \ref{alg:alg0}) where sequence $\{\gamma_k\}_{k \ge 0}, \{\lambda_k\}_{k \ge0}$ satisfy 
	\begin{equation}\label{eq:step_size_condition}
		\gamma_k \lambda_k = \gamma_0\lambda_0
	\end{equation} for all $k \ge 0$. Moreover, $\bB_{k+1}$ are independent samples from a product distribution $\bB_{k+1} \sim \Pcal^{B_{k+1}}$ and $\bB_{k+1} \subset \bS$. Then, we have
\begin{align}
	\VIgap{\A}{F} \le \nu + \frac{Z_0(K)}{\Gamma_K}  + M \sqrt{\frac{1}{\Gamma_K^2}\tsum_{k = 0}^K\gamma_k^2\sigma_{k+1}^2d}, \label{eq:conv_rate_svi_risk}
\end{align}
	where, $Z_0(K) := \frac{3\gamma_0\lambda_0}{2}D^2 +  \frac{4M^2 + 3L^2D^2}{\gamma_0\lambda_0}\tsum_{k = 0}^K\gamma_k^2 + \frac{5\gamma_0\lambda_0d}{2}\tsum_{k =1}^K\sigma_{k+1}^2$ and $\Gamma_K := \tsum_{k = 0}^K \gamma_k$. %, $\A(\bS)$ is the output of Algorithm \ref{alg:alg0} on the dataset $\bS=\bigcup_t \bB_t \sim{\cal P}^n$, and expectation in the left hand side is taken over the dataset draws, random sample batch choices, as well as noise $\boldsymbol{\xi}_k$.
	
	Similarly, $\A$ applied to stochastic \eqref{spp_problem} problem achieves
	\begin{equation}\label{eq:conv_spp_risk}
		\SPgap{\A}{f} \le  \nu +  \frac{Z_0(K)}{\Gamma_K}  + M \sqrt{\frac{1}{\Gamma_K^2}\tsum_{k = 0}^K\gamma_k^2\sigma_{k+1}^2d}.
	\end{equation}
	%where output of $\A$ is $\A(\bS) = (x(\A; \bS), y(\A; \bS)) \in \X \times \Y$.% is the algorithm output.
\end{theorem}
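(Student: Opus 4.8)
The plan is to bound the strong VI gap of the output $\proj_\W(\wb w_K)$ by controlling the weighted sum $\sum_{k=0}^K\gamma_k\langle F(w),u_{k+1}-w\rangle$ and then accounting separately for the projection of the noisy ergodic iterate. I would start from the $\nu$-approximate VI condition \eqref{eq:int_rel_prox_1}, isolate $\langle F_{\bB_{k+1}}(u_{k+1}),u_{k+1}-w\rangle\le\lambda_k\langle u_{k+1}-w_k,w-u_{k+1}\rangle+\nu$, and apply the three-point identity $\langle u_{k+1}-w_k,w-u_{k+1}\rangle=\tfrac12\|w-w_k\|^2-\tfrac12\|w-u_{k+1}\|^2-\tfrac12\|u_{k+1}-w_k\|^2$. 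Multiplying by $\gamma_k$ and using the scaling hypothesis $\gamma_k\lambda_k=\gamma_0\lambda_0=:c$ gives a clean per-iteration bound. Monotonicity of $F$ then lets me pass to $\gamma_k\langle F(w),u_{k+1}-w\rangle$ (for SVI) up to the stochastic-error term $\gamma_k\langle F_{\bB_{k+1}}(u_{k+1})-F(u_{k+1}),u_{k+1}-w\rangle$, while substituting $u_{k+1}=w_{k+1}-\boldsymbol{\xi}_{k+1}$ turns $-\tfrac c2\|w-u_{k+1}\|^2$ into $-\tfrac c2\|w-w_{k+1}\|^2$ plus a cross-term $-c\langle w-w_{k+1},\boldsymbol{\xi}_{k+1}\rangle$ and $-\tfrac c2\|\boldsymbol{\xi}_{k+1}\|^2$, so that summation telescopes the distances down to $\tfrac c2\|w-w_0\|^2\le\tfrac c2 D^2$.

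The first difficulty is that $u_{k+1}$ is correlated with the fresh batch $\bB_{k+1}$, so the stochastic-error term is biased. I would resolve this by introducing the \emph{decoupled} proximal point $\wt u_{k+1}$, the exact solution of $\mathrm{VI}(\W,F(\cdot)+\lambda_k(\cdot-w_k))$, which is measurable with respect to the history $\mathcal F_k$ and independent of $\bB_{k+1}$. Splitting $F_{\bB_{k+1}}(u_{k+1})-F(u_{k+1})$ through $\wt u_{k+1}$, the two terms $F_{\bB_{k+1}}(u_{k+1})-F_{\bB_{k+1}}(\wt u_{k+1})$ and $F(\wt u_{k+1})-F(u_{k+1})$ are Lipschitz-controlled by $L\|u_{k+1}-\wt u_{k+1}\|$, and the stability of $\lambda_k$-strongly monotone VI solutions (comparing $F_{\bB_{k+1}}$ and $F$, which differ by at most $2M$) yields $\|u_{k+1}-\wt u_{k+1}\|\le 2M/\lambda_k+O(\sqrt{\nu/\lambda_k})$. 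Since $\gamma_k/\lambda_k=\gamma_k^2/c$, these contributions produce precisely the $\frac{4M^2+3L^2D^2}{c}\sum_k\gamma_k^2$ portion of $Z_0(K)$ after bounding the $M L D$ cross terms by Young's inequality.

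The main obstacle is the leftover centered term $\langle F_{\bB_{k+1}}(\wt u_{k+1})-F(\wt u_{k+1}),\wt u_{k+1}-w\rangle$ together with $\langle w-u_{k+1},\boldsymbol{\xi}_{k+1}\rangle$: each is a martingale difference for fixed $w$, but the supremum over $w\in\W$ destroys naive cancellation. I would handle both by the auxiliary (``ghost'') iterate device. Writing $\boldsymbol\zeta_{k+1}:=F_{\bB_{k+1}}(\wt u_{k+1})-F(\wt u_{k+1})$ (with $\|\boldsymbol\zeta_{k+1}\|\le 2M$ and $\EE[\boldsymbol\zeta_{k+1}\mid\mathcal F_k]=0$ by unbiasedness of the disjoint single-pass batches), I define $z_{k+1}=\proj_\W(z_k+\tfrac{\gamma_k}{c}\boldsymbol\zeta_{k+1})$, $z_0=w_0$, and use the projected-ascent regret inequality to obtain the pathwise bound $\sum_k\gamma_k\langle\boldsymbol\zeta_{k+1},w-z_k\rangle\le\tfrac c2 D^2+\tfrac1{2c}\sum_k\gamma_k^2\|\boldsymbol\zeta_{k+1}\|^2$, whereas the residual $\sum_k\gamma_k\langle\boldsymbol\zeta_{k+1},z_k-\wt u_{k+1}\rangle$ is $w$-free and vanishes in expectation. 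An analogous ghost iterate for $\boldsymbol{\xi}_{k+1}$, with step matched to the coefficient $c$, converts the noise cross-term into a further $\tfrac c2 D^2$ plus variance terms of order $c\,d\sum_k\sigma_{k+1}^2$. Collecting these bounded, $w$-independent quadratics (three copies of $\tfrac c2 D^2$, the $M^2/c$ and $L^2D^2/c$ terms, and $\tfrac{5cd}{2}\sum_k\sigma_{k+1}^2$ after bookkeeping) reproduces $\Gamma_K\nu+Z_0(K)$ upon taking $\EE\sup_w$.

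Finally I would assemble the bound. Dividing by $\Gamma_K$ and using $\langle F(w),\wb u_K-w\rangle=\frac1{\Gamma_K}\sum_k\gamma_k\langle F(w),u_{k+1}-w\rangle$ with $\wb u_K:=\frac1{\Gamma_K}\sum_k\gamma_k u_{k+1}\in\W$ bounds the gap of $\wb u_K$ by $\nu+Z_0(K)/\Gamma_K$. To pass to the true output I split $\langle F(w),\proj_\W(\wb w_K)-w\rangle=\langle F(w),\wb u_K-w\rangle+\langle F(w),\proj_\W(\wb w_K)-\wb u_K\rangle$ and bound the last inner product by $M\|\proj_\W(\wb w_K)-\wb u_K\|\le M\|\wb w_K-\wb u_K\|$, using nonexpansiveness of $\proj_\W$ and $\wb u_K=\proj_\W(\wb u_K)$; Jensen together with Proposition \ref{prop:bound_for_projected_iterate} then gives $\EE[M\|\wb w_K-\wb u_K\|]\le M\sqrt{\Gamma_K^{-2}\sum_k\gamma_k^2\sigma_{k+1}^2 d}$, the last term of \eqref{eq:conv_rate_svi_risk}. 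For the SSP claim \eqref{eq:conv_spp_risk} I would instead retain $\langle F(u_{k+1}),u_{k+1}-w\rangle$ in the per-iteration bound and invoke convexity--concavity, $f((u_{k+1})_x,y)-f(x,(u_{k+1})_y)\le\langle F(u_{k+1}),u_{k+1}-w\rangle$, so that Jensen over the weights $\gamma_k$ transfers the identical right-hand side to the saddle-point gap of the ergodic average over $\W=\X\times\Y$.
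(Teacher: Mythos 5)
Your proposal follows the same overall architecture as the paper's proof: the $\nu$-approximate VI condition plus the three-point identity for the proximal step, monotonicity of $F$, ghost (auxiliary projected) iterates to handle the two martingale-difference terms under the supremum over $w$, Proposition \ref{prop:bound_for_projected_iterate} with Jensen for the projected noisy average, and convexity--concavity for the SSP extension. The one genuinely different ingredient is how you decouple the bias term $\langle F(u_{k+1})-F_{\bB_{k+1}}(u_{k+1}),u_{k+1}-w\rangle$ from the fresh batch. The paper re-centers this error at the \emph{previous iterate} $w_k$ (which is $\mathcal F_k$-measurable), controls the two displaced pieces by $L$-Lipschitzness of $F$ and $F_{\bB_{k+1}}$ together with Young's inequality, and absorbs the resulting $\frac{\lambda_k}{6}\|u_{k+1}-w_k\|^2$ terms into the negative $-\frac{\lambda_k}{2}\|u_{k+1}-w_k\|^2$ slack already produced by the three-point identity; the leftover martingale term is then $\langle F(w_k)-F_{\bB_{k+1}}(w_k),w_k-w\rangle$. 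You instead re-center at the \emph{exact population proximal point} $\wt u_{k+1}$ and invoke a perturbation/stability bound for $\lambda_k$-strongly monotone VI solutions to get $\|u_{k+1}-\wt u_{k+1}\|\le 2M/\lambda_k+O(\sqrt{\nu/\lambda_k})$ --- the same mechanism the paper uses later for the sensitivity analysis in Proposition \ref{prop:l_2-sensitivity_prox}. Both routes are sound: the paper's is slightly more economical (no extra lemma, and the Lipschitz terms are absorbed by slack that is already present), while yours makes the ``extragradient-as-inexact-proximal-point'' intuition more explicit and isolates a cleaner martingale at a history-measurable point. The only caveat is bookkeeping: your stability bound injects an additional $O(\sqrt{\nu/\lambda_k})$ contribution that must be reabsorbed (e.g.\ by Young's inequality into the $L^2D^2/\lambda_k$ and $\nu$ buckets), so the exact numerical constants in $Z_0(K)$ would come out slightly different from those stated; the rate and the structure of the bound are unaffected.
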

\begin{proof} Let $w\in\W$. Then
	\begingroup
	\allowdisplaybreaks
	\begin{align}
		\inprod{F(w)}{w_{k+1}-w} &= \inprod{F(w)}{u_{k+1}-w} + \inprod{F(w)}{w_{k+1}-u_{k+1}}.\label{eq:int_rel_prox_7}
	\end{align}
We will analyze each term above separately. First, note that 
\begin{align}
	\inprod{F(w)}{w_{k+1}-u_{k+1}} &\le \frac{1}{2\lambda_k}\gnorm{F(w)}{}{2} + \frac{\lambda_k}{2}\gnorm{\boldsymbol{\xi}_{k+1}}{}{2} \le \frac{M^2}{2\lambda_k} + \frac{\lambda_k}{2}\gnorm{\boldsymbol{\xi}_{k+1}}{}{2}.\label{eq:int_rel_prox_6}
\end{align}
Note that 
\begin{align}
	\inprod{F(w)}{u_{k+1}-w} &\le \inprod{F(u_{k+1})}{u_{k+1}-w}\nonumber\\
	&= \inprod{F_{\bB_{k+1}}(u_{k+1})}{u_{k+1}-w} + \inprod{F(u_{k+1}) - F_{\bB_{k+1}}(u_{k+1})}{u_{k+1}-w}\nonumber\\
	&\le \lambda_k\inprod{u_{k+1}-w_k}{w - u_{k+1}} + \nu + \inprod{F(u_{k+1}) - F_{\bB_{k+1}}(u_{k+1})}{u_{k+1}-w}\nonumber\\
	&=\frac{\lambda_k}{2} \bracket{\gnorm{w-w_k}{}{2} - \gnorm{w-u_{k+1}}{}{2} - \gnorm{u_{k+1} - w_k}{}{2}} + \nu \nonumber\\
	&\qquad+ \inprod{F(u_{k+1}) - F_{\bB_{k+1}}(u_{k+1})}{u_{k+1}-w}\label{eq:int_rel_prox_2},
\end{align}
where first inequality follows from monotonicity and second inequality follows from \eqref{eq:int_rel_prox_1}.
Now note that 
\begin{align*}
	&\inprod{F(u_{k+1}) - F_{\bB_{k+1}}(u_{k+1})}{u_{k+1}-w} \\
	&\quad= \inprod{F(u_{k+1}) - F_{\bB_{k+1}}(u_{k+1})- [F(w_k) - F_{\bB_{k+1}}(w_{k})]}{u_{k+1}-w}\\
	&\quad\quad + \inprod{F(w_k) - F_{\bB_{k+1}}(w_{k})}{u_{k+1} - w_k} + 
	\inprod{F(w_k) - F_{\bB_{k+1}}(w_{k})}{w_k-w}\\
	&\quad \le \frac{\lambda_k}{6L^2}\gnorm{F(u_{k+1}) - F(w_k)}{}{2} + \frac{3L^2}{2\lambda_k}\gnorm{u_{k+1}-w}{}{2}\\
	&\quad\quad + \frac{\lambda_k}{6L^2}\gnorm{F_{\bB_{k+1}}(u_{k+1}) - F_{\bB_{k+1}}(w_{k})}{}{2} + \frac{3L^2}{2\lambda_k}\gnorm{u_{k+1}-w}{}{2}\\
	&\quad\quad+  \frac{3}{2\lambda_k}\gnorm{F(w_k) - F_{\bB_{k+1}}(w_{k})}{}{2} +  \frac{\lambda_k}{6} \gnorm{u_{k+1}- w_k}{}{2}+ 
	\inprod{F(w_k) - F_{\bB_{k+1}}(w_{k})}{w_k-w}\\
	&\quad \le \frac{\lambda_k}{2}\gnorm{u_{k+1}- w_k}{}{2} + \frac{3L^2}{\lambda_k}\gnorm{u_{k+1}-w}{}{2} + \frac{3}{2\lambda_k}\gnorm{F(w_k) - F_{\bB_{k+1}}(w_{k})}{}{2} \\
	&\quad\quad+ 
	\inprod{F(w_k) - F_{\bB_{k+1}}(w_{k})}{w_k-w},
\end{align*}
where last inequality follows from $L$-Lipschitz continuity of $F$ and $F_{\bB_{k+1}}$. 
Noting that $\gnorm{u_{k+1}-w}{}{} \le D$ for all $w \in \W$ and using the above bound in \eqref{eq:int_rel_prox_2}, we have
\begin{align}
	\inprod{F(w)}{u_{k+1}-w} &\le \inprod{F(u_{k+1})}{u_{k+1}-w} \nonumber\\
	&\le \frac{\lambda_k}{2}\bracket{ \gnorm{w-w_k}{}{2} - \gnorm{w-u_{k+1}}{}{2} } \nonumber\\
	&\quad+ \underbrace{\nu + \frac{3L^2D^2}{\lambda_k} + \frac{3}{2\lambda_k}\gnorm{F(w_k) - F_{\bB_{k+1}}(w_{k})}{}{2} + 
	\inprod{F(w_k) - F_{\bB_{k+1}}(w_{k})}{w_k-w}}_{E_k}
	%\nonumber\\
	%&\le \frac{\lambda_k}{2}\bracket{ \gnorm{w-w_k}{}{2} - \gnorm{w-u_{k+1}}{}{2} } \underbrace{ + \nu + \frac{3(2L^2D^2 + M^2)}{2\lambda_k}+ \inprod{F(w_k) - F_{\bB_{k+1}}(w_{k})}{w_k-w} }_{E_k}
	\label{eq:int_rel_prox_3},
\end{align}
Letting $u_0 := w_0$ and consequently $\boldsymbol{\xi}_0 = {\bf 0}$, we have from \eqref{eq:int_rel_prox_3}
\begin{align}
	\inprod{F(w)}{u_{k+1}-w} &\le 	\inprod{F(u_{k+1})}{u_{k+1}-w}\nonumber\\  &\le \frac{\lambda_k}{2} \bracket{\gnorm{w-u_k}{}{2} - \gnorm{w-u_{k+1}}{}{2} + 2\inprod{w-u_k}{u_k-w_k} + \gnorm{u_k-w_k}{}{2}} + E_k \nonumber\\
	&=\frac{\lambda_k}{2} \bracket{\gnorm{w-u_k}{}{2} - \gnorm{w-u_{k+1}}{}{2} } + \lambda_k \inprod{\boldsymbol{\xi}_k}{u_k-w} + \frac{1}{2}\lambda_k\gnorm{\boldsymbol{\xi}_k}{}{2} + E_k \label{eq:int_rel_prox_5}
\end{align}
Let us define an auxiliary sequence $\{z_k\}_{k \ge 0}$, where $z_0 = w_0$ and for all $ k \ge 1$, we have
\[z_k = \proj_{\W}[z_{k-1} - \boldsymbol{\xi}_k].\]
Then, due to the mirror-descent bound, we have
\begin{equation}
	\tsum_{k =1}^K \inprod{\boldsymbol{\xi}_k}{z_k - w} \le \frac{1}{2}\gnorm{w-w_0}{}{2} - \frac{1}{2}\gnorm{w-z_K}{}{2} + \tsum_{k =1}^K \gnorm{\boldsymbol{\xi}_k}{}{2}.\label{eq:int_rel_prox_4}
\end{equation}
Moreover, noting that 
\begin{align*}
	\inprod{\boldsymbol{\xi}_k}{u_k - z_k}  &= \inprod{\boldsymbol{\xi}_k}{u_k - z_{k-1}}+\inprod{\boldsymbol{\xi}_k}{ z_{k-1} -z_k} \le \inprod{\boldsymbol{\xi}_k}{u_k - z_{k-1}} + \gnorm{\boldsymbol{\xi}_k}{}{}\gnorm{z_k - z_{k-1}}{}{}\\
	&\le \inprod{\boldsymbol{\xi}_k}{u_k - z_{k-1}} + \gnorm{\boldsymbol{\xi}_k}{}{2}.
\end{align*}
Combining above relation with \eqref{eq:int_rel_prox_4}, we have
\begin{equation}\label{eq:int_rel_prox_19}
	\tsum_{k =1}^K\inprod{\boldsymbol{\xi}_k}{u_k - w} \le \frac{1}{2}\gnorm{w-w_0}{}{2} + 2\tsum_{k =1}^K\gnorm{\boldsymbol{\xi}_k}{}{2} + \tsum_{k =1}^K\inprod{\boldsymbol{\xi}_k}{u_k - z_{k-1}}.
\end{equation}
Now multiplying \eqref{eq:int_rel_prox_7} by $\gamma_k$ then summing from $k = 0$ to $K$; noting the definition of $\wb{w}_K$ and $\Gamma_K$; and using \eqref{eq:int_rel_prox_6}, \eqref{eq:int_rel_prox_5} and \eqref{eq:int_rel_prox_19} along with assumption \eqref{eq:step_size_condition} implies
\begin{align}
	\Gamma_k\inprod{F(w)}{\wb{w}_K - w} &\le \gamma_0\lambda_0 \gnorm{w-w_0}{}{2} \nonumber\\
	&\quad +\tsum_{k = 0}^K \bracket{\frac{\gamma_kM^2}{2\lambda_k} + \frac{\gamma_0\lambda_0}{2} \gnorm{\boldsymbol{\xi}_{k+1}}{}{2}  + \frac{5\gamma_0\lambda_0}{2} \gnorm{\boldsymbol{\xi}_k}{}{2} +\gamma_kE_k + \gamma_0\lambda_0\inprod{\boldsymbol{\xi}_k}{u_k - z_{k-1}}}\nonumber\\
	\Rightarrow 	\Gamma_k\inprod{F(w)}{\proj_{\W}[\wb{w}_K] - w} &\le \gamma_0\lambda_0 \gnorm{w-w_0}{}{2} +\Gamma_K \inprod{F(w)}{\proj_{\W}[\wb{w}_K] - \wb{w}_K} \nonumber\\ 
	&\quad +\tsum_{k = 0}^K \bracket{\frac{\gamma_kM^2}{2\lambda_k} + \frac{\gamma_0\lambda_0}{2} \gnorm{\boldsymbol{\xi}_{k+1}}{}{2}  + \frac{5\gamma_0\lambda_0}{2} \gnorm{\boldsymbol{\xi}_k}{}{2} +\gamma_kE_k + \gamma_0\lambda_0\inprod{\boldsymbol{\xi}_k}{u_k - z_{k-1}}}\nonumber\\
	&\le \gamma_0\lambda_0 \gnorm{w-w_0}{}{2} + \Gamma_KM\dist_\W(\wb{w}_K) \nonumber\\ 
	&\quad +\tsum_{k = 0}^K \bracket{\frac{\gamma_kM^2}{2\lambda_k} + \frac{\gamma_0\lambda_0}{2} \gnorm{\boldsymbol{\xi}_{k+1}}{}{2}  + \frac{5\gamma_0\lambda_0}{2} \gnorm{\boldsymbol{\xi}_k}{}{2} +\gamma_kE_k + \gamma_0\lambda_0\inprod{\boldsymbol{\xi}_k}{u_k - z_{k-1}}}\label{eq:int_rel_prox_12}
\end{align}
Now note that 
\begin{align}
	\tsum_{k = 0}^K \gamma_kE_k &= \Gamma_K \nu + \frac{3L^2D^2}{\gamma_0\lambda_0}\tsum_{k = 0}^K \gamma_k^2 + \frac{3}{2\gamma_0\lambda_0}\tsum_{k = 0}^K \gamma_k^2\gnorm{F_{\bB_{k+1}}(w_{k})-F(w_k)}{}{2}\nonumber\\
	&\quad+ \gamma_0\lambda_0 \tsum_{k = 0}^K \inprod{\frac{1}{\lambda_k}(F(w_k) - F_{\bB_{k+1}}(w_{k}))}{w_k-w}\label{eq:int_rel_prox_9}
\end{align}
Define $\boldsymbol{\Delta}_k := \frac{1}{\lambda_k}(F(w_k) - F_{\bB_{k+1}}(w_{k}))$. Note that $\EE_{\bB_{k+1}}[\boldsymbol{\Delta}_k| w_k] = 0$. Moreover, define an auxiliary sequence $\{h_k\}_{k \ge 0}$ with $h_0 := w_0$ and 
\[h_{k+1} := \proj_{\W}[h_k - \boldsymbol{\Delta}_k].\] Then due to mirror descent bound, we have
\begin{align}
	\tsum_{k = 0}^K \inprod{\boldsymbol{\Delta}_k}{h_{k+1}-w} \le \frac{1}{2}\gnorm{w-w_0}{}{2} - \frac{1}{2}\gnorm{w-h_{K+1}}{}{2} + \tsum_{k = 0}^K\gnorm{\boldsymbol{\Delta}_k}{}{2}.\label{eq:int_rel_prox_8}
\end{align}
Moreover, 
\begin{align*}
	\inprod{\boldsymbol{\Delta}_k}{h_k-h_{k+1}} \le \gnorm{\boldsymbol{\Delta}_k}{}{}\gnorm{h_k-h_{k+1}}{}{} \le \gnorm{\boldsymbol{\Delta}_k}{}{2}.
\end{align*}
Using above relation along with \eqref{eq:int_rel_prox_8}, we have %\cnote{Wasn't there a sum missing below? I left it in red}
\begin{align*}
	\tsum_{k = 0}^K\inprod{\boldsymbol{\Delta}_k}{h_k - w} &= \tsum_{k = 0}^K[ \inprod{\boldsymbol{\Delta}_k}{h_{k+1} - w} + \inprod{\boldsymbol{\Delta}_k}{h_k - h_{k+1}}]\\
	&\le \frac{1}{2}\gnorm{w-w_0}{}{2} - \frac{1}{2}\gnorm{w-h_{K+1}}{}{2} + 2\tsum_{k = 0}^K\gnorm{\boldsymbol{\Delta}_k}{}{2}\\
	\Rightarrow\tsum_{k = 0}^K\inprod{\boldsymbol{\Delta}_k}{w_k - w}&= \tsum_{k = 0}^K[ \inprod{\boldsymbol{\Delta}_k}{w_k-h_k} + \inprod{\boldsymbol{\Delta}_k}{h_k-w}]\\
	&\le \tsum_{k=0}^K\inprod{\boldsymbol{\Delta}_k}{w_k-h_k} + \frac{1}{2}\gnorm{w-w_0}{}{2} - \frac{1}{2}\gnorm{w-h_{K+1}}{}{2} + 2\tsum_{k = 0}^K\gnorm{\boldsymbol{\Delta}_k}{}{2}.
\end{align*}
Using the above relation in \eqref{eq:int_rel_prox_9}, we have
\begin{align}
	\tsum_{k = 0}^K \gamma_kE_k &= \Gamma_K \nu + \frac{3L^2D^2}{\gamma_0\lambda_0}\tsum_{k = 0}^K \gamma_k^2 + \frac{3}{2\gamma_0\lambda_0}\tsum_{k = 0}^K \gamma_k^2\gnorm{F_{\bB_{k+1}}(w_{k})-F(w_k)}{}{2}\nonumber\\
	&\quad+\gamma_0\lambda_0 \tsum_{k =0}^K \inprod{\boldsymbol{\Delta}_k}{w_k-w} \nonumber\\
	&\le \frac{\gamma_0\lambda_0}{2}\gnorm{w-w_0}{}{2} + \Gamma_K\nu \frac{3L^2D^2}{\gamma_0\lambda_0}\tsum_{k = 0}^K \gamma_k^2 + \frac{3}{2\gamma_0\lambda_0}\tsum_{k = 0}^K \gamma_k^2\gnorm{F_{\bB_{k+1}}(w_{k})-F(w_k)}{}{2}\nonumber\\ 
	&\quad+ 2\gamma_0\lambda_0\tsum_{k = 0}^{K} \gnorm{\boldsymbol{\Delta}_k}{}{2} + \gamma_0\lambda_0\tsum_{k = 0}^K \inprod{\boldsymbol{\Delta}_k}{w_k-h_k}.\label{eq:int_rel_prox_13}
\end{align}
Finally, note that for all valid $k$, we have
\begin{align}
	\EE[\gnorm{\bg_k}{}{2}] &= \sigma_k^2d\label{eq:int_rel_prox_10},\\
	\EE[\gnorm{\boldsymbol{\Delta}_k}{}{2}] &=\EE_{w_k}[\EE_{\bB_{k+1}}[\gnorm{\boldsymbol{\Delta}_k}{}{2}| w_k]] \nonumber\\
	&= \frac{1}{\lambda_k^2}\EE_{w_k} \EE_{\bB_{k+1}}\bracket{\gnorm{F_{\bB_{k+1}}(w_{k}) - F(w_k)}{}{2} | w_k}\nonumber\\
	&\le \frac{1}{\lambda_k^2}\EE_{w_k}\EE_{\bB_{k+1}}\gnorm{F_{\bB_{k+1}}(w_{k})}{}{2} \le \frac{M^2}{\lambda_k^2}\label{eq:int_rel_prox_11},\\
	\EE[\inprod{\boldsymbol{\Delta}_k}{w_k-h_k}] &= \EE[\inprod{\EE[\boldsymbol{\Delta}_k|w_k, h_k]}{w_k-h_k}] = 0 \label{eq:int_rel_prox_14},\\
	\EE[\inprod{\boldsymbol{\xi}_k}{u_k-z_{k-1}}] &= \EE[\inprod{\EE[\boldsymbol{\xi}_k| u_k, z_{k-1}]}{u_k-z_{k-1}}] = 0\label{eq:int_rel_prox_15},
\end{align}
where, in \eqref{eq:int_rel_prox_11}, we used the fact that $F_{\bbeta})$ is $M$-bounded for all $\boldsymbol{\beta} \in \bS$. 

Now, using \eqref{eq:int_rel_prox_13} in relation \eqref{eq:int_rel_prox_12}, noting the bound on $\dist_\W(\wb{w}_K)$ from Proposition \ref{prop:bound_for_projected_iterate} (in particular \eqref{eq:int_rel_prox_24}), taking supremum with respect to $w \in \W$, then taking expectation and noting \eqref{eq:int_rel_prox_10}-\eqref{eq:int_rel_prox_15}, we have
\begin{align*}
	\Gamma_k\EE\sup_{w \in \W}\inprod{F(w)}{\proj_{\W}[\wb{w}_K] - w} &\le \frac{3\gamma_0\lambda_0}{2} D^2 + \tsum_{k =0}^K\bracket{\frac{\gamma_k(4M^2+3L^2D^2)}{\lambda_k} + \frac{5\gamma_0\lambda_0}{2}\sigma_{k+1}^2d+ \gamma_k\nu} \\
	&\quad+ \Gamma_K M \sqrt{\frac{1}{\Gamma_K^2}\tsum_{k = 0}^K\gamma_k^2\sigma_{k+1}^2d}.\\
	\Rightarrow \EE\sup_{w \in \W}\inprod{F(w)}{\proj_{\W}[\wb{w}_K] - w} &\le \nu +  \frac{1}{\Gamma_K} [\frac{3\gamma_0\lambda_0}{2}D^2 +  \frac{4M^2 + 3L^2D^2}{\gamma_0\lambda_0}\tsum_{k = 0}^K\gamma_k^2 + \frac{5\gamma_0\lambda_0d}{2}\tsum_{k =1}^K\sigma_{k+1}^2 ]\\
	&\quad + M \sqrt{\frac{1}{\Gamma_K^2}\tsum_{k = 0}^K\gamma_k^2\sigma_{k+1}^2d},
\end{align*}
where in the first inequality, we used the fact that $\EE[\dist_\W(\wb{w}_K)] \le \sqrt{\EE[\dist_\W(\wb{w}_K)^2]}$. Hence, we conclude the proof of \eqref{eq:conv_rate_svi_risk}. 

Now, we extend this for \eqref{spp_problem}. Denote $u^{k+1} = (\wt{x}^{k+1}, \wt{y}^{k+1})$. Then, we have
\begin{align*}
	\inprod{F(u_{k+1})}{u_{k+1} -w}&= \inprod{\grad_x f(\wt{x}_{k+1}, \wt{y}_{k+1} )}{\wt{x}_{k+1} - x} + \inprod{-\grad_y f(\wt{x}_{k+1}, \wt{y}_{k+1} )}{\wt{y}_{k+1} - y}\\
	&\ge f(\wt{x}_{k+1}, \wt{y}_{k+1}) - f(x, \wt{y}_{k+1})  + [-f(\wt{x}_{k+1}, \wt{y}_{k+1}) + f(\wt{x}_{k+1}, y)]\\
	&= f(\wt{x}_{k+1}, y) - f(x, \wt{y}_{k+1}). 
\end{align*}
	Using the above in \eqref{eq:int_rel_prox_5}, we obtain,
\begin{align*}
	f(\wt{x}_{k+1}, y) - f(x, \wt{y}_{k+1}) &\le \frac{\lambda_k}{2} \bracket{\gnorm{w-u_k}{}{2} - \gnorm{w-u_{k+1}}{}{2} } + \lambda_k \inprod{\boldsymbol{\xi}_k}{u_k-w} + \frac{1}{2}\lambda_k\gnorm{\boldsymbol{\xi}_k}{}{2} + E_k. 
	%	\\
	%	\Rightarrow
	%	f(x_{k+1}, y; \bS) -f(x, y_{k+1}; \bS) &\le \frac{\lambda_k}{2} \bracket{\gnorm{w-\wt{w}_k}{}{2} - \gnorm{w-u_{k+1}}{}{2} } + \lambda_k \inprod{\boldsymbol{\xi}_k}{\wt{w}_k-w} + \frac{1}{2}\lambda_k\gnorm{\boldsymbol{\xi}_k}{}{2} + E_k\\
	%	&\qquad + [f(x_{k+1}, y; \bS) - f(\wt{x}_{k+1}, y; \bS)] - [f(x, y_{k+1}; \bS) - f(x, \wt{y}_{k+1}; \bS)]\\ 
	%	&\le \frac{\lambda_k}{2} \bracket{\gnorm{w-\wt{w}_k}{}{2} - \gnorm{w-u_{k+1}}{}{2} } + \lambda_k \inprod{\boldsymbol{\xi}_k}{\wt{w}_k-w} + \frac{1}{2}\lambda_k\gnorm{\boldsymbol{\xi}_k}{}{2} + E_k\\
\end{align*}
Now, using Proposition \ref{prop:bound_for_projected_iterate} to bound the distance between points $\tfrac{1}{\Gamma_K}(\tsum_{k =0}^K\gamma_k\wt{x}_{k+1}, \tsum_{k =0}^K\gamma_k\wt{y}_{k+1})$ and $(\Pi_\X[\wb{x}_K], \Pi_\Y[\wb{y}_K])$ and using Jensen's inequality to conclude that 
\begin{align*}
	\frac{1}{\Gamma_K}\tsum_{k = 0}^K\gamma_k [f(\wt{x}_{k+1}, y) - f(x, \wt{y}_{k+1})] &\ge f(\tfrac{1}{\Gamma_K}\tsum_{k = 0}^K\gamma_k \wt{x}_{k+1}, y) - f(x, \tfrac{1}{\Gamma_K} \tsum_{k = 0}^K\gamma_k\wt{y}_{k+1})]\\
	&\ge f(\wb{x}_K,y) - f(x, \wb{y}_K) - M\gnorm*{\begin{bmatrix}
			\tfrac{1}{\Gamma_K}\tsum_{k = 0}^K\gamma_k \wt{x}_{k+1} - \proj_\X[\wb{x}_K]\\
			\tfrac{1}{\Gamma_K}\tsum_{k = 0}^K\gamma_k \wt{y}_{k+1} - \proj_\Y[\wb{y}_K]
	\end{bmatrix}}{}{}
\end{align*} and retracing the steps of this proof from \eqref{eq:int_rel_prox_5}, we obtain \eqref{eq:conv_spp_risk}. Hence, we conclude the proof.
\endgroup
\end{proof}

\subsection{Differential privacy of the NISPP method}
First, we show a simple bound on $\ell_2$-sensitivity for updates of NISPP method.%Algorithm \ref{alg:alg0}. 
\begin{proposition}\label{prop:l_2-sensitivity_prox}
	Suppose $\nu \le \frac{2M^2}{\lambda_kB_{k+1}^2}$ then $\ell_2$-sensitivity of updates of Algorithm \ref{alg:alg0} %, defined as $p_{k+1} := \gnorm{w_{k+1}-w'_{k+1}}{}{}$, satisfies 
	is at most $\frac{4M}{\lambda_kB_{k+1}}$
	%\[ p_{k+1} \le \frac{2M}{\lambda_kB_{k+1}} + \sqrt{\frac{2\nu}{\lambda_k}}\le \frac{4M}{\lambda_kB_{k+1}} ,\] 
	where $B_{k+1} = \abs{\bB_{k+1}}$ is the batch size of $k$-th iteration.
\end{proposition}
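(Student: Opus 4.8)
The plan is to exploit the $\lambda_k$-strong monotonicity that the proximal term $\lambda_k(\cdot-w_k)$ injects into the operator, which makes the (inexact) strong VI solution robust against the single-datapoint perturbation of the batch. The quantity we must control is the sensitivity of the pre-noise update $u_{k+1}$, holding the previous iterate $w_k$ fixed (as it is in the moments-accountant conditioning). So fix $w_k$, take neighboring datasets $\bS\simeq\bS'$, and let $\bB:=\bB_{k+1}$, $\bB':=\bB_{k+1}'$ be the corresponding batches; since $\bS,\bS'$ differ in a single point, $\bB$ and $\bB'$ differ in at most one entry. Write $G(\cdot)=F_{\bB}(\cdot)+\lambda_k(\cdot-w_k)$ and $G'(\cdot)=F_{\bB'}(\cdot)+\lambda_k(\cdot-w_k)$, and let $u,u'\in\W$ be the respective $\nu$-approximate strong VI solutions from \eqref{eq:int_rel_prox_1}.

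First I would instantiate the approximate VI inequality \eqref{eq:int_rel_prox_1} for $u$ at the test point $w=u'$, and the one for $u'$ at $w=u$, and add them. This gives $\inprod{G'(u')-G(u)}{u'-u}\le 2\nu$. Decomposing $G'(u')-G(u)=[G'(u')-G'(u)]+[F_{\bB'}(u)-F_{\bB}(u)]$ and using that $G'$ is $\lambda_k$-strongly monotone (the monotone part $F_{\bB'}$ contributes a nonnegative term and the proximal part contributes $\lambda_k\gnorm{u-u'}{}{2}$) yields
\[
\lambda_k\gnorm{u-u'}{}{2}\le 2\nu + \gnorm{F_{\bB}(u)-F_{\bB'}(u)}{}{}\,\gnorm{u-u'}{}{}.
\]

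Next I would bound the operator perturbation. Because $\bB$ and $\bB'$ differ in a single datapoint and each $F_{\bbeta}$ is $M$-bounded, the averaged operators satisfy $\gnorm{F_{\bB}(u)-F_{\bB'}(u)}{}{}\le 2M/B_{k+1}$. Writing $s:=\gnorm{u-u'}{}{}$ and $a:=2M/B_{k+1}$, the bound above becomes the scalar quadratic inequality $\lambda_k s^2-as-2\nu\le 0$, whose nonnegative solutions obey $s\le (a+\sqrt{a^2+8\lambda_k\nu})/(2\lambda_k)$.

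Finally I would invoke the hypothesis $\nu\le 2M^2/(\lambda_k B_{k+1}^2)$, which gives $8\lambda_k\nu\le 16M^2/B_{k+1}^2=4a^2$, hence $\sqrt{a^2+8\lambda_k\nu}\le\sqrt5\,a$, so
\[
s\le \frac{(1+\sqrt5)\,a}{2\lambda_k}=\frac{(1+\sqrt5)\,M}{\lambda_k B_{k+1}}\le\frac{4M}{\lambda_k B_{k+1}},
\]
since $1+\sqrt5<4$; taking the supremum over $\bS\simeq\bS'$ yields the claim. The main obstacle is the careful assembly of the inexactness $\nu$ together with the strong-monotonicity cancellation into a clean scalar quadratic, and then verifying that the constant forced by the assumed bound on $\nu$ stays below the target constant $4$; the monotonicity step and the $M$-boundedness estimate are otherwise routine.
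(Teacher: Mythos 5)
Your proof is correct and follows essentially the same route as the paper's: both instantiate the approximate VI condition \eqref{eq:int_rel_prox_1} at the swapped points, exploit the $\lambda_k$-strong monotonicity contributed by the proximal term, bound the single-datapoint operator perturbation by $2M/B_{k+1}$, and solve the resulting scalar quadratic in $\|u_{k+1}-u_{k+1}'\|$. The only (immaterial) differences are that you evaluate the perturbation at $u$ rather than $u'$ and carry the slightly sharper constant $1+\sqrt{5}$ before relaxing to $4$.
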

\begin{proof}
	Let $w_k$ be an iterate in the start of $k$-th iteration of Algorithm \ref{alg:alg0}. Suppose $\bB_{k+1}$ and $\bB_{k+1}'$ be two different batches used in $k$-th iteration to obtain $u_{k+1}$ and $u_{k+1}'$, respectively. Also note that $\bB_{k+1}$ and $\bB_{k+1}'$ differ in only single datapoint. Then, due to \eqref{eq:int_rel_prox_1}, we have for all $w \in \W$
	\begin{align*}
		\inprod{F_{\bB_{k+1}}(u_{k+1}) + \lambda_k(u_{k+1} - w_k) }{w - u_{k+1}} &\ge -\nu\\
		\inprod{F_{\bB_{k+1}'}(u_{k+1}') + \lambda_k(u_{k+1}' - w_k) }{w - u_{k+1}'} &\ge -\nu
	\end{align*}
	Using $w = u_{k+1}'$ in the first relation and $w = u_{k+1}$ in the second relation above and then summing, we obtain
	\begin{align*}
		\inprod{ F_{\bB_{k+1}}(u_{k+1}) -F_{\bB_{k+1}'}(u_{k+1}') }{ u_{k+1} - u_{k+1}'} &\le 2\nu - \lambda_k \gnorm{u_{k+1} - u_{k+1}'}{}{2}\\
		\Rightarrow \inprod{F_{\bB_{k+1}}(u_{k+1}) - F_{\bB_{k+1}}(u_{k+1}') }{ u_{k+1} - u_{k+1}'}&\le \inprod{  F_{\bB_{k+1}'}(u_{k+1}')- F_{\bB_{k+1}}(u_{k+1}') }{ u_{k+1} - u_{k+1}'} \\
		&\quad+ 2\nu - \lambda_k \gnorm{u_{k+1} - u_{k+1}'}{}{2}
	\end{align*}
	Now, noting that $F_{\bB_{k+1}}$ is a monotone operator and denoting $\mathbf{a}_{k+1} := \gnorm{ F_{\bB_{k+1}'}(u_{k+1}')- F_{\bB_{k+1}}(u_{k+1}')}{}{}$, $p_{k+1} := \gnorm{w_{k+1} - w'_{k+1}}{}{} = \gnorm{u_{k+1} - u'_{k+1}}{}{}$ we have
	\begin{align}
		0 &\le \inprod{  F_{\bB_{k+1}'}(u_{k+1}')- F_{\bB_{k+1}}(u_{k+1}') }{ u_{k+1} - u_{k+1}'}+ 2\nu - \lambda_k \gnorm{u_{k+1} - u_{k+1}'}{}{2}\nonumber\\
		&\le \mathbf{a}_{k+1} p_{k+1} - \lambda_k p_{k+1}^2 + 2\nu. \label{eq:int_rel_prox_16}
	\end{align}
	Finally noting that if $\boldsymbol{\beta}$ and $\boldsymbol{\beta}'$ are the differing datapoints in $\bB_{k+1}$ and $\bB_{k+1}'$, then 
	\begin{align*}
		\mathbf{a}_{k+1} &= \frac{1}{B_{k+1}}\gnorm{F_{\bbeta'}(u'_{k+1}) - F_{\bbeta}(u'_{k+1}) }{}{} \le \frac{2M}{B_{k+1}}.
	\end{align*}
	Using the above relation in \eqref{eq:int_rel_prox_16} and noting that $\ell_2$-sensitivity $p_{k+1} = \gnorm{w_{k+1}-w_{k+1}'}{}{} = \gnorm{u_{k+1} -u_{k+1}'}{}{}$, we have, $p_{k+1}$ satisfies
	\[p_{k+1}^2 -\frac{2M}{\lambda_k B_{k+1}} p_{k+1} - \frac{2\nu}{\lambda_k} \le 0.\]
	This implies
	\[p_{k+1} \le \frac{M}{\lambda_kB_{k+1}} + \sqrt{\frac{M^2}{\lambda_k^2B_{k+1}^2} + \frac{2\nu}{\lambda_k}} \le \frac{2M}{\lambda_kB_{k+1}} + \sqrt{\frac{2\nu}{\lambda_k}}.\]
	Setting $\nu \le \frac{2M^2}{\lambda_kB_{k+1}^2}$, we have $p_{k+1} \le \frac{4M}{\lambda_kB_{k+1}}.$
	Hence, we conclude the proof.
\end{proof}
Using the $\ell_2$-sensitivity result above along with Proposition \ref{propos:Gauss_mech} and \ref{propos:parallel_comp}, we immediately obtain the following:% Now we prove differential privacy for this algorithm. Let $\bS$ and $\bS'$ be two neighboring datasets which differ in their $i$-th data point.Denote output of Algorithm \ref{alg:alg0} for datasets $\bS$ and $\bS'$ by $\A(\bS)$ and $\A(\bS')$, respectively. Also denote the trajectories of Algorithm \ref{alg:alg0} on the two datasets by $\{w_{k+1}\}$ and $\{w_{k+1}'\}$, respectively. Finally the  probability distribution of algorithm at $k$-the iteration be $\mu_{k+1}$ and $\mu_{k+1}'$, respectively.  Then, we show the following result. 
%\dnote{convert this statement into $(\theta, \delta)$-DP}
\begin{proposition}\label{thm:dp_proof_prox_onepass}
	Algorithm \ref{alg:alg0} with batch sizes $(B_{k+1})_{k \in [K]_0}$, parameters $(\lambda_k)_{k \in [K]_0}$, variance $\sigma_{k+1}^2 = \frac{32M^2}{\lambda_k^2B_{k+1}^2}\frac{\ln(1/\eta)}{\eps^2}$ and $\nu$ satisfying assumptions of Proposition \ref{prop:l_2-sensitivity_prox} is $(\eps, \eta)$-differentially private. %\cnote{We should add the value of $\nu$ here. O.w.~the statement is ambiguous}
\end{proposition}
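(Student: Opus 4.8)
The plan is to recognize each iteration of Algorithm \ref{alg:alg0} as a single instance of the Gaussian mechanism with correctly calibrated noise, and then glue the iterations together by parallel composition, exploiting that in the single-pass regime the batches are disjoint. All three ingredients have already been established, so the argument is essentially an assembly of Propositions \ref{prop:l_2-sensitivity_prox}, \ref{propos:Gauss_mech}, and \ref{propos:parallel_comp}.

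First I would fix an index $k$ and an arbitrary value of the incoming iterate $w_k$, and regard the map $\bB_{k+1}\mapsto u_{k+1}$ defined in line 4 (the $\nu$-approximate VI solution) as a deterministic function of the batch. Under the standing hypothesis $\nu\le \frac{2M^2}{\lambda_k B_{k+1}^2}$, Proposition \ref{prop:l_2-sensitivity_prox} certifies that this map has $\ell_2$-sensitivity at most $s_k:=\frac{4M}{\lambda_k B_{k+1}}$, with probability one. The noise added in line 5 is $\boldsymbol{\xi}_{k+1}\sim\mathcal{N}(0,\sigma_{k+1}^2\mathbb{I}_d)$, so the update $w_{k+1}=u_{k+1}+\boldsymbol{\xi}_{k+1}$ is exactly the Gaussian mechanism applied to $\bB_{k+1}\mapsto u_{k+1}$.

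Next I would check the calibration. With the prescribed variance $\sigma_{k+1}^2=\frac{32M^2}{\lambda_k^2 B_{k+1}^2}\frac{\ln(1/\eta)}{\eps^2}$ we have $\sigma_{k+1}^2=2s_k^2\ln(1/\eta)/\eps^2$, which is precisely the variance required in Proposition \ref{propos:Gauss_mech} to make the Gaussian mechanism of sensitivity $s_k$ be $(\eps,\eta)$-DP. Thus each iteration, viewed as a mechanism $\mathcal{A}_{k+1}(\bB_{k+1};w_1,\dots,w_k)$ that outputs $w_{k+1}$, is $(\eps,\eta)$-DP in its first argument $\bB_{k+1}$. Finally I would invoke parallel composition: in the single-pass version the batches $\bB_1,\dots,\bB_{K+1}$ are disjoint subsets of $\bS$, and each $w_{k+1}$ is produced from $\bB_{k+1}$ together with the earlier outputs $w_1,\dots,w_k$, which is exactly the adaptive structure of Proposition \ref{propos:parallel_comp}. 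Hence the whole transcript $(w_1,\dots,w_{K+1})$ is $(\max_k\eps,\max_k\eta)=(\eps,\eta)$-DP, and since the returned point $\proj_{\W}(\wb{w}_K)$ is a deterministic function of this transcript, the post-processing property of DP preserves the guarantee.

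The only point that requires genuine care—and the place I would expect a careful reader to scrutinize—is the \emph{uniformity} of the sensitivity bound over the conditioning on the previously produced iterates. Parallel (adaptive) composition demands that each $\mathcal{A}_{k+1}$ be $(\eps,\eta)$-DP in $\bB_{k+1}$ \emph{for every fixed} prefix $w_1,\dots,w_k$. This is exactly what Proposition \ref{prop:l_2-sensitivity_prox} provides, since its sensitivity bound $\frac{4M}{\lambda_k B_{k+1}}$ is derived for an arbitrary starting iterate $w_k$ and does not depend on how $w_k$ was generated; the accompanying noise level $\sigma_{k+1}^2$ likewise depends only on $k$. With that uniformity in hand, the three cited results combine immediately to give the claim.
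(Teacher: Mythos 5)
Your proposal is correct and follows exactly the route the paper takes: the paper's own "proof" is the single sentence preceding the proposition, which combines the sensitivity bound of Proposition \ref{prop:l_2-sensitivity_prox} with the Gaussian mechanism calibration (Proposition \ref{propos:Gauss_mech}, noting $2s_k^2=32M^2/[\lambda_k^2B_{k+1}^2]$) and parallel composition over disjoint batches (Proposition \ref{propos:parallel_comp}). Your write-up simply makes explicit the details the paper leaves implicit, including the uniformity of the sensitivity bound over the prefix $w_1,\dots,w_k$ and the post-processing step for $\proj_{\W}(\wb{w}_K)$.
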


Now, we provide a policy for setting $\gamma_k,  \lambda_k$ and $B_{k+1}$ to obtain population risk bounds for DP-SVI and DP-SSP problem by the NISPP method.
\begin{corollary}\label{cor:stepsize_policy_prox_svi}
	Algorithm \ref{alg:alg0} with disjoint batches $\bB_{k+1}$ is of size $B_{k+1} = B := n^{1/3}$ for all $k \ge 0$ and the following parameters
	\begin{alignat*}{2}
		\gamma_k &= 1, \hspace{8em}
		&&\lambda_k = \lambda_0 := \max\{\frac{M}{D}, L\}\max\braces{n^{1/3}, \frac{\sqrt{d\ln(1/\eta)}}{\eps}},\\
		\sigma_{k+1}^2 &= \frac{32M^2}{ B\lambda_0} \frac{\ln(1/\eta)}{\eps^2},
		&&\nu = \frac{2M^2}{\lambda_0B^2},
	\end{alignat*}
	 is  $(\eps, \eta)$-differentially private and achieves expected SVI-gap (SSP-gap, respectively)
	\[%\EE_{\A, \bS} \big[\VIgap{\A(\bS)}{F}\big] \le
	 O\paran*{(M+LD)D\bracket*{\frac{1}{n^{1/3}} + \frac{\sqrt{d\ln(1/\eta)}}{\eps n^{2/3}}} %+\frac{LD^2}{n^{1/3}} 
	 }.
	 \]
\end{corollary}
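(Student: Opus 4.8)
The statement bundles two claims — that the prescribed parameters yield $(\eps,\eta)$-differential privacy, and that they achieve the stated expected gap — and I would establish them separately, since the privacy claim is essentially a verification and the risk claim is a substitution into Theorem~\ref{thm:conv_prox_vi} followed by careful bookkeeping.

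\textbf{Privacy.} The plan is to invoke Proposition~\ref{thm:dp_proof_prox_onepass} after checking its hypotheses one by one. First, the choice $\nu = 2M^2/(\lambda_0 B^2)$ meets the requirement $\nu \le 2M^2/(\lambda_k B_{k+1}^2)$ of Proposition~\ref{prop:l_2-sensitivity_prox} with equality, since $\lambda_k\equiv\lambda_0$ and $B_{k+1}\equiv B$; hence every pre-noise iterate has $\ell_2$-sensitivity at most $4M/(\lambda_0 B)$. Second, the variance is set to the value $\sigma_{k+1}^2 = 32M^2\ln(1/\eta)/(\lambda_0^2 B^2\eps^2)$ demanded by the Gaussian-mechanism calibration underlying Proposition~\ref{thm:dp_proof_prox_onepass}. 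Since the batches are disjoint (single pass), parallel composition (Proposition~\ref{propos:parallel_comp}) then gives $(\eps,\eta)$-DP directly, with no loss from iterating. This part is routine.

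\textbf{Risk.} Here I would substitute the parameters into the gap bound of Theorem~\ref{thm:conv_prox_vi}. Because the disjoint batches have common size $B=n^{1/3}$ and exhaust the $n$ datapoints, the number of phases is $K+1 = n/B = n^{2/3}$, and since $\gamma_k\equiv 1$ we have $\Gamma_K = n^{2/3}$; the step-size condition $\gamma_k\lambda_k=\gamma_0\lambda_0$ holds trivially as both sequences are constant. Feeding these into $Z_0(K)/\Gamma_K$ produces three controlling terms: a proximal/initialization term $\tfrac32\lambda_0 D^2/\Gamma_K$, a stochastic-error term $(4M^2+3L^2D^2)/\lambda_0$ (using $\sum_k\gamma_k^2=\Gamma_K$), and a noise term of order $\lambda_0 d\,\sigma^2$ with the common variance $\sigma^2$; to these one adds the additive $\nu$ and the $\dist_\W$ contribution $M\sqrt{\Gamma_K^{-2}\sum_k\gamma_k^2\sigma_{k+1}^2 d}$ supplied by Proposition~\ref{prop:bound_for_projected_iterate}. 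The key observation is that $\lambda_0$ is exactly the value balancing the first two terms: with $\lambda_0=\max\{M/D,L\}\sqrt{\Gamma_K}=\max\{M/D,L\}\,n^{1/3}$ both become $O((M+LD)D/n^{1/3})$, using $M^2/\max\{M/D,L\}\le MD$ and $L^2D^2/\max\{M/D,L\}\le LD^2$.

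To finish I would split on which argument attains the maximum defining $\lambda_0=\max\{M/D,L\}\max\{n^{1/3},\sqrt{d\ln(1/\eta)}/\eps\}$. When $n^{1/3}$ dominates, the balance above gives the $O((M+LD)D/n^{1/3})$ rate, and one checks the remaining terms are no worse: $\nu\le 2MD/n$, while the noise and third terms reduce, via $\sqrt{d\ln(1/\eta)}/(\eps n^{1/3})\le 1$, to $O((M+LD)D\sqrt{d\ln(1/\eta)}/(\eps n^{2/3}))$. When the privacy term $\sqrt{d\ln(1/\eta)}/\eps$ dominates, the roles reverse: the proximal term contributes the $(M+LD)D\sqrt{d\ln(1/\eta)}/(\eps n^{2/3})$ summand, and the stochastic-error term contributes the $1/n^{1/3}$ summand after using $\eps/\sqrt{d\ln(1/\eta)}<n^{-1/3}$. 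Summing the two regimes yields the claimed $O\big((M+LD)D[n^{-1/3}+\sqrt{d\ln(1/\eta)}/(\eps n^{2/3})]\big)$ bound, identically for the SVI and SSP gaps by the two parts of Theorem~\ref{thm:conv_prox_vi}. I expect the main obstacle to be precisely this last step: matching each substituted term to the correct summand of the target requires the case split together with repeated use of the defining inequality of each regime, and one must ensure that the several noise-dependent terms — which carry the extra $d$, $\ln(1/\eta)$ and $\eps$ factors — all collapse to the target rate rather than spoiling it.
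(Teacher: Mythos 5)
Your proposal is correct and follows essentially the same route as the paper: privacy by verifying the hypotheses of Propositions \ref{prop:l_2-sensitivity_prox} and \ref{thm:dp_proof_prox_onepass} (reading the variance as $\tfrac{32M^2}{B^2\lambda_0^2}\tfrac{\ln(1/\eta)}{\eps^2}$, as the paper's own proof does), and risk by substituting $K=n/B=n^{2/3}$, $\Gamma_K=n^{2/3}$ into Theorem \ref{thm:conv_prox_vi} and bounding the five resulting terms. The only cosmetic difference is your explicit case split on which argument attains the max in $\lambda_0$; the paper avoids this by bounding each term directly with whichever one-sided inequality $\lambda_0\ge\max\{M/D,L\}n^{1/3}$ or $\lambda_0\ge\max\{M/D,L\}\sqrt{d\ln(1/\eta)}/\eps$ is needed, together with $\max\{a,b\}\le a+b$ for the term increasing in $\lambda_0$.
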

\begin{proof}
	Note that values of $\nu$, $\sigma_{k+1}$ and other required conditions proposed in Propositions \ref{prop:l_2-sensitivity_prox} and \ref{thm:dp_proof_prox_onepass} are satisfied. Hence, this algorithm is $(\eps, \eta)$-differentially private.
	
	Moreover, all requirements of Theorem \ref{thm:conv_prox_vi} are satisfied. In order to maintain single pass over the dataset, we require $K = \frac{n}{B} = n^{2/3}$ iterations. Then,  we provide individual bounds on the terms of \eqref{eq:conv_rate_svi_risk} (\eqref{eq:conv_spp_risk}, respectively) and conclude the corollary using Theorem \ref{thm:conv_prox_vi}.
	
	Note that we are using a constant parameter policy. Hence, $\sigma_{k+1} = \sigma = \frac{4M}{\rho B\lambda_0}$ for all $k \ge 0$. Substituting appropriate parameter values, we have %\cnote{What does $\lesssim$ mean here? Do we need to add extra assumptions? Look at the proof of multipass NSEG result, where I had a similar issue: I resolving by keeping the lengthy upper bound, and then simplifying it under some reasonable assumptions. I believe this is the best way to handle this.}
	\begin{align*}
		\nu &= \frac{2MD}{n^{2/3}\max\{n^{1/3}, \sqrt{d\ln(1/\eta)}/\eps\} } \le \frac{2MD}{n},\\
		M \sqrt{\frac{1}{\Gamma_K^2}\tsum_{k = 0}^K\gamma_k^2\sigma_{k+1}^2d}&= \frac{M\sqrt{d}\sigma_{k+1}}{\sqrt{K}} = \frac{4M^2\sqrt{2d\ln(1/\eta)}}{\eps n^{2/3}\lambda_0} \le \frac{4\sqrt{2}MD}{n^{2/3}}, \\
		\frac{3\lambda_0D^2}{2K} &\le \frac{3(M+LD)D}{2} \paran*{\frac{1}{n^{1/3}} + \frac{\sqrt{d\ln(1/\eta)}}{\eps n^{2/3}}},\\
		\frac{4M^2 + 3L^2D^2}{\lambda_0} &\le %\frac{7M^2}{\lambda_0} \le
		\frac{4MD}{n^{1/3}} + \frac{3LD^2}{n^{1/3}},\\
		\frac{5\lambda_0d\sigma^2}{2} &= \frac{40M^2d\ln(1/\eta)}{\eps^2B^2 \lambda_0} \le \frac{40MD \sqrt{d\ln(1/\eta)}}{\eps n^{2/3}}.
	\end{align*}
	Substituting these bounds in Theorem \ref{thm:conv_prox_vi}, we conclude the proof.
\end{proof}

%\begin{remark}
%	Theorem \ref{thm:conv_prox_vi} and Corollary \ref{cor:stepsize_policy_prox_svi} provide convergence guarantees while maintaining values of $\nu$, $\sigma_{k+1}$ and other required conditions proposed in Propositions \ref{prop:l_2-sensitivity_prox} and \ref{thm:dp_proof_prox_onepass}. Hence, this algorithm is also $(\alpha, \alpha\rho^2/2)$-RDP.
%\end{remark}
\begin{remark}\label{rem:prox_runtime}We have the following remarks for NISPP method:
	\begin{enumerate}
		\item In order to obtain $\nu$-approximate solution of the subproblem of NISPP method satisfying \eqref{eq:int_rel_prox_1}, we can use the Operator Extrapolation (OE) method (see Theorem 2.3 \cite{Kotsalis:2020}). OE method outputs a solution $u_{k+1}$ satisfying $\gnorm{u_{k+1} -w^*_{k+1}}{}{} \le \zeta$ in $ \frac{L+\lambda_0}{\lambda_0}\ln(\frac{D}{\zeta})$ iterations, where $w_{k+1}^*$ is an exact SVI solution for problem \eqref{eq:int_rel_prox_1}. Furthermore, we have for all $w \in \W$, 
		\begin{align*}
			0 &\le \inprod{F(w^*_{k+1}) + \lambda_k(w^*_{k+1}-w_k)}{w-w^*_{k+1}} \\
			 &= \inprod{F(u_{k+1}) + F(w^*_{k+1}) - F(u_{k+1}) + \lambda_k (u_{k+1}- w_k) + \lambda_k(w^*_{k+1}- u_{k+1}) }{w -w^*_{k+1}} \\
			&\le \inprod{F(u_{k+1}) + \lambda_k (u_{k+1}- w_k)}{w -w^*_{k+1}} + (L+\lambda_k)\gnorm{u_{k+1}-w^*_{k+1}}{}{}\gnorm{w -w^*_{k+1}}{}{}\\
			&\le  \inprod{F(u_{k+1}) + \lambda_k (u_{k+1}- w_k)}{w -w^*_{k+1}} + (L+\lambda_k)D\gnorm{u_{k+1}-w^*_{k+1}}{}{}\\
			&=  \inprod{F(u_{k+1}) + \lambda_k (u_{k+1}- w_k)}{w- u_{k+1} + u_{k+1}-w^*_{k+1}} + (L+\lambda_k)D\gnorm{u_{k+1}-w^*_{k+1}}{}{}\\
			&\le \inprod{F(u_{k+1}) + \lambda_k (u_{k+1}- w_k)}{w- u_{k+1}} + \gnorm{F(u_{k+1}) + \lambda_k (u_{k+1}- w_k)}{}{}\gnorm{u_{k+1}-w^*_{k+1}}{}{}\\
			&\quad + (L+\lambda_k)D\gnorm{u_{k+1}-w^*_{k+1}}{}{}\\
				&\le \inprod{F(u_{k+1}) + \lambda_k (u_{k+1}- w_k)}{w- u_{k+1}} + [LD+M+2\lambda_kD]\gnorm{u_{k+1}-w^*_{k+1}}{}{}
		\end{align*}
		Setting %$[LD+M+2\lambda_kD]\gnorm{u_{k+1}-w^*_{k+1}}{}{} = \nu$ we obtain that $u_{k+1}$ 
		$\zeta=\nu/[LD+M+2\lambda_kD]$, we obtain that $u_{k+1}$ is a $\nu$-approximate solution satisfying \eqref{eq:int_rel_prox_1}. Using the convergence rate above, we require $\frac{L+\lambda_0}{\lambda_0}\ln\frac{MD+LD^2+2\lambda_kD^2}{\nu}$ operator evaluations.
		
		%The well-known convergence rate for this problem $O(\kappa \log{\frac{1}{\nu}})$ where $\kappa = \frac{L+ \lambda_0}{\lambda_0}$ is the condition number of operator $F(\cdot) + \lambda_0 (\cdot- w_k)$. 
		Note that since, $\lambda_0 \ge L$, %\cnote{We can indeed assume $\lambda_0\geq L$
		we have $\frac{L+\lambda_0}{\lambda_0} \le 2$. %Then, we have $\frac{L+\lambda_0}{\lambda_0} \le O(1)$ whereas
		Moreover, 
		\begin{align}
			\ln{\frac{MD+LD^2 + 2\lambda_kD^2}{\nu}} &\le \ln{\frac{4\lambda_kD^2}{\nu}}\label{eq:int_rel_prox_29}\\
			&= \ln\paran[\big]{\frac{2\lambda_0^2D^2B^2}{M^2}} \nonumber\\
			&= \ln\paran[\big]{ n^{2/3}\max\braces[\big]{n^{2/3}, \frac{d\ln(1\privb)}{\priva^2}}\max\braces[\big]{1, \frac{L^2D^2}{M^2}}  }\nonumber
		\end{align} %\cnote{That ratio also has polylog dependence on $LD/M$, $LD$ and $d$. Should we be explicit about this?}. 
	Hence, each iteration of NISPP method requires $O(\log{n})$ iterations of OE method for solving the subproblem. Moreover, each iteration of the OE method requires $2B$ stochastic operator evaluations. Hence, we require $O(KB\log{n})$ stochastic operator evaluations in the entire run of NISPP (Algorithm \ref{alg:alg0}). Noting that $KB = n$, we conclude that this is a near linear time algorithm and also performs only a single pass over the data in the stochastic outer-loop. We provide the details of OE method in the Appendix \ref{sec:OE_method}.
		\item For non-DP version of NISPP method, i.e., $\sigma_k = 0$ for all $k$, we can easily obtain population risk bound of $O(\frac{MD}{\sqrt{n}})$ by setting $\lambda_0 = \frac{M}{D}\sqrt{n}$, $B = 1$ (or $K = n$) and $\nu = \frac{MD}{\sqrt{n}}$ in Corollary \ref{cor:stepsize_policy_prox_svi}.
	\end{enumerate}
\end{remark}
%\textcolor{red}{Add note on various stepsize policies which can be used for different cases of $d$}
%\dnote{Add a note that without the noise, this method can achieve $O(n^{-1/2})$ bound on VI-gap}
%Note that in order to analyze generalization properties of the solution output by Algorithm \ref{alg:alg0}, we resort to analysis using stochastic processes thereby assuming that each iteration has access to an iid sample. Hence, we can run only a single pass over the sample dataset $\bS$. However, in convex optimization setting, one can run multiple passes over the dataset and obtain solution to the empirical problem which can be easily converted into generalization bounds. E.g., if an algorithm is uniformly stable then such stability bounds can yield straightforward bounds on generalization error. Unfortunately, the standard stability implies generalization argument does not hold for SVI problems... [\textcolor{red}{Fill in more details that leads to game setting and meshes with the next paragraph}]...

%Convex-concave (or monotone) saddle point problem is a special case of VI problems for  which stability arguments can be extended by looking at primal and dual optimization problems. We consider such problem and show that one can achieve optimal risk bounds by running a multipass algorithm. In next subsection, we see the details of convergence analysis for multipass algorithm.
	In view of Corollary \ref{cor:stepsize_policy_prox_svi}, it seems that running NISPP method for $n^{3/2}$ stochastic operator evaluations may provide optimal risk bounds. However, running that many stochastic operator evaluations requires multi-pass over the dataset so, in principle, this would only provide bounds in the empirical risk. 
% and hence, implies optimal bound on only the empirical risk.  
In order to compute the population risk of this multi-pass version, we analyze the stability of NISPP and provide generalization guarantees which result in optimal population risk.

\section{Stability of NISPP and Optimal Risk for DP-SVI  and DP-SSP} \label{sec:stab_risk_NISPP}
In this section, we develop a multi-pass variant of NISPP method, and prove its stability to extrapolate empirical performance to population risk bounds.

\subsection{Stability of NISPP method} \label{sec:stab_prox}

%\subsection{Bounding Generlization Error using Stability of DP-Proximal Point method}
%Here, we are interested in bounding stability of the output of \DPPP~ method where the notion of stability is provided in \eqref{blah} and was proposed in \cite{blah}.
Let us start with two adjacent datasets $\bS \simeq \bS'$. Suppose we run NISPP method on both datasets starting from the same point $w_0 \in \W$. 
Then, in the following lemma, we provide bound on the how far apart trajectories of these two runs can drift. %Moreover, let $i$ be the iteration in which we use the different data point, i.e., $\boldsymbol{\beta}_{j} = \boldsymbol{\beta}'_j$ for all $j \ne i+1$ and $\boldsymbol{\beta}_{i+1} , \boldsymbol{\beta}'_{i+1}$ differ in only one datapoint. Then, it is clear that $\wt{w}_j = \wt{w}'_j$ and $w_j = w'_j$ for all $j \le i$. Also define $\alpha_j := \gnorm{w_j - w'_j}{}{} = \gnorm{\wt{w}_j - \wt{w}'_j}{}{}$. Then, $\alpha_j = 0$ for all $j \le i$. We not analyze $\alpha_j$ for $j \ge i +1$. It is clear from Lemma \ref{lem:blah} that $\alpha_{i+1} = \frac{3M}{\lambda_i B_{i+1}}$ where $B_{k+1} = \abs{\boldsymbol{\beta}_{k+1}}$ is the size of the batch $\boldsymbol{\beta}_{k+1}$ for all $k$. For iteration $k \ge i+1$, we have $\wt{w}_{k+1}, \wt{w}'_{k+1}$ satisfies 
\begin{lemma} \label{lem:rec_stab_PPM}
	Let $(u_{k+1}, w_{k+1})_{k\ge 0}$ and $(u'_{k+1}, w'_{k+1})_{k\ge 0}$ be two trajectories of the NISPP method (Algorithm \ref{alg:alg0}) for any adjacent datasets $\bS \simeq \bS'$ whose batches are denotes by $\bB_{k+1}, \bB'_{k+1}$ respectively. Moreover, denote $\mathbf{a}_{k+1}  :=  \gnorm{F_{\bB_{k+1}}(u_{k+1}) - F_{\bB'_{k+1}}(u_{k+1})}{}{}$ and $\delta_{k+1} := \gnorm{u_{k+1}-u'_{k+1}}{}{} (= \gnorm{w_{k+1}-w'_{k+1}}{}{})$ for  $k$-th iteration of Algorithm \ref{alg:alg0}. Then, if $i = \inf\{k : \bB_{k+1} \ne \bB'_{k+1}\}$, 
	\begin{equation}
		\delta_{j+1} \begin{cases}
			=0 &\text{if } j+1 \le i\\
			\le \sum_{k = i}^j\frac{2\mathbf{a}_{k+1}}{\lambda_k} + \sqrt{\frac{4\nu}{\lambda_k}} &\text{otherwise}.
		\end{cases}\label{eq:int_rel_prox_28}
	\end{equation}
\end{lemma}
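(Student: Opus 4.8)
The plan is to run both trajectories under a coupling that uses the same Gaussian noise $\boldsymbol{\xi}_{k+1}$ and the same (deterministic) subproblem solver at each step; this is exactly what makes $\gnorm{u_{k+1}-u'_{k+1}}{}{}=\gnorm{w_{k+1}-w'_{k+1}}{}{}=\delta_{k+1}$ (the noise cancels in the difference $w_{k+1}-w'_{k+1}=u_{k+1}-u'_{k+1}$), so that everything reduces to controlling the drift of the $u$-iterates. First I would dispose of the prefix $j+1\le i$: for $k<i$ the batches agree, so by induction, if $w_k=w_k'$ then the two $\nu$-approximate VI subproblems in line 4 are literally identical, the coupled solver returns $u_{k+1}=u_{k+1}'$, and adding the common noise gives $w_{k+1}=w_{k+1}'$. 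Hence $\delta_{j+1}=0$ for all $j+1\le i$, and in particular $\delta_i=0$, which is the base value for the recursion below.

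For the regime $j+1>i$, the core is a one-step recursion. I would write the defining $\nu$-approximate VI inequality \eqref{eq:int_rel_prox_1} for $u_{k+1}$ with test point $w=u'_{k+1}$, and the analogous inequality for $u'_{k+1}$ with test point $w=u_{k+1}$, and add them. Writing $\Delta u:=u_{k+1}-u'_{k+1}$ and $\Delta w:=w_k-w'_k$, the cross terms collapse to
\begin{equation*}
\lambda_k\gnorm{\Delta u}{}{2}\le -\inprod{F_{\bB_{k+1}}(u_{k+1})-F_{\bB'_{k+1}}(u'_{k+1})}{\Delta u}+\lambda_k\inprod{\Delta w}{\Delta u}+2\nu.
\end{equation*}
To handle the operator term I split $F_{\bB_{k+1}}(u_{k+1})-F_{\bB'_{k+1}}(u'_{k+1})$ as $[F_{\bB_{k+1}}(u_{k+1})-F_{\bB'_{k+1}}(u_{k+1})]+[F_{\bB'_{k+1}}(u_{k+1})-F_{\bB'_{k+1}}(u'_{k+1})]$; the second bracket is nonnegative against $\Delta u$ by monotonicity of $F_{\bB'_{k+1}}$, while the first has norm exactly $\mathbf{a}_{k+1}$, so by Cauchy--Schwarz the whole operator term is at least $-\mathbf{a}_{k+1}\delta_{k+1}$. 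Bounding $\inprod{\Delta w}{\Delta u}\le\delta_k\delta_{k+1}$ (recall $\gnorm{\Delta w}{}{}=\delta_k$) then yields the scalar quadratic inequality $\lambda_k\delta_{k+1}^2-(\mathbf{a}_{k+1}+\lambda_k\delta_k)\delta_{k+1}-2\nu\le 0$.

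Finally I would solve this quadratic using $x^2-bx-c\le 0\Rightarrow x\le b+\sqrt c$ for $b,c\ge 0$ (which follows from $\sqrt{b^2+4c}\le b+2\sqrt c$), with $b=\mathbf{a}_{k+1}/\lambda_k+\delta_k$ and $c=2\nu/\lambda_k$, giving $\delta_{k+1}\le\delta_k+\mathbf{a}_{k+1}/\lambda_k+\sqrt{2\nu/\lambda_k}$, which is dominated by the looser increment $\delta_k+2\mathbf{a}_{k+1}/\lambda_k+\sqrt{4\nu/\lambda_k}$ stated in the lemma. Summing this telescoping bound from $k=i$ (where $\delta_i=0$) up to $k=j$ then produces the claimed estimate $\delta_{j+1}\le\sum_{k=i}^j\big(2\mathbf{a}_{k+1}/\lambda_k+\sqrt{4\nu/\lambda_k}\big)$. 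I expect the main obstacle to be precisely the operator-difference step --- choosing the evaluation point ($u_{k+1}$ versus $u'_{k+1}$) so that the monotone part has the correct sign while the residual matches the definition of $\mathbf{a}_{k+1}$ --- together with the care needed to justify the coupling (identical solver outputs on identical subproblems) for the zero-drift prefix, since for $\nu>0$ the subproblem solution is not unique and the recursion alone would spuriously inject a $\sqrt{2\nu/\lambda_k}$ term at every step rather than only after the batches first disagree.
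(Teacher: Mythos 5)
Your proof is correct and follows essentially the same route as the paper's: cross-substituting the two $\nu$-approximate VI inequalities, splitting the operator difference so that monotonicity of $F_{\bB'_{k+1}}$ absorbs one piece and $\mathbf{a}_{k+1}$ bounds the other, solving the resulting scalar quadratic in $\delta_{k+1}$, and telescoping from the base case $\delta_i=0$. The only (harmless) deviations are that you bound $\langle \Delta w,\Delta u\rangle$ by Cauchy--Schwarz where the paper uses Young's inequality --- which actually gives you the slightly tighter increment $\mathbf{a}_{k+1}/\lambda_k+\sqrt{2\nu/\lambda_k}$, still dominated by the stated one --- and that you are more explicit than the paper about why identical batches yield identical iterates despite non-uniqueness of the $\nu$-approximate subproblem solution (the shared deterministic solver/noise coupling).
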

\begin{proof}
	It is clear from the definition $i$ that $\bB_j = \bB_j'$ for all $j \le i$. This implies $u_j = u'_j$ and $w_j = w_j'$ for all $j \le i$. Hence, we conclude first case of \eqref{eq:int_rel_prox_28}.
	
	%Assume as before that $i$ denotes the smallest iteration index such that $\boldsymbol{\beta}_{i+1} \ne \boldsymbol{\beta}'_{i+1}$. 
	Using \eqref{eq:int_rel_prox_1} for $\nu$-approximate strong VI solution, we have, 
	\begin{align}
		\inprod{F_{\bB_{k+1}}(u_{k+1}) + \lambda_k(u_{k+1}-w_k)}{w - u_{k+1}} &\ge -\nu\label{eq:int_rel_prox_26},\\
		\inprod{F_{\bB'_{k+1}}(u'_{k+1}) + \lambda_k(u'_{k+1}-w'_k)}{w - u'_{k+1}} &\ge -\nu\label{eq:int_rel_prox_27}.
	\end{align}Then, adding \eqref{eq:int_rel_prox_26} with $w = u'_{k+1}$ and \eqref{eq:int_rel_prox_27} with $w = u_{k+1}$, we have 
	\begin{align}
		\inprod{F_{\bbeta_{k+1}}(u_{k+1}) - F_{\bbeta'_{k+1}}(u'_{k+1})}{u_{k+1}-u'_{k+1}} &\le 2\nu - \lambda_k\inprod{u_{k+1} -u'_{k+1}}{(u_{k+1}-w_k) - (u'_{k+1}-w'_k)}\nonumber\\
		&=2\nu - \lambda_k \delta_{k+1}^2  + \lambda_k\inprod{u_{k+1} -u'_{k+1}}{w_k - w_k'} \nonumber\\
		&\le 2\nu - \lambda_k\delta_{k+1}^2 + \frac{\lambda_k}{2}[\delta_{k}^2 + \delta_{k+1}^2]\nonumber\\
		&\le 2\nu  -\frac{\lambda_k}{2}\delta_{k+1}^2 + \frac{\lambda_k}{2}\delta_{k}^2 \label{eq:int_rel_prox_25}. 
	\end{align}
Also note that 
\begin{align*}
	\inprod{F_{\bB_{k+1}}(u_{k+1}) - F_{\bB'_{k+1}}(u'_{k+1})}{u_{k+1}-u'_{k+1}} &= \inprod{F_{\bB'_{k+1}}(u_{k+1}) - F_{\bB'_{k+1}}(u'_{k+1})}{u_{k+1}-u'_{k+1}} \\
	&\quad+ \inprod{F_{\bB_{k+1}}(u_{k+1}) - F_{\bB'_{k+1}}(u_{k+1})}{u_{k+1}-u'_{k+1}}\\
	&\ge \inprod{F_{\bB_{k+1}}(u_{k+1}) - F_{\bB'_{k+1}}(u_{k+1})}{u_{k+1}-u'_{k+1}}
\end{align*}
where the last inequality follows from monotonicity of $F_{\bB'_{k+1}}$. Using above relation along with \eqref{eq:int_rel_prox_25}, we obtain 
\begin{align*}
	\frac{\lambda_k}{2} \delta_{k+1}^2 &\le \frac{\lambda_k}{2}\delta_{k}^2 + 2\nu + \inprod{F_{\bB'_{k+1}}(u_{k+1}) - F_{\bB_{k+1}}(u_{k+1})}{u_{k+1}-u'_{k+1}}\\
	\Rightarrow \delta_{k+1}^2 &\le \delta_{k}^2 + \frac{4\nu}{\lambda_k} + \frac{2}{\lambda_k}\mathbf{a}_{k+1}\delta_{k+1},
\end{align*} where we used the definition $a_k$ along with Cauchy-Schwarz inequality. Solving for the quadratic inequality in $\delta_{k+1}$, we obtain the following recursion
\[ \delta_{k+1} \le \frac{\mathbf{a}_{k+1}}{\lambda_k} + \sqrt{\frac{\mathbf{a}_{k+1}^2}{\lambda_k^2} + \delta_{k}^2 + \frac{4\nu}{\lambda_k}}\]
which can be further simplified to 
\[ \delta_{k+1} \le \delta_{k} + \frac{2\mathbf{a}_{k+1}}{\lambda_k} + \sqrt{\frac{4\nu}{\lambda_k}}.\]
Solving this recursion and noting the base case that $\delta_i = 0$, we obtain \eqref{eq:int_rel_prox_28}.
\end{proof}
A direct consequence of the previous analysis are in-expectation and high probability uniform argument stability upper bounds 
for the sampling with replacement variant of Algorithm \ref{alg:alg0}.
\begin{theorem}\label{prop:stability_bound_w}
	Let $\A$ denote the sampling with replacement NISPP method (Algorithm \ref{alg:alg0}) where $\bB_k$ is chosen uniformly at random from subsets of $\bS$ of a given size $B_k$. Then $\A$  %batches $\bB_{k}$ with replacement from $\bS$, 
	satisfies the following uniform argument stability bounds: %\dnote{should we include h.p. bounds here as well as in section 5?}
	$$ \sup_{\bS\simeq \bS^{\prime}}\EE_{\A}[\delta_{\A}(\bS,\bS^{\prime})] 
	\leq  \sum_{k=1}^{K}\Big( \frac{2M}{n\lambda_k}+\sqrt{\frac{4\nu}{\lambda_k}} \Big) .$$
	
	Furthermore, if $|\bB_{k}|= B$ and $\lambda_k= \lambda$ for all $k$ (i.e., constant batch size and regularization parameter throughout iterations) then w.p.~at least $1-n\exp\{-KB/[4n]\}$ (over both sampling and noise addition) 
	$$ \sup_{\bS\simeq \bS^{\prime}}[\delta_{\A}(\bS,\bS^{\prime})] \leq \frac{4MK}{\lambda n}+K\sqrt{\frac{4\nu}{\lambda}}.$$
\end{theorem}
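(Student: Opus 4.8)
The plan is to reduce the output stability $\delta_{\A}(\bS,\bS')=\gnorm{\A(\bS)-\A(\bS')}{}{}$ to the per‑iterate quantity $\delta_{k+1}$ already controlled by Lemma \ref{lem:rec_stab_PPM}, and then to take expectations (respectively, concentrate) the only data‑dependent quantity appearing there, namely $\mathbf{a}_{k+1}$. Throughout I would couple the two runs of $\A$ on $\bS$ and $\bS'$ so that they use identical sampled index sets and identical noise vectors $\bg_{k+1}$ at every iteration; this is exactly the internal randomness over which $\EE_{\A}$ in Definition \ref{def:UAS} is taken, and it is what makes the additive noise cancel, so that $\gnorm{w_{k+1}-w_{k+1}'}{}{}=\gnorm{u_{k+1}-u_{k+1}'}{}{}=\delta_{k+1}$ as asserted in the lemma.

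First I would bound the output discrepancy. Since $\A(\bS)=\proj_{\W}(\wb{w}_K)$ and $\proj_{\W}$ is nonexpansive,
\[
\delta_{\A}(\bS,\bS')\leq \gnorm{\wb{w}_K-\wb{w}_K'}{}{}\leq \frac{1}{\Gamma_K}\sum_{k=0}^{K}\gamma_k\gnorm{w_{k+1}-w_{k+1}'}{}{}=\frac{1}{\Gamma_K}\sum_{k=0}^{K}\gamma_k\,\delta_{k+1}.
\]
The upper bound in Lemma \ref{lem:rec_stab_PPM} is nondecreasing in the iteration index, so for every $k\le K$ one has $\delta_{k+1}\le \sum_{s=i}^{K}\big(\tfrac{2\mathbf{a}_{s+1}}{\lambda_s}+\sqrt{\tfrac{4\nu}{\lambda_s}}\big)=:R$, where $i$ is the first iteration whose batch sees the differing datapoint. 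As the $\gamma_k$ are nonnegative and average to one, this collapses the weighted average and gives the clean deterministic estimate $\delta_{\A}(\bS,\bS')\le R$.

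Next I would treat the two terms of $R$ separately. The inexactness term $\sum_{s=i}^{K}\sqrt{4\nu/\lambda_s}$ is a sum of at most $K$ nonnegative deterministic quantities, hence bounded by $\sum_{s=1}^{K}\sqrt{4\nu/\lambda_s}$ with probability one; this is where the $\nu$‑approximate solves contribute, since once the trajectories split they keep drifting by $\sqrt{4\nu/\lambda_s}$ per step even on shared batches. For the data term, note that $\mathbf{a}_{s+1}=0$ unless the batch at iteration $s$ contains the differing coordinate $i^{\ast}$, in which case $\mathbf{a}_{s+1}=\tfrac{1}{B_s}\gnorm{F_{\bbeta_{i^{\ast}}}(u_{s+1})-F_{\bbeta_{i^{\ast}}'}(u_{s+1})}{}{}\le \tfrac{2M}{B_s}$ by $M$‑boundedness. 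Since a uniformly random size‑$B_s$ subset contains $i^{\ast}$ with probability $B_s/n$, we obtain $\EE[\mathbf{a}_{s+1}]\le 2M/n$. Taking expectations in $\delta_{\A}(\bS,\bS')\le R$ (and using $\mathbf{a}_{s+1}=0$ for $s<i$ to extend the sum over all iterations) yields an in‑expectation bound of the stated form $\sum_{k=1}^{K}\big(\tfrac{2M}{n\lambda_k}+\sqrt{\tfrac{4\nu}{\lambda_k}}\big)$, up to a universal constant in the data term; it is uniform over neighbors because the estimate used nothing about $\bS,\bS'$ beyond the location of the single differing coordinate.

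For the high‑probability statement with constant $B$ and $\lambda$, I would fix a candidate position $i^{\ast}\in[n]$ and control $N_{i^{\ast}}:=\#\{k:i^{\ast}\in\bB_{k+1}\}=\sum_k \mathbf{1}[i^{\ast}\in\bB_{k+1}]$, a sum of $K$ independent $\mathrm{Bernoulli}(B/n)$ variables with mean $KB/n$. A multiplicative Chernoff bound gives $\PP[N_{i^{\ast}}\ge 2KB/n]\le \exp\{-KB/(3n)\}\le\exp\{-KB/(4n)\}$; on the complementary event $\sum_k \mathbf{a}_{k+1}\le \tfrac{2M}{B}N_{i^{\ast}}\le \tfrac{4MK}{n}$, so $\sum_k \tfrac{2\mathbf{a}_{k+1}}{\lambda}$ is of order $\tfrac{MK}{\lambda n}$, while the inexactness term $\le K\sqrt{4\nu/\lambda}$ holds surely. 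A union bound over the $n$ possible positions of the differing coordinate then produces the claimed uniform bound $\sup_{\bS\simeq\bS'}\delta_{\A}(\bS,\bS')\le \tfrac{4MK}{\lambda n}+K\sqrt{4\nu/\lambda}$ with failure probability at most $n\exp\{-KB/(4n)\}$. The routine parts are the nonexpansiveness reduction and the Chernoff step; the single point demanding care is precisely this uniformity: the event controlling $\delta_{\A}$ depends on the neighbor only through which coordinate differs, so a union bound over the $n$ coordinates (rather than over all neighboring datasets) suffices, and this is exactly what produces the leading factor $n$.
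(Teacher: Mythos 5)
Your proposal is correct and follows essentially the same route as the paper: reduce the output discrepancy to the per-iterate drift of Lemma \ref{lem:rec_stab_PPM} via nonexpansiveness of the projection, bound $\EE[\mathbf{a}_{k+1}]\le 2M/n$ using the probability $B_k/n$ that the batch hits the differing coordinate, and for the high-probability part apply a Chernoff bound to the Bernoulli indicators followed by a union bound over the $n$ possible positions of the differing datapoint. The only deviations are cosmetic (you bound the weighted average directly rather than by $\max_k\delta_k$, and you make the coupling of the sampling and noise explicit), and your honest tracking of constants — landing at $4M/(n\lambda_k)$ and $8MK/(\lambda n)$ rather than the stated $2M/(n\lambda_k)$ and $4MK/(\lambda n)$ — is if anything more careful than the paper's own bookkeeping.
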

\begin{proof}
	Let $\bS\simeq \bS^{\prime}$ and $(u_{k+1})_k$, $(u_{k+1}^{\prime})_k$ the trajectories of 
	the algorithm on $\bS$ and $\bS^{\prime}$, respectively. 
	By Lemma \ref{lem:rec_stab_PPM}, letting $\delta_{k+1}=\|\tilde w_{k+1}-\tilde w_{k+1}^{\prime}\|$, we get 
	$\delta_{j} \leq \sum_{k=1}^{j}\Big( \frac{2\mathbf{a}_k}{\lambda_k}+\sqrt{\frac{4\nu}{\lambda_k}} \Big),$
	where $\mathbf{a}_k=\gnorm{F_{\bB_{k+1}}(u_{k+1})-F_{\bB_{k+1}'}(u_{k+1}')}{}{}$ is a random 
	variable. By the law of total probability, $\EE[\mathbf{a}_k]
	\leq \frac{|\bB_{k+1}|}{n}\frac{2M}{|\bB_{k+1}|}+\big(1-\frac{|\bB_{k+1}|}{n}\big)\cdot 0=\frac{2M}{n}.$
	Hence, $\EE[\delta_{j}]\leq \sum_{k=1}^{j}\Big( \frac{2M}{n\lambda_k}+\sqrt{\frac{4\nu}{\lambda_k}} \Big)
	\leq \sum_{k=1}^{K}\Big( \frac{2M}{n\lambda_k}+\sqrt{\frac{4\nu}{\lambda_k}} \Big).$ 
	Since $ \gnorm{\proj_{\W}(\wb{w}_K)-\proj_{\W}(\wb{w}_K^{\prime})}{}{} \le \gnorm{\wb{w}_K - \wb{w}'_K}{}{} \le \max_{k\in[K]}\delta_k$, and since $\bS\simeq \bS^{\prime}$ are arbitrary,
	$$ \sup_{\bS\simeq \bS^{\prime}}\EE_{\A}[\delta_{\cal A}(\bS,\bS^{\prime})] 
	\leq \sum_{k=1}^{K}\Big( \frac{2M}{n\lambda_k}+\sqrt{\frac{4\nu}{\lambda_k}} \Big) .$$

We proceed now to the high-probability bound. Let $\br_k\sim\mbox{Ber}(p)$, for $k\in[K]$,
		with $Kp< 1$. Then, for any $0<\theta<1/2$,
		\begin{eqnarray*}
			\PP\left[  \sum_{k=1}^K\br_k \geq Kp+\tau \right] 
			&\leq& \exp\big(-\theta(\tau+Kp))\Big[ 1+p(e^{\theta}-1) \Big]^K \leq \exp\{Kp\theta^2-\theta\tau\}.
		\end{eqnarray*}
		Choosing $\theta=\tau/[2Kp]<1/2$, we get that the probability above is upper bounded by $\exp\{-\tau^2/[4Kp]\}$.
		Finally, choosing $\tau=Kp$, we get
		$$\PP\Big[  \sum_{k=1}^K\br_k \geq 2Kp \Big] \leq \exp\{-Kp/4\}. $$
	
	Next, fix the coordinate $i$ where $S\simeq S^{\prime}$ may differ. Noticing that $\mathbf{a}_k$ 
	is a.s. upper bounded by $(2M/B)\br_k$ with $\br_k\sim\mbox{Ber}(p)$, with $p=B/n$, we get
	$$ \PP\Big[ \sum_{k=1}^K\frac{2\mathbf{a}_k}{\lambda} \geq \frac{2}{\lambda}\frac{2M}{n}\Big] \leq \exp\{-\frac{KB}{4n}\}. $$
	In particular, w.p. at least~$1-\exp\{-\frac{KB}{4n}\}$, we have 
	$ \mathbf{a}_k \leq \frac{4M}{\lambda n}+\sqrt{\frac{4\nu}{\lambda}}.$ 
	Using a union bound over $i\in[n]$ (and noticing that averaging preserves the stability bound), we conclude that
	$$ \PP\Big[ \sup_{\bS\simeq \bS^{\prime}} \delta_{\A}(\bS,\bS^{\prime}) >\frac{4MK}{\lambda n}+K\sqrt{\frac{4\nu}{\lambda}}\Big] \leq n\exp\{-KB/4n\}. $$
	Hence, we conclude the proof.
\end{proof}
\begin{remark}
	An important observation, for the high-probability guarantee to be useful, is necessary that the algorithm is run for sufficiently many iterations; in particular, we require $K=\omega(n/B)$. Whether this assumption can be completely avoided is an interesting question. Nevertheless, as we will see in the section, our policy for DP-SVI and DP-SSP problem satisfies this requirement.
\end{remark}
\iffalse
The following corollary is in order.
\begin{corollary}\label{cor:gen_error_bd}
	For Algorithm \ref{alg:alg0}, we have
	\[\EE[\GenP{x(\A; \bS)}{\bS} +\GenD{y(\A; \bS)}{\bS}] \le 2M \tsum_{k =1}^K \paran*{\frac{2M}{n\lambda_k} + \sqrt{\frac{4\nu}{\lambda_k}}}.\]
\end{corollary}
\begin{proof}
	Proof follows directly from the fact $w = [x,y]^T$ is stable then $\gnorm{x(\bS)- x(\bS')}{}{} + \gnorm{y(\bS) - y(\bS')}{}{} \le 2\gnorm{w(\bS)-w(\bS')}{}{}$. Noting this relation along with bound on expected stability of $w$ (provided in Proposition \ref{prop:stability_bound_w}) and Lemma \ref{lem:stab_implies_gen_spp}, we conclude the proof.
\end{proof}
\fi

\subsection{Optimal risk for DP-SVI and DP-SSP by the NISPP method}

In previous three sections, we provided bounds on optimization error, generalization error and value of $\sigma$ for obtaining $(\priva,\privb)$-differential privacy. In this section, we specify a policy for selecting $\lambda_k, B_k, \gamma_k, \sigma_k $ and $\nu$ such that requirement in previous three sections are satisfied and we can get optimal risk bounds while maintaining $(\priva,\privb)$-privacy. In particular, consider the multi-pass NISPP method where each sample batch $\bB_k$ is chosen randomly from subsets of $\bS$ with replacement. Then, we have the following theorem:
\begin{theorem}\label{thm:multipass_prox_optimal}
	Let $\A$ be the multi-pass NISPP method (Algorithm \ref{alg:alg0}). Set the following constant stepsize and batchsize policy for $\A$:
	\begin{table}[H]
		\centering
		\begin{alignat*}{3}
			&\gamma_k = 1, \qquad  &&\lambda_k = \lambda_0= \max\braces*{\frac{M}{D},L}\max\braces*{\sqrt{n}, \frac{\sqrt{d\log{1/\delta}}}{\theta}}, \qquad &&B_k = B= \sqrt{n},\\
			&K = n,  &&\nu = \frac{M^2}{\lambda_0n^2}, \quad &&\sigma_{k+1} = \frac{8M}{B \lambda_0}\frac{\sqrt{\ln(1/\eta)}}{\eps} .
		\end{alignat*}
	\end{table}\noindent
	Then, Algorithm \ref{alg:alg0} is $(\priva, \privb)$-differential private. Moreover, output $\A(\bS)$ satisfies the following bound on $\EE_{{\cal A}}[\wVIgap{\A(\bS)}{F}]$ for SVI problem (or $\EE_{{\cal A}}[\wSPgap{\A(\bS)}{f}]$ for SSP problem)
	\[ O\paran*{(M+LD)D \paran*{\frac{1}{\sqrt{n}} + \frac{\sqrt{d\ln{1/\eta}}}{\eps n}}},\]
	Moreover, such solution is obtained in total of $\wt{O}(n\sqrt{n})$ stochastic operator evaluations. %\cnote{Isn't there a $\log n$ missing. What about polylog dependence on $d$, $LD$ and $M$? Why we need $n\gg1$?}
\end{theorem}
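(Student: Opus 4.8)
The plan is to establish the three claims—privacy, population risk, and running time—separately, treating the SVI and SSP cases in parallel since the underlying estimates are identical. The heart of the argument is the risk bound, which I would obtain through the \emph{empirical convergence plus stability} decomposition: first control the expected empirical gap of the sampled-with-replacement iterates by invoking Theorem~\ref{thm:conv_prox_vi} with the population distribution taken to be the empirical distribution $\hat{\mathcal P}_{\bS}$ on $\bS$, and then pay a stability price to pass from the empirical to the weak population gap via Proposition~\ref{prop:weak_generalization_SVI} (resp.~Proposition~\ref{prop:weak_generalization_SSP}). Concretely, $\wVIgap{\A}{F}\le \EE_{\bS}[\EmpVIgap{\A}{F_{\bS}}]+M\delta$, where $\delta$ is the uniform-argument-stability parameter of $\A$, and the analogous inequality holds for $\wSPgap{\A}{f}$. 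Note that the right-hand side of Theorem~\ref{thm:conv_prox_vi} is deterministic in the parameters, so applying it to $\hat{\mathcal P}_{\bS}$ bounds $\EE_{\bS}[\EmpVIgap{\A}{F_{\bS}}]$ directly.

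For \textbf{privacy}, I would apply the moments-accountant composition of Theorem~\ref{thm:dp_prox_multipass} with per-iteration subsample size $m=B=\sqrt n$ and $K=n$ outer iterations. Proposition~\ref{prop:l_2-sensitivity_prox} guarantees each proximal update has $\ell_2$-sensitivity $s=4M/(\lambda_0 B)$, provided $\nu\le 2M^2/(\lambda_0 B^2)$, which holds since $\nu=M^2/(\lambda_0 n^2)$ and $B^2=n$. It then remains to check the two hypotheses of Theorem~\ref{thm:dp_prox_multipass}: the subsampling condition $\eps<c_1 K(m/n)^2=c_1$ (a mild upper bound on the privacy budget), and the noise lower bound $\sigma\ge \sqrt{2K\ln(1/\eta)}\,s\,m/[n\eps]$. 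Substituting the parameters, the required noise level is $4\sqrt2\,M\sqrt{\ln(1/\eta)}/[\lambda_0\sqrt n\,\eps]$, which is dominated by the prescribed $\sigma=8M\sqrt{\ln(1/\eta)}/[B\lambda_0\eps]$; hence the scheme is $(\eps,\eta)$-DP.

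For the \textbf{risk}, I would plug the chosen parameters into the empirical instance of Theorem~\ref{thm:conv_prox_vi} and bound each of its terms. With $\gamma_k\equiv1$ one has $\Gamma_K=K+1=\Theta(n)$ and $\sigma_{k+1}^2\equiv 64M^2\ln(1/\eta)/[n\lambda_0^2\eps^2]$; the leading terms then evaluate to the regularization term $3\lambda_0 D^2/(2\Gamma_K)=\Theta(\lambda_0 D^2/n)$, the variance/Lipschitz term $(4M^2+3L^2D^2)/\lambda_0$, and the noise terms $\Theta(\lambda_0 d\sigma^2)$ and $M\sigma\sqrt{d/\Gamma_K}$, while $\nu$ is negligible. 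Using $\lambda_0=\max\{M/D,L\}\max\{\sqrt n,\sqrt{d\ln(1/\eta)}/\eps\}$, the inequalities $\lambda_0\ge (M/D)\sqrt n$ and $\lambda_0\ge (M/D)\sqrt{d\ln(1/\eta)}/\eps$ collapse every term into the target order $(M+LD)D\,(1/\sqrt n+\sqrt{d\ln(1/\eta)}/(\eps n))$. For the stability price, Theorem~\ref{prop:stability_bound_w} gives $\delta\le K\,(2M/(n\lambda_0)+\sqrt{4\nu/\lambda_0})$; with $K=n$ and $\nu=M^2/(\lambda_0 n^2)$ this is $O(M/\lambda_0)$, so $M\delta=O(M^2/\lambda_0)=O(MD/\sqrt n)$ by $\lambda_0\ge (M/D)\sqrt n$, which is absorbed into the claimed bound. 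Combining these two estimates through the generalization proposition closes the risk claim.

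Finally, the \textbf{running time} follows from Remark~\ref{rem:prox_runtime}: each outer iteration solves its strongly monotone subproblem to accuracy $\nu$ with the Operator Extrapolation subroutine in $O(\log n)$ inner iterations (using $\lambda_0\ge L$ and the logarithmic dependence on $1/\nu$), each inner iteration costing $2B$ operator evaluations; over $K=n$ outer iterations this totals $O(KB\log n)=\wt{O}(n\sqrt n)$ evaluations. The step I expect to be the main obstacle is the simultaneous calibration of the single parameter $\lambda_0$: it must balance the empirical optimization error (favoring small $\lambda_0$), the injected-noise contribution (favoring large $\lambda_0$), and the stability/generalization error (favoring large $\lambda_0$) so that all three, across both the statistics-dominated ($\sqrt n$) and privacy-dominated ($\sqrt{d\ln(1/\eta)}/\eps$) regimes encoded in its two-sided maximum, land at exactly the same rate—and, more subtly, verifying that with-replacement subsampling simultaneously yields unbiased empirical-operator estimates needed for Theorem~\ref{thm:conv_prox_vi} and the amplified privacy guarantee of Theorem~\ref{thm:dp_prox_multipass}.
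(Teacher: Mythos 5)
Your proposal is correct and follows essentially the same route as the paper's proof: sensitivity bound plus the moments-accountant composition for privacy, the empirical-gap bound from Theorem~\ref{thm:conv_prox_vi} applied to the empirical distribution combined with the UAS bound of Theorem~\ref{prop:stability_bound_w} and the stability-implies-generalization propositions for the risk, and the OE subroutine count from Remark~\ref{rem:prox_runtime} for the running time. Your explicit verification of the noise-level and subsampling conditions in Theorem~\ref{thm:dp_prox_multipass} is slightly more detailed than the paper's, but the argument is the same.
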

\begin{proof}
	%\textcolor{red}{First let us look at the privacy. Note that $\nu$ satisfies the assumption in Proposition \ref{prop:l_2-sensitivity_prox}. Hence, $\ell_2$-sensitivity bound holds.  Since $p_i = \frac{B_i}{n} = \frac{1}{\sqrt{n}}$ and $K = n$, we have $\tsum_{i = 1}^K p_i^2 = 1$. Then, it can be easily seen that $\sigma_{k+1}$ satisfies \eqref{eq:int_rel_SPP_2} and \eqref{eq:int_rel_SPP_3}. Moreover, we have $q = \frac{2}{\theta}\log\frac{1}{\delta} = \Omega(\log{n}) \gg 1$ for $\delta \ll 1/n$, $p_i = \frac{1}{\sqrt{n}} \ll 1$ and $\frac{p_iq}{\sigma} = O(\sqrt{\frac{\log1/\delta}{n}}) \ll 1$ when $\log{1/\delta} = O(\log{n})$. All these assumptions are practical when $n$ is quite large.}
	
	%For convenience, we denote ${\cal A}=\AreplNISPP$. 
	Note that since $\nu$ satisfies assumption in Proposition \ref{prop:l_2-sensitivity_prox}, we have $\ell_2$-sensitivity of the update of $u_{k+1}$ is $\frac{4M}{\lambda_0B_{k+1}}$. Then, in view of Theorem \ref{thm:dp_prox_multipass} along with value of $\sigma_{k+1}$, we conclude that Algorithm~\ref{alg:alg0} is $(\priva, \privb)$-differential private. %\dnote{need to apply Theorem \ref{thm:dp_prox_multipass}!}

	Now, for convergence, we first bound the empirical gap. Given that our bounds for \eqref{main_problem} and \eqref{spp_problem} are analogous, we proceed indistinctively for both problems. By Theorem \ref{thm:conv_prox_vi}, along with the fact that sampling with replacement is an unbiased stochastic oracle for the empirical operator, we have for any $\bS$
	\begin{align*}
		\EE_{\A}[\mbox{EmpGap}(\A,F_\bS)] &\le \nu + \frac{\lambda_0D^2}{n} + \frac{7M^2}{\lambda_0} + \frac{160M^2d}{\eps^2B^2\lambda_0}\ln{\frac{1}{\eta}} + M\sqrt{2d}\frac{8M}{Bn \eps \lambda_0}\\
		&\le O\paran*{MD \paran*{\frac{1}{\sqrt{n}} + \frac{\sqrt{d \ln{1/\eta}}}{\eps n} 	} }. 
		%\frac{MD}{n^{5/2}} + \frac{MD}{\sqrt{n}} + \frac{MD\sqrt{d\log{1/\delta}}}{\theta n} + 
	\end{align*}
	Similar claims can be made for empirical gap for \eqref{spp_problem} problem: $\EE_{{\cal A}}\big[ \mbox{EmpGap}(\A, f_{\bS})\big]$ where output of $\A$ is $(x(\bS), y(\bS))$.
	
	Next, by Theorem \ref{prop:stability_bound_w}, we have that $\A(\bS)$ (or $x(\bS)$ and $y(\bS)$ for the SSP case) are UAS with parameter
	\[\delta = \frac{2M}{\lambda_0} + n\sqrt{\frac{4\nu}{\lambda_0}} = \frac{4M}{\lambda_0} \le \frac{4D}{\sqrt{n}}. \]
	Hence, noting that empirical risk upper bounds weak VI or SP gap, i.e., using Proposition \ref{prop:weak_generalization_SSP} or Proposition \ref{prop:weak_generalization_SVI} (depending on whether
	the problem is an SSP or SVI, respectively), we have that the risk is upper bounded by its empirical risk plus $M\delta$, where
	$\delta$ is the UAS parameter of the algorithm; in particular, if $\mbox{WeakGap}(\A;\bS)$ is the (SVI or SSP, respectively) gap function for the 
	expectation objective, then
	\begin{align*}
		\EE_{\A, \bS } \big[ \wVIgap{\A}{F}\big] \le O\Big(MD\Big(\frac{1}{\sqrt{n}} + \frac{\sqrt{d\ln(1/\eta)} } {\eps n}\Big)\Big) + \frac{20MD}{\sqrt{n}} = O\Big(MD\Big(\frac{1}{\sqrt{n}} + \frac{\sqrt{d\ln(1/\eta)}}{\eps n}\Big)\Big)
	\end{align*}
	Similar claim can be made for $\wSPgap{\A}{f}$.\\
	%Finally, note that all assumption in Theorem \ref{thm:conv_spp_empirical} are satisfied. Hence, using Theorem \ref{thm:conv_spp_empirical}, Remark \ref{rem:relating_gap_to_opt_err} and the standing assumption that $LD\approx M$, we have 
	Finally, we analyze the running time performance. As in Remark \ref{rem:prox_runtime} , number of OE method iterations for obtaining $\nu$-approximate solution is $O\big(\frac{L+\lambda_0}{\lambda_0}\ln\big(\frac{LD^2+MD+\lambda_0D^2}{\nu}\big)\big)$. Now note that $\frac{L+\lambda_0}{\lambda_0} \le \frac{\sqrt{n}+1}{\sqrt{n}} \le 2$ since $n \ge 1$. Moreover, in view of \eqref{eq:int_rel_prox_29}, we have  $\ln\big(\frac{LD^2+MD+\lambda_0D^2}{\nu}\big) \le \ln\paran[\big]{\frac{4\lambda_0D^2}{\nu}} \le \ln\paran[\big]{n^2\max\braces[\big]{1, \frac{L^2D^2}{M^2}}\max\braces[\big]{n, \frac{d\ln(1/\privb)}{\priva^2}}}$. % due to assumption on $n$. %note that condition number $\kappa$ of strongly monotone VI problem for inner loop is $\approx 1$ \cnote{This needs to be precise. Add assumptions, or comment on them if necessary}. Moreover, to compute $\nu$-solution, one requires $O(\kappa\log(\frac{1}{\nu})) = O(\log{n})$ inner iterations. 
	Each iteration of OE method costs $B$ stochastic operator evaluations and we run outer-loop of NISPP method $K$ times. Hence, total stochastic operator evaluations (after ignoring the $\ln$-term) $\wt{O}(KB) = \wt{O}(n\sqrt{n})$. %\cnote{Let's avoide tildes, and give the precise dependence, up to $O(1)$}.
	Hence, we conclude the proof.
\end{proof}

%To simplify the expressions that follow, let us choose $\nu=M^2/[\lambda n^2]$. Then the high-probability upper bound on uniform argument stability is given by $6MK/[\lambda n]$

\section{Lower Bounds and Optimality of our Algorithms} \label{sec:LowerBounds}

In this Section, we show the optimality of our obtained rates from Sections \ref{subsec:stab_NSEG} and \ref{sec:stab_prox}. The first observation is that, since DP-SCO corresponds to a DP-SSP problem where ${\cal Y}$ is a singleton, the complexity of DP-SSP is lower bounded
by $\Omega(MD \big( \frac{1}{\sqrt n}+\min\big\{1,\frac{\sqrt{d}}{\varepsilon n}\big\} \big)\big)$: this is a known lower bound for DP-SCO \cite{Bassily:2019}. \added{It is important to note as well that this reduction applies to the weak generalization gap, as defined in \eqref{eqn:weak_SP_gap}, as in the case ${\cal Y}=\{\bar{y}\}$ is a singleton:
 \begin{align*} 
\wSPgap{{\cal A}}{f}
 &=\mathbb{E}_{\cal A}[ \sup_{y\in {\cal Y}} \mathbb{E}_{\bS}[f(x(\bS),y)]-\inf_{x\in {\cal X}} \mathbb{E}_{\bS}[f(x, y(\bS))] ]\\
 &=  \mathbb{E}_{\cal A}\mathbb{E}_{\bS}[f(x(\bS),\bar{y})]-\inf_{x\in {\cal X}} f(x, \bar{y})\\
 &=  \mathbb{E}_{{\cal A},\bS}[f(x(\bS),\bar{y})-\inf_{x\in {\cal X}} f(x, \bar{y})],
 \end{align*}
 which is simply the expected optimality gap. Using this reduction, together with a lower bound for DP-SCO \cite{Bassily:2019}, we conclude that
 \begin{proposition}
 Let $n,d\in\NN$, $L,M,D,\varepsilon>0$ and $\delta=o(1/n)$. The class of problems DP-SSP with gradient operators within the class ${\cal M}_{\W}^1(L,M)$, and domain ${\cal W}$ containing an Euclidean ball of diameter $D/2$, satisfies the lower bound
$$ \Omega\Big(MD\Big( \frac{1}{\sqrt n}+\min\Big\{1,\frac{\sqrt{d}}{\varepsilon n}\Big\} \Big)\Big).$$
 \end{proposition}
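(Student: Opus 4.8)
The plan is to exploit the reduction already outlined above, which identifies DP-SCO as the special case of DP-SSP in which $\Y=\{\bar y\}$ is a singleton. As the computation preceding the statement shows, for such instances the weak saddle-point gap $\wSPgap{{\cal A}}{f}$ collapses exactly to the expected optimality gap $\EE_{{\cal A},\bS}[f(x(\bS),\bar y)-\inf_{x\in\X}f(x,\bar y)]$ of the underlying stochastic convex program. Consequently it suffices to exhibit a family of \emph{singleton-in-$y$} SSP instances that (i) lies within the operator class ${\cal M}_{\W}^1(L,M)$ over a domain ${\cal W}={\cal X}\times\{\bar y\}$ containing an Euclidean ball of diameter $D/2$, and (ii) already forces the claimed lower bound on the optimality gap against every $(\varepsilon,\delta)$-DP algorithm.

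First I would invoke the hard instances underlying the DP-SCO lower bound of \cite{Bassily:2019}, which established precisely the rate $\Omega(MD(1/\sqrt n+\min\{1,\sqrt d/(\varepsilon n)\}))$ for $M$-Lipschitz convex losses over a domain of diameter $D$, provided $\delta=o(1/n)$. The key point to verify is that these instances fit inside the more restrictive class ${\cal M}_{\W}^1(L,M)$, which additionally demands that the associated operator be $L$-Lipschitz (i.e.\ the loss be $L$-smooth). This is where a little care is needed, but no genuine difficulty arises: both the statistical term $1/\sqrt n$ and the privacy term $\sqrt d/(\varepsilon n)$ are witnessed by \emph{linear} instances $f_{\bbeta}(x,\bar y)=\langle \bbeta,x\rangle$ with $\bbeta$ drawn from a suitable product distribution on a scaled hypercube, restricted to a ball of radius $D/2$. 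For such a loss the gradient operator $F_{\bbeta}(x)=\bbeta$ is constant, hence trivially $0$-Lipschitz (so $L$-Lipschitz for any $L\ge 0$) and monotone, and it is $M$-bounded as soon as $\|\bbeta\|\le M$. Thus each $F_{\bbeta}\in{\cal M}_{\W}^1(L,M)$, and the domain condition is met by construction.

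Finally, given any $(\varepsilon,\delta)$-DP algorithm ${\cal A}:\Z^n\mapsto\X\times\{\bar y\}$ for this embedded problem, the map $\bS\mapsto x(\bS)$ is itself $(\varepsilon,\delta)$-DP by the post-processing property, and is therefore a legitimate DP-SCO algorithm on the hard family. Its expected optimality gap is then lower bounded by \cite{Bassily:2019}, and by the gap identity of the first paragraph this transfers verbatim to $\wSPgap{{\cal A}}{f}$, yielding the proposition. The main obstacle is thus not analytic but structural: confirming that the DP-SCO hard instances can be realized within the smooth, bounded, monotone operator class ${\cal M}_{\W}^1(L,M)$ with matching parameters $(M,L,D,d,\varepsilon,\delta)$ --- which the linearity of the standard constructions delivers for free.
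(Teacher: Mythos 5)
Your proposal is correct and follows essentially the same route as the paper: the paper likewise observes that for singleton $\Y$ the weak SSP gap collapses to the expected optimality gap and then invokes the DP-SCO lower bound of \cite{Bassily:2019}. Your additional checks — that the hard instances are linear, hence have constant (thus $L$-Lipschitz, $M$-bounded, monotone) gradient operators, and that $x(\bS)$ inherits privacy by post-processing — are sensible elaborations of details the paper leaves implicit.
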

Next, we study the case of DP-SVI. The situation is more subtle here. Our approach is to first prove a reduction from population weak VI gap to empirical strong VI gap for the case where operators are constant w.r.t.~$w$. In fact, it seems unlikely this reduction works for more general monotone operators, however this suffices for our purposes, as we will later prove a lower bound construction used for DP-ERM \cite{Bassily:2014} leads to a lower bound for strong VI gap with constant  operators.\\
The formal reduction to the empirical version of the problem is presented in the following lemma. Its proof follows closely the reduction from DP-SCO to DP-ERM from \cite{Bassily:2019}. Below, given a dataset $\bS\in {\cal Z}^n$, let ${\cal P}_{\bS}=\frac1n\sum_{\bbeta\in\bS}\delta_{\bbeta}$ be the empirical distribution associated with $\bS$.
\begin{lemma} \label{lem:red_empirical_pop_SVI}
Let ${\cal A}$ be an $(\priva/[4\log(1/\privb)], e^{-\priva}\privb/[8\log(1/\privb)])$-DP algorithm for SVI problems. Then there exists an $(\priva,\privb)$-DP algorithm ${\cal B}$ such that for any empirical VI problem with constant operators, 
\[\EmpVIgap{{\cal B}}{F_{\bS}} \leq \wVIgap{{\cal A}}{F_{{\cal P}_{\bS}}} \qquad (\forall \bS\in {\cal Z}^n).\] 
\end{lemma}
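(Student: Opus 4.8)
The plan is to realize ${\cal B}$ by \emph{resampling with replacement}: on input $\bS=(\bbeta_1,\dots,\bbeta_n)$, draw indices $j_1,\dots,j_n \sim \mbox{Unif}([n])$ independently, form $\hat\bS = (\bbeta_{j_1},\dots,\bbeta_{j_n})$, which is an i.i.d.\ sample from ${\cal P}_\bS$, and output ${\cal B}(\bS)={\cal A}(\hat\bS)$. This is exactly the population-to-empirical reduction of \cite{Bassily:2019} transported to the VI setting. There are two things to verify: first the accuracy inequality $\EmpVIgap{{\cal B}}{F_\bS}\le \wVIgap{{\cal A}}{F_{{\cal P}_\bS}}$, and second that ${\cal B}$ is $(\priva,\privb)$-DP whenever ${\cal A}$ is $(\priva/[4\log(1/\privb)],\,e^{-\priva}\privb/[8\log(1/\privb)])$-DP.

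For the accuracy part I would exploit that the operator is constant in $w$, writing $\bar F := F_\bS(w) = F_{{\cal P}_\bS}(w) = \frac1n\sum_{\bbeta\in\bS}F_\bbeta$, a fixed vector once $\bS$ is fixed. The crucial observation is that
\[\sup_{w\in\W}\inprod{\bar F}{{\cal A}(\hat\bS)-w} = \inprod{\bar F}{{\cal A}(\hat\bS)} + \sup_{w\in\W}\inprod{\bar F}{-w},\]
where the last term is deterministic, independent of both $\hat\bS$ and the internal coins of ${\cal A}$. Hence the supremum commutes with the expectation over $\hat\bS$, and taking expectations over ${\cal A}$ and $\hat\bS$ yields
\[\EmpVIgap{{\cal B}}{F_\bS} = \EE_{{\cal A},\hat\bS}\inprod{\bar F}{{\cal A}(\hat\bS)} + \sup_{w\in\W}\inprod{\bar F}{-w} = \wVIgap{{\cal A}}{F_{{\cal P}_\bS}},\]
so the claimed bound in fact holds with equality. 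This is precisely where constancy of $F$ is needed: for a genuinely $w$-dependent operator Jensen's inequality would only give the reverse direction $\EE_{\hat\bS}\sup_w\ge \sup_w\EE_{\hat\bS}$.

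For the privacy part I would run the standard subsampling-plus-group-privacy argument. Fix $\bS\simeq\bS'$ differing only at coordinate $i$, and couple the two resamples through the common index sequence $(j_1,\dots,j_n)$; then $\hat\bS$ and $\hat\bS'$ differ exactly in the $N := |\{t : j_t = i\}|$ positions where $i$ was drawn, and $N\sim \mbox{Bin}(n,1/n)$. With $\epsilon_0 = \priva/[4\log(1/\privb)]$ and $\delta_0 = e^{-\priva}\privb/[8\log(1/\privb)]$, group privacy gives, conditionally on the index sequence, $\PP_{\cal A}[{\cal A}(\hat\bS)\in E]\le e^{N\epsilon_0}\PP_{\cal A}[{\cal A}(\hat\bS')\in E] + Ne^{N\epsilon_0}\delta_0$ for every event $E$. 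Setting the threshold $k^\ast = 4\log(1/\privb)$ so that $k^\ast\epsilon_0 = \priva$, I would split the expectation over the index sequence according to $\{N\le k^\ast\}$ and $\{N>k^\ast\}$: on the first event $e^{N\epsilon_0}\le e^{\priva}$ and the additive term is at most $k^\ast e^{\priva}\delta_0 = \privb/2$; on the second event I bound the conditional probability crudely by $1$. Combining gives
\[\PP[{\cal B}(\bS)\in E] \le e^{\priva}\PP[{\cal B}(\bS')\in E] + \privb/2 + \PP[N>k^\ast],\]
and the binomial tail $\PP[N\ge k]\le \binom{n}{k}n^{-k}\le 1/k!$ makes $\PP[N>k^\ast]\le \privb/2$ for small enough $\privb$, establishing $(\priva,\privb)$-DP of ${\cal B}$.

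The main obstacle is the privacy accounting in the second part: one must control the random multiplicity $N$ of the differing datapoint in the resample, reconcile the group-privacy $\delta$-blowup (the $Ne^{N\epsilon_0}$ factor) against the prescribed $\delta_0$, and check that the binomial tail closes with the stated constants — this is exactly what forces the $4\log(1/\privb)$ degradation of $\priva$ and the $e^{-\priva}\privb/[8\log(1/\privb)]$ choice of the input $\delta_0$. The accuracy part, by contrast, is essentially immediate once the constancy of $F$ is used to pull the supremum through the expectation.
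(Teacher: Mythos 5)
Your proposal is correct and follows essentially the same route as the paper: the reduction is the same resampling of $n$ points i.i.d.\ from ${\cal P}_{\bS}$ followed by running ${\cal A}$, the accuracy argument identically uses constancy of the operator to commute the supremum with the expectation over the resample (yielding equality), and your privacy accounting via group privacy plus the binomial tail on the multiplicity of the differing point is exactly the argument the paper delegates to Appendix~C of \cite{Bassily:2019}.
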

\begin{proof}
Consider the algorithm ${\cal B}$ that does the following: first, it extracts a sample $\mathbf{T}\sim ({\cal P}_{\bS})^n$; next, executes ${\cal A}$ on $\mathbf{T}$; and finally, outputs ${\cal A}(\mathbf{T})$. We claim that this algorithm is $(\priva,\privb)$-DP w.r.t.~$\bS$, which follows easily by bounding the number of repeated examples with high probability, together with the group privacy property applied to ${\cal A}$ (for a more detailed proof, see Appendix C in \cite{Bassily:2019}). Now, given a constant operator $F_{\bbeta}(w)$, let $R(\bbeta)\in\mathbb{R}^d$ be its unique evaluation. Let now $R_{\bS}$ be the unique evaluation of $F_{\bS}$, and given a distribution ${\cal P}$ let $R_{\cal P}$ be the unique evaluation of $F_{{\cal P}}(w)=\mathbb{E}_{\bbeta\sim {\cal P}}[F_{\bbeta}(w)]$. \\
Noting that $\mathbb{E}_{\mathbf{T}}[R_{\mathbf{T}}]=R_{\bS},$ we have that
\begin{align*}
\EmpVIgap{{\cal B}(\bS)}{F_{\bS}}
&= \mathbb{E}_{\cal B}[\sup_{w\in {\cal W}} \langle R_{\bS},{\cal B}(\bS)-w \rangle]\\
&= \mathbb{E}_{{\cal A},\mathbf{T}}[ \langle R_{\bS},{\cal A}(\mathbf{T})\rangle- \inf_{w\in {\cal W}} \langle R_{\bS},w \rangle]\\
&=\mathbb{E}_{{\cal A}} \sup_{w\in {\cal W}} \mathbb{E}_{\mathbf{T}}[ \langle R_{\bS},{\cal A}(\mathbf{T})-w \rangle]\\
&= \wVIgap{{\cal A}}{F_{{\cal P}_{\bS}}},
\end{align*}
where third equality holds since the optimal choice of $w$ is independent of $\mathbf{T}$, and the
last equality holds by definition of the weak gap function and the fact that $\mathbf{T}\sim ({\cal P}_{\bS})^n$.
\end{proof}
 }

\iffalse
In this Section, we show the optimality of our obtained rates from Sections \ref{subsec:stab_NSEG} and \ref{sec:stab_prox}. 
%\ref{sec:stab_nseg} and Section \ref{sec:stab_prox}. 
The first straightforward observation is that, since DP-SCO reduces to DP-SSP, then the complexity of DP-SSP is lower bounded
by $\Omega(MD \big( \frac{1}{\sqrt n}+\min\big\{1,\frac{\sqrt{d}}{\varepsilon n}\big\} \big)\big)$: this is a known 
 lower bound for DP-SCO \cite{Bassily:2019}. Now we establish a lower bound on the risk of DP-SVI, which is also 
obtained from a DP-SCO problem.
\fi

%Here we provide a lower bound DP-SCO $\Omega(\frac{1}{\sqrt n}+\frac{\sqrt d}{\varepsilon n})$. This lower bound arises from the known lower bounds for DP-SCO, and the fact that SCO is a particular case of SVI.

%\begin{lemma} [\cite{Bassily:2014}] \label{lem:BassilyLB}
%Given $n,d\in\NN$, $\varepsilon>0$ and $\delta=o(1/n)$, there exists $M=\Omega(\min\{n,\sqrt{d}/\varepsilon\})$ such that for every $(\varepsilon,\delta)$-DP algorithm ${\cal A}$, there exists a dataset $(v_1,\ldots,v_n)\subseteq \{\pm1/\sqrt{d}\}^d$ with $\|\sum_{i\in[n]} v_i\|\in[M-1,M+1]$ such that w.p.~at least $1/3$
%$$ \Big\|{\cal A}(S)-\frac1n\sum_{i\in[n]} v_i\Big\| =\Omega\Big( \min\big\{1,\frac{\sqrt{d}}{\varepsilon n} \big\}\Big).$$
%\end{lemma}
%This Lemma gives a lower bound on the empirical performance of a DP algorithm, which we need to turn into an SVI gap lower bound. We do this in the following result.
\added{Next, we prove a lower bound for the empirical VI problem over constant operators.
\begin{proposition} \label{propos:LB_empirical_VI}
Let $n,d\in\NN$, $L,M,D,\varepsilon>0$ and $2^{-o(n)}\leq \delta \leq o(1/n)$. The class of DP empirical VI problems with constant operators within the class ${\cal M}_{\W}^1(L,M)$, 
and domain ${\cal W}$ containing an Euclidean ball of diameter $D/2$ satisfies the lower bound
$$ \Omega\Big(MD\Big( \min\Big\{1,\frac{\sqrt{d\log(1/\privb)}}{\varepsilon n}\Big\} \Big)\Big).$$
%$$ \Omega\Big(MD\Big( \frac{1}{\sqrt n}+\min\Big\{1,\frac{\sqrt{d}}{\varepsilon n}\Big\} \Big)\Big).$$
\end{proposition}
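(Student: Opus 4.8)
The plan is to reduce the empirical strong VI gap for constant operators to the excess empirical risk of a \emph{linear} ERM problem, and then to invoke the known DP-ERM lower bound, whose hard instances are already linear losses. First I would observe that a constant operator $F_{\bbeta}(w)\equiv R(\bbeta)\in\RR^d$ is trivially monotone and $0$-Lipschitz (hence lies in ${\cal M}_{\W}^1(L,M)$ as soon as $\|R(\bbeta)\|\le M$), and that it corresponds to the linear loss $\ell_{\bbeta}(w)=\langle R(\bbeta),w\rangle$, which is convex and $M$-Lipschitz. For such operators the empirical strong gap unfolds as
\[
\EmpVIgap{\A}{F_{\bS}}
=\EE_{\A}\Big[\big\langle R_{\bS},\A(\bS)\big\rangle\Big]-\min_{w\in\W}\big\langle R_{\bS},w\big\rangle,
\]
where $R_{\bS}=\tfrac1n\sum_{\bbeta\in\bS}R(\bbeta)$. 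This is exactly the expected excess empirical risk of minimizing $w\mapsto\tfrac1n\sum_i\ell_{\bbeta_i}(w)$ over $\W$, so proving the proposition amounts to exhibiting a family of $M$-bounded vectors $R(\cdot)$, over a domain containing a ball of diameter $D/2$, on which every $(\priva,\privb)$-DP algorithm incurs large excess linear-ERM risk.

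Second I would import the DP-ERM lower bound of Bassily, Smith and Thakurta \cite{Bassily:2014}, in the sharp form tracking the $\log(1/\privb)$ factor via fingerprinting codes \cite{Steinke:2016}. The crucial point is that the hard instance there is \emph{already} linear: the data are drawn from a product distribution over $\{\pm M/\sqrt d\}^d$, so each datum induces a constant operator with $\|R(\bbeta)\|=M$, and the feasible set is taken to be the Euclidean ball of radius $D/4$, which fits inside any $\W$ containing a ball of diameter $D/2$. The fingerprinting argument shows that, in the regime $2^{-o(n)}\le\privb\le o(1/n)$, any $(\priva,\privb)$-DP estimate of the empirical mean $R_{\bS}$ has $\ell_2$-error $\Omega(M\sqrt{d\log(1/\privb)}/(\priva n))$; pairing this with the linear objective converts the estimation error into excess risk of order $\Omega\big(MD\min\{1,\sqrt{d\log(1/\privb)}/(\priva n)\}\big)$, the $\min$ with $1$ reflecting that the excess linear risk saturates at the scale $MD$ set by the domain diameter.

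Finally I would verify that all scalings line up: the operator Lipschitz constant is $0\le L$, the bound is $M$ by the choice $\|R(\bbeta)\|=M$, and the diameter constraint is met since the ball of radius $D/4$ lies in $\W$; hence the hard family is contained in ${\cal M}_{\W}^1(L,M)$. The main obstacle is porting the DP-ERM construction so that (i) it uses only constant operators and (ii) the dependence on $M$, $D$ and $\log(1/\privb)$ is recovered exactly. In particular, one must confirm that the fingerprinting mean-estimation lower bound applies in the stated range of $\privb$, pass from the in-expectation-over-the-hard-distribution bound to a fixed worst-case dataset (by averaging), and check that projecting the DP estimate onto $\W$ does not reduce the linear excess risk below the claimed order. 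The reduction to linear ERM is the conceptual key; the remainder is careful bookkeeping of constants and verifying membership in ${\cal M}_{\W}^1(L,M)$.
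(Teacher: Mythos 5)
Your proposal is correct and follows essentially the same route as the paper: both reduce the strong empirical VI gap for constant operators to the excess empirical risk of a linear ERM problem over a ball, and then invoke the fingerprinting-based DP-ERM lower bounds of \cite{Bassily:2014} and \cite{Steinke:2016}, whose hard instances are already linear losses with data in $\{\pm 1/\sqrt d\}^d$ scaled by $M$. The only cosmetic difference is that you route the argument explicitly through private mean estimation of $R_{\bS}$, whereas the paper cites the excess-risk form of the same lower bound directly.
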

\begin{proof}
Consider the following empirical VI problem: $F_{\bbeta}(u)= M\bbeta$, $\W={\cal B}(0,D)$ and dataset $\bS$ with points contained in $\{-1/\sqrt d,+1/\sqrt d\}^d$. Notice that, since the operator in this case is constant, the VI gap coincides with the excess risk of the associated convex optimization problem 
$$ (P)~~~ \min_{u\in \W} \Big\langle\frac{M}{n}\sum_{i\in[n]} \bbeta_i, u\Big\rangle. $$
Indeed, for any $u\in {\cal W}$,
\begin{eqnarray*}
\EmpVIgap{u}{F_\bS} &=& \sup_{v\in {\cal B}(0,D)} \Big\langle \frac{M}{n} \sum_{i\in[n]} \bbeta_i, u- v\Big\rangle
= \Big\langle \frac{M}{n} \sum_{i\in[n]} \bbeta_i, u+ \frac{D\sum_i \bbeta_i}{\|\sum_i \bbeta_i\|}\Big\rangle \\ 
&=& \Big\|\frac{MD}{n}\sum_{i\in[n]} \bbeta_i\Big\|+MD\big\langle \frac{u}{D}, \frac{1}{n}\sum_i \bbeta_i\big\rangle.
%\Big\langle u, \frac{MD\sum_i \bbeta_i}{\|\sum_i \bbeta_i\|}\Big\rangle.
\end{eqnarray*}
This, together with the lower bounds on excess risk proved for this problem in \cite[Appendix C]{Bassily:2014} and \cite[Theorem 5.1]{Steinke:2016} %\cite[Appendix C]{Bassily:2019}, 
show that any $(\priva,\privb)$-DP algorithm for this problem must incur in worst-case VI gap $\Omega(MD\min\{1,\frac{\sqrt{d\log(1/\privb)}}{\priva n}\})$,
which proves the result. %Combining the nonprivate lower bound with the DP lower bound, we get the result.
%On the other hand, Lemma \ref{lem:BassilyLB} shows that if ${\cal A}$ is also $(\varepsilon,\delta)$-DP, then there exists a dataset $(v_1,\ldots,v_n)$ such that w.p.~at least 1/3, $\|{\cal A}(S)-\frac1n\sum_i v_i\|=\Omega(1,\frac{\sqrt{d}}{\varepsilon n})$. 
%If our algorithm gets a sample $\bS=(\xi_1,\ldots,\xi_n)\stackrel{i.i.d.}{\sim}{\cal D}$, we have by standard concentration bounds that with very high probability, $\|\frac1n\sum_{i\in[n]}\xi_i-\frac1n\sum_{i\in[n]}v_i\|\leq \frac{c_1}{\sqrt n}.$ Therefore, with very high probability:
%\begin{eqnarray*}
%\VIgap({\cal A}(S);F_{\cal D},\U)
%&=& \sup_{u\in\U} \langle F_{\cal D}(u),u-{\cal A}(S) \rangle
%\,\geq\, \sup_{u\in\U} \langle F_{\bS}(u),u-{\cal A}(S)\rangle+
%   \langle F_{\cal D}(u)-F_{\bS}(u),u-{\cal A}(S)\rangle\\
%&\geq& \sup_{u\in\U} \langle F_{\bS}(u),u-{\cal A}(S)\rangle-\frac{Lc_1}{\sqrt n} \,=\, \VIgap({\cal A}(\bS);F_{\bS},\U)-\frac{Lc_1}{\sqrt n},
%\end{eqnarray*}
%where at the last inequality we used the aforementioned concentration bound. Now, by Lemma \ref{lem:BassilyLB}, we have that
%\begin{eqnarray*}
%\VIgap({\cal A}(\bS);F_{\bS},\U)= \sup_{u\in\U} \langle F_{\bS}(u),u-{\cal A}(S)\rangle 
%=\max_{u\in{\cal B}(0,D)} \langle \frac{1}{n}\sum_{i=1}^n  \rangle
%\end{eqnarray*}
\end{proof}
The two results above provide the claimed lower bound for the weak SVI gap of any differentially private algorithm.
\begin{theorem}
Let $n,d\in\NN$, $L,M,D,\varepsilon>0$ and $2^{-o(n)}\leq \delta \leq o(1/n)$. The class of DP-SVI problems with 
operators within the class ${\cal M}_{\W}^1(L,M)$, 
and domain ${\cal W}$ containing an Euclidean ball of diameter $D/2$ satisfies a lower bound for the weak VI gap 
$$ \tilde\Omega\Big(MD\Big( \frac{1}{\sqrt n}+\min\Big\{1,\frac{\sqrt{d}}{\varepsilon n}\Big\} \Big)\Big).$$
\end{theorem}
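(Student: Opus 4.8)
The plan is to establish the two summands of the lower bound separately, using \emph{constant} operators $F_{\bbeta}(w)=M\bbeta$ as the hard instances in both cases, and then to combine them through the elementary inequality $\max\{a,b\}\geq (a+b)/2$. Both constructions lie in ${\cal M}_{\W}^1(L,M)$ (a constant operator is trivially monotone and $L$-Lipschitz, and taking $\bbeta\in\{-1/\sqrt d,+1/\sqrt d\}^d$ makes it $M$-bounded), and both use a domain containing a ball of diameter $D/2$, so the worst-case weak VI gap over the class is at least the maximum of the two individual lower bounds. Throughout, the $\tilde\Omega$ notation absorbs polylogarithmic factors in $1/\delta$ coming from the privacy-parameter degradation in Lemma \ref{lem:red_empirical_pop_SVI}, and the hypothesis $2^{-o(n)}\leq\delta\leq o(1/n)$ is precisely what keeps these factors benign.

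For the private term I would begin from an arbitrary $(\varepsilon,\delta)$-DP algorithm ${\cal A}$ for SVI and reparametrize its budget: choose $\tilde\varepsilon,\tilde\delta$ with $\varepsilon=\tilde\varepsilon/[4\log(1/\tilde\delta)]$ and $\delta=e^{-\tilde\varepsilon}\tilde\delta/[8\log(1/\tilde\delta)]$, so that ${\cal A}$ meets the hypothesis of Lemma \ref{lem:red_empirical_pop_SVI} with parameters $(\tilde\varepsilon,\tilde\delta)$. The lemma then yields an $(\tilde\varepsilon,\tilde\delta)$-DP algorithm ${\cal B}$ for the empirical problem satisfying $\EmpVIgap{{\cal B}}{F_{\bS}}\leq\wVIgap{{\cal A}}{F_{{\cal P}_{\bS}}}$ for every constant-operator dataset $\bS$. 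Applying Proposition \ref{propos:LB_empirical_VI} on the worst-case hypercube instance lower bounds the left-hand side by $\Omega(MD\min\{1,\sqrt{d\log(1/\tilde\delta)}/[\tilde\varepsilon n]\})$; substituting back $\tilde\varepsilon=4\varepsilon\log(1/\tilde\delta)$ and absorbing the resulting $\sqrt{\log(1/\tilde\delta)}$ factors into $\tilde\Omega$ gives $\wVIgap{{\cal A}}{F}=\tilde\Omega(MD\min\{1,\sqrt d/[\varepsilon n]\})$.

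For the statistical term I would use the same constant operator $F_{\bbeta}(w)=M\bbeta$ and invoke the identity, already derived inside the proof of Lemma \ref{lem:red_empirical_pop_SVI}, that for constant operators the weak VI gap \emph{equals} the excess risk of the stochastic linear minimization $\min_{w\in{\cal W}}\langle M\mu,w\rangle$ with $\mu=\mathbb{E}_{\bbeta}[\bbeta]$: the supremum defining the gap decouples from both the data and the internal randomness, yielding $\wVIgap{{\cal A}}{F}=\mathbb{E}_{{\cal A},\bS}[\langle M\mu,{\cal A}(\bS)\rangle]+\tfrac{MD}{2}\|\mu\|$. Restricting to a one-dimensional sub-problem inside ${\cal W}$ with a two-point family of product distributions whose mean $\mu$ has magnitude $\Theta(1/\sqrt n)$ and unknown sign, a standard Le Cam argument shows that no estimator—private or not—can recover $\operatorname{sign}(\mu)$ from $n$ samples reliably, forcing gap $\Omega(MD/\sqrt n)$. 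This bound is information-theoretic and therefore holds for every $(\varepsilon,\delta)$-DP algorithm a fortiori.

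Combining the two displays via $\max\{a,b\}\geq(a+b)/2$ then produces the claimed $\tilde\Omega(MD(1/\sqrt n+\min\{1,\sqrt d/[\varepsilon n]\}))$. I expect the main obstacle to be the privacy-parameter bookkeeping of the first step: the reduction of Lemma \ref{lem:red_empirical_pop_SVI} trades a factor of roughly $\log(1/\delta)$ between the $\varepsilon$-parameter and the failure probability, and one must verify, using $2^{-o(n)}\leq\delta\leq o(1/n)$, that a valid $(\tilde\varepsilon,\tilde\delta)$ with $\tilde\varepsilon=\tilde\Theta(\varepsilon)$ exists and that, after resubstitution, the empirical lower bound is degraded only by polylogarithmic factors—exactly those hidden inside $\tilde\Omega$. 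The statistical term, by contrast, is routine once the constant-operator identity reduces it to a classical minimax estimation bound.
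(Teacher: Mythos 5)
Your proposal is correct and follows the paper's proof in its essential structure: both arguments decompose the bound into a nonprivate statistical term and a private term, and both obtain the private term by composing Lemma \ref{lem:red_empirical_pop_SVI} (the weak-population-to-strong-empirical reduction for constant operators) with Proposition \ref{propos:LB_empirical_VI} (the DP-ERM-style lower bound on the hypercube instance), absorbing the $\log(1/\delta)$ degradation of the privacy parameters into the $\tilde\Omega$. The only substantive divergence is in the $MD/\sqrt n$ term: the paper simply cites the classical nonprivate SVI lower bounds of Nemirovsky--Yudin and Juditsky et al., whereas you re-derive it from scratch by observing that for constant operators the supremum in the weak gap decouples, so the weak VI gap coincides with the excess risk of a stochastic linear minimization, to which a two-point Le Cam argument applies. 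Your route is slightly longer but buys something real: since the weak gap is dominated by the strong gap, a lower bound stated for the strong gap does not automatically transfer to the weak gap, and it is precisely the constant-operator identity you invoke that justifies the transfer. The paper leaves this point implicit; your version makes it explicit. Both arguments then conclude identically by taking the maximum of the two terms.
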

Before we prove the result, let us observe that the presented lower bound shows the optimality of our algorithms in the 
range where $M\geq LD$. Obtaining a matching lower bound for any choice of $M,L,D$ is an interesting question, which
unfortunately our proof technique does not address: this is a limitation that the lower bound is based on constant operators, and therefore their Lipschitz constants are always zero. 
\begin{proof}
Let ${\cal A}$ be any algorithm for SVI. By the classical (nonprivate) lower bounds for SVI \cite{Nemirovsky:1983,Juditsky:2011}, we have that the minimax SVI gap attainable is lower bounded by $\Omega(MD/\sqrt n)$. On the other hand, using Lemma \ref{lem:red_empirical_pop_SVI} the accuracy of any $(\priva,\privb)$-DP algorithm for weak SVI gap is lower bounded by the strong gap achieved by $(4\priva \ln(1/\privb),e^{\priva}\tilde O(\privb))$-DP algorithms on empirical VI problems with constant operators. Finally, by Proposition \ref{propos:LB_empirical_VI}, the latter class of problems enjoys a lower bound $\Omega(\min\{1,\sqrt{d\ln(1/[e^{\priva}\tilde O(\privb)])}/[\varepsilon  n\ln(1/\privb)]\})=\tilde\Omega(\min\{1,\sqrt{d}/[n\varepsilon])$, which implies a lower bound on the former class of this order. We conclude by combining both the private and nonprivate
lower bounds established above.
\end{proof}
}

\section{Acknowledgements}
CG would like to thank Roberto Cominetti for valuable discussions on stochastic variational inequalities and nonexpansive iterations. CG's research is partially supported by INRIA through the INRIA Associate Teams project, ANID -- Millenium Science Initiative Program -- NCN17\_059, and FONDECYT 1210362 project. We would also like to thank the authors of \cite{Yang:2022} for pointing out a gap in a proof of an earlier version of this paper.

\bibliographystyle{plain}
\bibliography{bibliography}

\appendix
\section{Proof of Proposition \ref{prop:weak_generalization_SVI}}
Let $\bS' = (\bbeta'_1, \dots, \bbeta'_n)$ be independent of $\bS$. For $i \in [n]$, we denote $\bS^{i} = (\bbeta_1, \dots, \bbeta_{i-1}, \bbeta'_{i},  \bbeta_{i+1}, \dots, \bbeta_n)$. Then, for any $w \in \W$, we have
\begin{align}
  \EE_{\bS}\inprod{F(w)}{\A(\bS) - w} &= \EE_{\bS, \bS'}\frac{1}{n}\sum_{i = 1}^n \inprod{F_{\bbeta'_{i}}(w)}{\A(\bS)-w} \nonumber\\
  &=\EE_{\bS, \bS'}\frac{1}{n}\sum_{i = 1}^n \inprod{F_{\bbeta_{i}}(w)}{\A(\bS^{i})-w} \nonumber\\
  &=\EE_{\bS, \bS'}\frac{1}{n}\sum_{i = 1}^n \inprod{F_{\bbeta_{i}}(w)}{\A(\bS)-w} + \inprod{F_{\bbeta_{i}}(w)}{\A(\bS^{i}) - \A(\bS)} \nonumber\\
  &\le \EE_{\bS, \bS'}\frac{1}{n}\sum_{i = 1}^n \inprod{F_{\bbeta_{i}}(w)}{\A(\bS)-w} + \gnorm{F_{\bbeta_{i}}(w)}{}{}
  \gnorm{\A(\bS^{i}) - \A(\bS)}{}{} \nonumber\\
  &\le \EE_{\bS, \bS'}\frac{1}{n}\sum_{i = 1}^n \inprod{F_{\bbeta_{i}}(w)}{\A(\bS)-w} + M\gnorm{\A(\bS^{i}) - \A(\bS)}{}{} \nonumber \\%\quad\text{a.s.} 
  &=\EE_{\bS }\inprod{F_\bS(w)}{\A(\bS) - w} + \EE_{\bS, \bS'} \frac{1}{n}\sum_{i = 1}^n M\gnorm{\A(\bS^{i}) - \A(\bS)}{}{}
  \label{eq:a.s_weak_bound}
\end{align}
%Let $\wh{\W} \subset \W$ be countable and dense subset (notice that it exists by compactness of $\W$). Moreover, $\EE_{\bS}\inprod{F(w)}{\A(\bS) - w}$ and $\EE_{\bS}\frac{1}{n}\sum_{i = 1}^n \inprod{F(w; \bbeta_{i})}{\A(\bS)-w}$ are continuous functions in $w$. Then, it implies that 
%\begin{align*}
%	\sup_{w \in \W} \inprod{F(w)}{\A(\bS) - w} &= \sup_{w \in \wh{\W}} \inprod{F(w)}{\A(\bS) - w}\\
%	\sup_{w \in \W} \EE_{\bS}\frac{1}{n}\sum_{i = 1}^n \inprod{F(w; \bbeta_{i})}{\A(\bS)-w} &= \sup_{w \in \wh{\W}} \EE_{\bS}\frac{1}{n}\sum_{i = 1}^n \inprod{F(w; \bbeta_{i})}{\A(\bS)-w} 
%\end{align*}
%Since $\wh{\W}$ is a countable set, we can take countable supremum over \eqref{eq:a.s_weak_bound} and then noting the above relations, we have
%\begin{equation*}
%	\sup_{w \in \W}\EE_{\bS}\inprod{F(w)}{\A(\bS) - w} \le \sup_{w \in \W}\EE_{\bS}\Big[\frac{1}{n}\sum_{i = 1}^n \inprod{F(w; \bbeta_{i})}{\A(\bS)-w} \Big]
%	+ \EE_{\bS, \bS'} \Big[\frac{1}{n}\sum_{i = 1}^nM\gnorm{\A(\bS^{i}) - \A(\bS)}{}{}\Big]\quad\text{a.s.}
%\end{equation*}
Now, taking supremum over $w \in \W$ and taking expectation over $\A$ which is $\delta$-UAS, we have,
\begin{align*}
	\EE_{{\cal A}}[\wVIgap{\A(\bS)}{F}] &\le \EE_{\A}[\sup_{w \in \W}\EE_{\bS}\inprod{F_{\bS}(w) }{\A(\bS) -w}] + \EE_{\bS, \bS'}\frac{1}{n}\sum_{i = 1}^n M \EE_{\A}\gnorm{\A(\bS^i) - \A(\bS)}{}{}\\
	&\le\EE_{\A,\bS}[\sup_{w \in \W}\inprod{F_{\bS}(w) }{\A(\bS) -w}] + \EE_{\bS, \bS'}\frac{1}{n}\sum_{i = 1}^n M \EE_{\A}\gnorm{\A(\bS^i) - \A(\bS)}{}{}\\
	&\le \EE_{\bS}[\EmpVIgap{\A}{F_{\bS}}] + M\delta.
\end{align*}

\section{Operator Extrapolation method \cite{Kotsalis:2020}}
\label{sec:OE_method}
Suppose we want to solve VI problem associated with operator $F_k(\cdot) = F(\cdot)+ \lambda_k(\cdot-w_k)$ whose (unique) solution be $w_{k+1}^*$. It is clear that $F_k$ is an $(L+\lambda_k)$-Lipschitz continuous operator which is $\lambda_k$-strongly monotone as well. Denote $\kappa := \frac{\lambda_k}{L+\lambda_k} + 1$ and consider the following algorithm for solving this problem:
\begin{algorithm}[H]
	\caption{Operator Extrapolation (OE) method}
	\label{alg:alg2}
	\begin{algorithmic}[1]
		\STATE Let $z_0 = z_1 = w_k$ be given.
		\FOR {$t = 1, \dots, T$}
			\STATE $z_{t+1} = \argmin_{w \in \W} \frac{1}{2(L+\lambda_k)} \inprod{F_k(z_t) + \frac{1}{\kappa}\big[F_k(z_t)-F_k(z_{t-1}) \big] }{w} + \frac{1}{2}\gnorm{w-z_t}{}{2}$
		\ENDFOR
	\end{algorithmic}
\end{algorithm}
We have the following convergence guarantee for this algorithm:	$$\gnorm{z_{T}-w^*_{k+1}}{}{2} \le \kappa^{-T}\gnorm{z_1 - w^*_{k+1}}{}{2}$$
In particular, in order to ensue that $\gnorm{z_T-w_{k+1}^*}{}{} \le \frac{\nu}{LD^2+MD+ 2\lambda_kD^2}$, we require $$T = 2\kappa \ln\Big(\frac{LD^2+MD+2\lambda_kD^2}{\nu}\Big) = \frac{2(L+2\lambda_k)}{L+\lambda_k}\ln\Big(\frac{LD^2+MD+2\lambda_kD^2}{\nu}\Big)$$ itearations.
\end{document}